\documentclass[12pt,a4paper,twoside]{amsart}
\usepackage{amsmath,amssymb, MnSymbol, amsthm, stmaryrd, MnSymbol, hyperref, amsfonts, bbding, pifont}
\usepackage{rotating}
\usepackage{graphicx}
\usepackage{xcolor,color}
\usepackage{enumerate}
\usepackage[utf8]{inputenc}
\usepackage[textsize=small]{todonotes}
\makeatletter
\newcommand{\namedlabel}[2]{%
  \def\@currentlabel{#1}%
  \label{#2}%
}
\makeatother
\newtheorem{thm}{Theorem}[section]
\newtheorem{cor}[thm]{Corollary}
\newtheorem{prop}[thm]{Proposition}
\newtheorem{lem}[thm]{Lemma}
\newtheorem{fact}[thm]{Fact}

\newtheorem{question}[thm]{Question}
\newtheorem*{prop*}{Proposition}
\newtheorem*{thm*}{Theorem}

\theoremstyle{definition}
\newtheorem{dfn}[thm]{Definition}
\newtheorem{exm}[thm]{Example}

\newcommand{\lbr}{\llbracket}
\newcommand{\rbr}{\rrbracket}

\newcommand\zfc{{\mathsf{ZFC}}}
\newcommand\zf{{\mathsf{ZF}}}

\usepackage{todonotes}

\newcommand\gl{\mathsf{GL}}
\newcommand\lang{\mathcal{L}}
\newcommand{\tup}[1]{\langle #1\rangle}
\newcommand{\set}[1]{\{ #1\}}

\newcommand{\inter}[1]{\llbracket#1\rrbracket}

\newcommand{\fil}{\mathcal F}

\newcommand{\model}{\mathfrak M}

\newcommand\Suc{\mathrm{Suc}}
\renewcommand\root{\mathrm{root}}
\newcommand\cof{\mathrm{cf}}

\title{Strong Completeness of Provability Logic for Uncountable Languages}

\author[M.  Golshani]{Mohammad Golshani}

\address{Mohammad Golshani, School of Mathematics, Institute for Research in Fundamental Sciences (IPM), P.O.\ Box:
	19395--5746, Tehran, Iran.}

\email{golshani.m@gmail.com}
\author[G. Stepanov]{Grigorii Stepanov}

\address{Grigorii Stepanov,
Institut f\"ur diskrete Mathematik und Geometrie, TU Wien. Wiedner Hauptstrasse 8-10, 1040 Vienna, Austria.}
\email{grigorii.stepanov@tuwien.ac.at}

\author[R. Zoghifard]{Reihane Zoghifard}

\address{Reihane Zoghifard, School of Mathematics, Institute for Research in Fundamental Sciences (IPM), P.O.\ Box:
	19395--5746, Tehran, Iran.}

\email{r.zoghi@gmail.com}

\begin{document}

\subjclass[2010]{Primary: 03F45, Secondary: 03E10, 03F15, 54G12}

\keywords {G\"odel-L\"ob provability logic; Uncountable language; Strong completeness; Topological semantics; Erd\H{o}s--Rado theorem}
	\maketitle
	
\begin{abstract}
	For an ordinal $\lambda>0$, we use the Erd\H{o}s--Rado partition theorem to prove the failure of strong completeness of $\gl$ for modal languages of cardinality  $(2^{|\lambda|+\aleph_0})^{+}$ with respect to models on ordinals equipped with the generalized Icard topologies $\mathcal{I}_{\lambda}$ and ${\tau_{c}}_{+\lambda}$. Specifically, we show that for such languages there exists a $\gl$-consistent set of formulas having neither $(\Theta, \mathcal{I}_{\lambda})$-model nor $(\Theta, {\tau_{c}}_{+\lambda})$-model.
We also introduce two kinds of natural classes of topological spaces, called \emph{ $\lambda$-bouquet spaces} and \emph{ultralinear $\lambda$-bouquet spaces}, and prove that they yield strong completeness of $\mathsf{GL}$ and $\mathsf{GL}.3$ respectively for languages of  cardinality $\lambda$.
\end{abstract}

\section{Introduction}

The G\"odel-L\"ob provability logic ($\mathsf{GL}$) is a propositional modal logic in which the modal operator $\Box$ is interpreted as a provability predicate within any formal theory $T$
capable of describing the arithmetic of natural numbers, such as Peano Arithmetic. A formula $\Box\varphi$ is read as ``$\varphi$ is provable in $T$," while $\lozenge\varphi$ is interpreted as ``$\varphi$ is consistent with $T$." Solovay showed in \cite{solovay}, that $\gl$ is complete for such interpretation. In the same paper he considered the interpretation of $\Box\varphi$ as ``$\varphi$ holds in every transitive model of $\zfc$'' and provided a completeness result for the extension $\gl.3$ of $\gl$ with respect to this interpretation.

The Kripke semantics of $\gl$ has been extensively studied. Segerberg \cite{segerberg} established that $\mathsf{GL}$ is sound and complete with respect to the class of all transitive and converse well-founded Kripke frames. In fact, it suffices to restrict attention to finite transitive irreflexive trees. Similarly, $\gl.3$ is complete with respect to finite strict linear orders. However, both $\mathsf{GL}$ and $\gl.3$ fail to be strongly complete with respect to their Kripke semantics.

Subsequently, Esakia \cite{esakia} observed that the modal operator $\lozenge$ behaves analogously to the derivative operator in topological scattered spaces. He then proved that $\mathsf{GL}$ is strongly complete with respect to the class of all scattered spaces.

In 1990, Blass \cite{blass}  extended Esakia's work by interpreting modal operators via filters associated with uncountable cardinals, adopting a set-theoretic perspective. He investigated the completeness of $\mathsf{GL}$ with respect to end-segment filters and closed unbounded (club) filters. Within ZFC, Blass proved completeness for end-segment filters, a  result that also establishes completeness of $\mathsf{GL}$ for all ordinals $\alpha\geq \omega^\omega$ under the interval topology (independently shown by Abashidze \cite{abashidze1985ordinal}). For club filters, Blass demonstrated completeness under G\"odel's axiom of constructibility, specifically Jensen's square principle $\Box_\kappa$ for all uncountable cardinals $\kappa<\aleph_\omega$. Furthermore, he showed that the failure of completeness for club filters is equiconsistent with the existence of a Mahlo cardinal. Following the approach of Blass, the first and the third author showed consistency of the completeness of $\gl$ for the normal topology \cite{GZ}, whereas Aguilera and the second author showed that it is consistent that $\gl.3$ is complete for the normal topology and that the Axiom of Determinacy implies completeness of $\gl.3$ for the club topology \cite{AgSt24}.

The Abashidze--Blass theorem initiated a new line of research into the
completeness of $\mathsf{GL}$ and also its polymodal extensions $\mathsf{GLP}$ with
respect to the natural topologies on ordinals (see, e.g., \cite{bagaria2019, beklemishev2013,aguilera2023topological}). The polymodal version of $\gl.3$, namely $\mathsf{GLP}.3$, was recently shown to be the logic of correct submodels in the setting of $V=L$ \cite{AgPa}.

By generalizing the notion of Icard topology introduced in \cite{icard09} for studying polymodal provability logic $\mathsf{GLP}$, \cite{aguilera2017strong} showed that by starting from a scattered space $\mathfrak{X}=(X,\tau)$ one can obtain a class of spaces  $\mathfrak{X}_{+\lambda}=(X,\tau_{+\lambda})$ such that $\mathsf{GL}$ is strongly complete with respect to $\mathfrak{X}_{+\lambda}$ for each $\lambda <\Lambda$ where $\Lambda$ depends on $\mathfrak{X}$.
More specifically, $\mathsf{GL}$ is strongly complete with respect to an ordinal $(\Theta,\mathcal{I}_\lambda)$ for $e^\lambda\omega<\Theta$ \cite[Corollary 6.14]{aguilera2017strong} and $(\Theta,{\tau_c}_{+\lambda})$ for $\aleph_{e^\lambda\omega+1}<\Theta$ \cite[Corollary 6.15]{aguilera2017strong}. As a special case, when $\lambda=1$, we have strengthening of Abashidze--Blass theorem.

However, the method used in \cite{aguilera2017strong} relies heavily on the fact that the set of propositional variables is countable, and they raised the question of whether for any set of propositional variables $\mathbb{P}$ with $|\mathbb{P}|\geq\aleph_1$, there is a natural topological space $\mathfrak{X}$ with respect to which $\mathsf{GL}(\mathbb{P})$ is strongly complete.

In this paper we study the strong completeness of $\mathsf{GL}$ in the setting of uncountable languages. We first show that strong completeness
 of $\mathsf{GL}(\mathbb{P})$ fails for generalized Icard topology on ordinals, by establishing the following theorem.

\begin{thm}\label{incom uncount1}
	Let $\lambda$ be any ordinal and $\kappa = |\lambda| +\aleph_0$.
	Let $\mathbb{P}=\{p_i \; : \; i<  (2^\kappa)^+   \}$ be a set of propositional variables with $|\mathbb{P}|= (2^\kappa)^+$.
	Let \[ \Gamma = \{ \Diamond p_i \; : \; i < (2^\kappa)^+ \} \cup \{ \Box(p_i \to \Diamond p_j) \;  : \; i < j < (2^\kappa)^+ \}. \]
	Then $\Gamma$  is not satisfiable on any $(\Theta,\mathcal{I}_\lambda)$ or $(\Theta, {\tau_c}_{+\lambda})$.
\end{thm}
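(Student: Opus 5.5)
Suppose toward a contradiction that $\Gamma$ holds at some point $x_0$ of a model on $(\Theta,\mathcal{I}_\lambda)$ or $(\Theta,{\tau_c}_{+\lambda})$. Write $V_i=\inter{p_i}$. Since $x_0\models\Diamond p_i$, every neighbourhood of $x_0$ meets $V_i\setminus\{x_0\}$. The formulas $\Box(p_i\to\Diamond p_j)$ give a punctured neighbourhood $U_{ij}$ of $x_0$ such that every point of $V_i\cap U_{ij}$ is a limit point of $V_j$. The aim is to find $i<j<k$ and a single point $y\in V_i$ close to $x_0$ with $y\models \Diamond p_j$, $y\models\Diamond p_k$ and \dots, iterated until we obtain an infinite strictly descending sequence of ordinals, which is impossible.

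\textbf{Step 1 (colouring by neighbourhood codes).} In both topologies, the point $x_0$ has a neighbourhood base that can be described by at most $2^\kappa$ parameters. For $\mathcal{I}_\lambda$, basic neighbourhoods of $x_0$ are determined by an ordinal below $x_0$ together with finitely many rank conditions $\ell_\xi$ for $\xi<\lambda$. For ${\tau_c}_{+\lambda}$, they are determined similarly, except that the relevant sets of "derived levels" are indexed by $\lambda$. The difficulty is that a lower endpoint ranges over $x_0$, which may be large. So I will not colour by the endpoint itself; instead I will use cofinality and order properties.

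Concretely, for each pair $i<j$, take the least "type" $t(i,j)$ of a basic neighbourhood witnessing $U_{ij}$. The type is the pattern of $\lambda$-indexed conditions, with the endpoint forgotten, and there are at most $2^\kappa$ types. By Erdős–Rado, $(2^\kappa)^+\to(\kappa^+)^2_{2^\kappa}$, so we obtain a homogeneous set $H$ of order type $\kappa^+$ on which the type is constant. The endpoints $\beta(i,j)<x_0$ remain. Along $H$ we consider $\sup$ of the endpoints, taken inside a countable or $\omega$-length subsequence $i_0<i_1<\dots$. Since the intervals are end-segments below $x_0$, their intersection over a countable set is still a neighbourhood provided $\cof(x_0)>\omega$. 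The case $\cof(x_0)=\omega$ needs a separate argument, choosing the subsequence so that one endpoint bound works uniformly. This is where the homogeneity is used: a fixed $i$ together with all $j>i$ from $H$ handles endpoints growing along the sequence.

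\textbf{Step 2 (descending chain).} Using the homogeneous type, build points $x_0>y_1>y_2>\cdots$ with $y_n\in V_{i_n}\cap U$, where $U$ is a common neighbourhood of the fixed type. Since $y_n\in V_{i_n}\cap U_{i_n i_{n+1}}$, the point $y_n$ is a limit of $V_{i_{n+1}}$. Hence we can pick $y_{n+1}<y_n$ in $V_{i_{n+1}}$. We also need $y_{n+1}\in U_{i_{n+1}i_{n+2}}$. Because $U$ has the homogeneous type and $y_n$ is a limit point in the finer topology, the neighbourhood of $y_n$ of that type (intersected with $U$) is admissible. So we can stay inside it, which is the generalized Icard property: rank conditions are inherited by sufficiently close points below. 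This produces an infinite descending sequence of ordinals, a contradiction.

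The main obstacle is Step 1. The point is to show that the neighbourhood witnesses can be coded with only $2^\kappa$ colours in a way that allows a uniform common neighbourhood. The natural colouring compares endpoints: colour $\{i,j\}$ by the type together with the information of whether the witness at $x_0$ depends on some level $<\lambda$. If the endpoint issue cannot be absorbed, I would instead colour triples, use Erdős–Rado $\beth_2(\kappa)^+\to$, and then lower the exponent by replacing endpoints with the order-pattern of the $\lambda$-many rank functions, which only has $2^\kappa$ possibilities.
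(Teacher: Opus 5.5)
\textbf{There is a genuine gap: the small-cofinality case is not handled.} Your architecture matches the paper's. You colour each pair $i<j$ by data of a basic neighbourhood witnessing $\Box(p_i\to\Diamond p_j)$, apply Erd\H{o}s--Rado, extract one neighbourhood $O$ of $x_0$ that serves all pairs from an $\omega$-sequence in the homogeneous set, and then descend. Your ``forget the endpoints, take sups over countably many pairs'' is exactly the paper's large-cofinality case. The other case you defer, and your remark about it does not work.

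First, the lower bound at level $\xi$ constrains $\ell^\xi y$ (resp.\ $\ell^\xi\rho_{\tau_c}y$), not $y$. So the sup is harmless only when $\mathrm{cf}(\ell^\xi x_0)>\omega$, and $\mathrm{cf}(x_0)$ is the wrong thing to check.
\begin{itemize}
\item For $\mathcal{I}_\lambda$ this needs a cofinality analysis of hyperlogarithms. The paper proves that $\mathrm{cf}(\alpha)>\kappa$ gives $\mathrm{cf}(\ell^\xi\alpha)=\mathrm{cf}(\alpha)$ for all $\xi<\lambda$, and that $\mathrm{cf}(\alpha)\le\kappa$ gives $\mathrm{cf}(\ell^\xi\alpha)\le\kappa$.
\item For ${\tau_c}_{+\lambda}$ the relevant ordinals are $\ell^\xi\mu$ with $\mu=\rho_{\tau_c}(x_0)$. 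These can have countable cofinality (e.g.\ $\mu=\omega^\omega$ at level $\xi=1$), while $\mathrm{cf}(x_0)$ is always uncountable there. Your criterion would therefore classify such points as easy when they are not.
\end{itemize}

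Second, when some level has small cofinality, a colouring that has forgotten the endpoints says nothing about them. ``Fix $i$ and use all $j>i$'' only gives end-homogeneity: bounds that depend on the smaller index and may be unbounded along the sequence. Extracting from $H$ an $\omega$-sequence with uniformly bounded endpoints is itself a Ramsey problem with $\aleph_0$ colours on a set of size $\kappa^+$. That has no positive answer in general, since $\kappa^+$ may be $\le 2^{\aleph_0}$, and $2^{\aleph_0}$ carries an $\aleph_0$-colouring of pairs with no monochromatic triangle.

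\textbf{The missing idea is to put the endpoints into the colour without raising the number of colours.} If $\mathrm{cf}(\ell^\xi x_0)\le\kappa$, fix a cofinal sequence $\langle\alpha_\xi(\eta):\eta<\mathrm{cf}(\ell^\xi x_0)\rangle$. Shrink each witness so that its bound at level $\xi$ is some $\alpha_\xi(\eta)$. The whole basic neighbourhood is then coded by a finite subset of $\lambda\times\kappa$, so there are only $\kappa$ colours, and homogeneity gives a single $O$ directly; this is the paper's Case 1. For ${\tau_c}_{+\lambda}$ the paper uses a mixed colour $(B_0,B_1)$:
\begin{itemize}
\item coded endpoints on levels with $\mathrm{cf}(\ell^\xi\mu)\le\kappa$;
\item forgotten endpoints plus countable sups on the remaining levels;
\item an intersection of countably many clubs in $x_0$.
\end{itemize}

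Keeping the count at $\kappa$ is essential. The relation $(2^\kappa)^+\to(\kappa^+)^2_{2^\kappa}$ that you invoke is false: colour pairs of distinct functions $2^\kappa\to 2$ by their first point of difference, and there is no monochromatic triangle. Your endpoint-free types number only $|[\lambda]^{<\omega}|\le\kappa$, so Step 1 survives with the correct relation $(2^\kappa)^+\to(\kappa^+)^2_\kappa$. But this is exactly why raw endpoints cannot be coded and must go through cofinal sequences of length $\le\kappa$. Your fallback with triples is unavailable: it needs $\beth_2(\kappa)^+$ indices, while $\Gamma$ has only $(2^\kappa)^+$ variables. Once a single $O$ is in hand, Step 2 is fine, since $O\cap[0,y_n]$ is an open neighbourhood of $y_n$.
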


We also introduce  natural classes of topological spaces, called $\lambda$-bouquet spaces and ultralinear $\lambda$-bouquet spaces, that yield strong completeness of $\mathsf{GL}(\mathbb{P})$ and $\mathsf{GL}.3(\mathbb{P})$ for uncountable languages $\mathbb{P}$ respectively. This construction generalizes the notion of an $\omega$-bouquet, originally defined in \cite{aguilera2017strong} to establish the strong completeness of $\mathsf{GL}$.\begin{thm}\label{mainthm}
  Suppose $\lambda\geq \aleph_0$ and $|\mathbb{P}|=\lambda$. Then:
\begin{enumerate}
\item	The logic $\mathsf{GL}(\mathbb{P})$  is strongly complete with respect to $\lambda$-bouquet spaces.
\item	The logic $\mathsf{GL}.3(\mathbb{P})$  is strongly complete with respect to ultralinear $\lambda$-bouquet spaces.
\end{enumerate}
\end{thm}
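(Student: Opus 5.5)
The overall plan is to transfer Kripke-style canonical model reasoning into topological semantics, generalizing the $\omega$-bouquet construction of \cite{aguilera2017strong}. I will first fix the definition I expect a $\lambda$-bouquet to have. Take a family of scattered spaces $X_\alpha$ for $\alpha<\lambda$. Glue them at a single new point $r$, and let the neighbourhoods of $r$ be determined by a filter on the index set, so that every open neighbourhood of $r$ contains tails of cofinally many of the $X_\alpha$. The ultralinear variant should additionally be built from ordinal-like (linearly ranked) pieces, so that the derived-set structure validates $\Box(\Box p\wedge p\to q)\vee\Box(\Box q\wedge q\to p)$. Soundness of $\mathsf{GL}$ is immediate because the spaces are scattered. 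For $\mathsf{GL}.3$, soundness will need a check that the derivative operator is ``linear'' in the sense required by the .3 axiom.

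For strong completeness, let $\Gamma$ be $\mathsf{GL}(\mathbb{P})$-consistent with $|\mathbb{P}|=\lambda$. The steps are as follows.
\begin{enumerate}
\item Extend $\Gamma$ to a maximal consistent set $\Gamma^*$. There are at most $\lambda$ formulas $\Diamond\varphi\in\Gamma^*$.
\item Use the fact that $\mathsf{GL}$ is weakly complete for finite irreflexive transitive trees (and $\mathsf{GL}.3$ for finite strict linear orders). For each finite $\Delta\subseteq\Gamma^*$, obtain a finite model, realized topologically on a finite-rank ordinal or on a finite tree space, whose root satisfies $\Delta$.
\item Index the finite subsets of $\Gamma^*$ by $\lambda$; there are $\lambda$ of them since $\lambda\ge\aleph_0$. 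Place the corresponding models as the petals $X_\alpha$.
\item Choose an ultrafilter $U$ on the finite subsets of $\Gamma^*$ extending the sets $\{\Delta : \Delta\supseteq\Delta_0\}$. Define the topology at $r$ via $U$: a set containing $r$ is open iff it includes a punctured neighbourhood of the petal roots for $U$-many petals.
\item Make $r$ satisfy exactly the variables $p\in\Gamma^*$.
\end{enumerate}

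The core lemma is a truth lemma at $r$, proved by induction on formulas: $r\Vdash\varphi$ iff $\varphi\in\Gamma^*$. For points inside petals, truth is local and the petals' own models handle it. For $\Diamond\psi$ at $r$, we have $r\in d(\llbracket\psi\rrbracket)$ iff every neighbourhood meets $\llbracket\psi\rrbracket\setminus\{r\}$. By the choice of topology, this holds iff the set of $\Delta$ whose petal has $\psi$ true near its root lies in $U$.

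Here lies the main obstacle. Petal truth of $\psi$ is only controlled for formulas inside $\Delta$, and $\Diamond\psi\in\Delta$ guarantees $\psi$ somewhere below the petal root rather than arbitrarily close to $r$. My first attempt will be the following. Arrange each petal so that its root itself satisfies $\Delta$, and let $r$ be a limit of the petal roots. Then $r\Vdash\Diamond\psi$ iff $U$-many roots satisfy $\psi$ or $\Diamond\psi$, and by {\L}o\'s-style reasoning this reduces to $\psi\vee\Diamond\psi\in\Gamma^*$. That is too weak, because we need $\Diamond\psi$ exactly.

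To fix this, I will replace the point $r$ by a copy of the whole construction iterated along the converse well-founded canonical frame. Concretely, I will take the $\mathsf{GL}$-canonical-style tree of maximal consistent sets reachable from $\Gamma^*$ via $\Diamond$-witnesses, pruned to be converse well-founded. The difficulty is that the canonical frame of $\mathsf{GL}$ is not converse well-founded, and this is exactly where the bouquet's filter topology must replace the Kripke relation. At a node $\Gamma'$, I attach one petal for each $\Diamond\psi\in\Gamma'$; the petal's root is a point satisfying an MCS containing $\psi$ and $\Box\neg\psi$, which exists by L\"ob's axiom. The bouquet filter is then chosen so that each such witness is a limit point of $\Diamond\psi$-formulas. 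The L\"ob witness $\psi\wedge\Box\neg\psi$ ensures that rank decreases, so the space stays scattered, and this is the step I expect to require the most care.

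For part (2), the same scheme applies with $\mathsf{GL}.3$-MCSs. The ultralinearity condition guarantees that all witnesses below a point are linearly comparable, so the .3 axiom remains valid.
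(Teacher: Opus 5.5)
\paragraph{A genuine gap in the final construction.} The claim that the L\"ob witness $\psi\wedge\Box\neg\psi$ ``ensures that rank decreases'' is false. L\"ob only stops a $\psi$-witness from seeing $\psi$ again; it gives no well-founded measure across different formulas. Every MCS-successor inherits all $\Box$-formulas of its predecessor together with their unboxed forms. So for $\Gamma_*=\{\Diamond p_0\}\cup\{\Box(p_i\to\Diamond p_{i+1}) : i<\omega\}$, the successive witnesses for $p_0,p_1,p_2,\dots$ form an infinite branch. No pruning removes this branch without deleting a witness your truth lemma needs. Hence the tree is not converse well-founded, as the paper's $\lambda$-bouquets must be. Worse, if the truth lemma holds, the nodes containing all $\Box(p_i\to\Diamond p_{i+1})$ and some $\Diamond p_n$ form a set with no isolated point, so the space is not even scattered. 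This is exactly the obstruction to Kripke strong completeness; saying the filter topology ``must replace the Kripke relation'' names it without overcoming it.

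Separately, the ultrafilter of your step 4 is wrong for $\mathsf{GL}$. If $\Box\Box\bot\wedge\Diamond p\wedge\Diamond\neg p\in\Gamma$, then $U$-almost all petal roots satisfy $\Box\bot$ and so decide $p$. Hence one of $\Diamond p,\Diamond\neg p$ fails at the root: the ultrafilter validates the $\mathsf{GL}.3$-principle $\Box\Box\bot\to\Box p\vee\Box\neg p$ there. The paper uses co-bounded filters for $\mathsf{GL}$ and ultrafilters only for $\mathsf{GL}.3$.

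\paragraph{The missing idea.} The filter lets you avoid giving \emph{any} petal the full successor theory. Your first attempt failed only because the petal roots satisfied finite pieces of $\Gamma^*$ itself, rather than pieces of a successor type. The paper enumerates the $\psi$ with $\Diamond\psi\in\Gamma$ as $\psi_\xi$ (each repeated unboundedly often) and the $\varphi$ with $\Box\varphi\in\Gamma$ as $\varphi_\zeta$. The $\xi$-th petal root is required to satisfy only $\Delta_\xi=\{\psi_\xi\}\cup\{\varphi_\zeta\wedge\Box\varphi_\zeta : \zeta<\xi\}$, which is consistent since $\Gamma\vdash\Diamond\bigwedge\Delta'$ for finite $\Delta'\subseteq\Delta_\xi$. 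By Proposition~\ref{prop:kripke-topology-eq}, the root of the lottery sum then satisfies $\Gamma$:
\begin{itemize}
\item $\Box\varphi\in\Gamma$ puts $\varphi\wedge\Box\varphi$ on a co-bounded set of petals;
\item $\Diamond\psi\in\Gamma$ puts $\psi$ on an unbounded set;
\item $\Diamond\psi\notin\Gamma$ puts $\neg\psi\wedge\Box\neg\psi$ on a co-bounded set.
\end{itemize}
So ``positively many petals satisfy $\psi\vee\Diamond\psi$'' is exactly $\Diamond\psi\in\Gamma$, and your objection disappears.

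\paragraph{Uncountable $\lambda$ and $\mathsf{GL}.3$.} For uncountable $\lambda$ the petals cannot be the finite models of your step 2. Take the co-bounded filter, $\Gamma\supseteq\Gamma_*$, and say $\mathrm{cf}(\lambda)>\omega$. Then countably many co-bounded sets intersect in a co-bounded set. So some petal must satisfy $p_0\vee\Diamond p_0$ together with all $(p_i\to\Diamond p_{i+1})\wedge\Box(p_i\to\Diamond p_{i+1})$, and this has no finite model. That is why the paper takes $\Delta_\xi\subseteq\mathcal{L}(\mathbb{P}_i)$ with $|\Delta_\xi|<\lambda_i$ and obtains the petals, themselves bouquets, by induction on $\lambda$. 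Finally, for $\mathsf{GL}.3$ the ultrafilter forces each petal to carry $\psi_\zeta\vee\Diamond\psi_\zeta$ for all $\zeta<\xi$ simultaneously. The consistency of these sets is where the $.3$ axiom is really used, via $\Box((p\wedge\Box p)\vee(q\wedge\Box q))\to\Box p\vee\Box q$, not any comparability of witnesses.
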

	The paper is organized as follows. In Section \ref{Preliminaries}, we give some preliminaries from set theory and provability logic.
  In Section \ref{Negative Results}, we apply the Erdo\"s-Rado partition theorem to prove Theorem \ref{incom uncount1}. In Section \ref{sec:bouquet},
we introduce the concept of $\lambda$-bouquet spaces and ultralinear $\lambda$-bouquet spaces and prove Theorem \ref{mainthm}. We conclude the paper with some questions.
	
\section{Preliminaries}
\label{Preliminaries}

\subsection{Set Theory}
Assuming familiarity with elementary ordinal arithmetic, we recall several  foundational set-theoretic definitions and notational conventions required for the subsequent development. For more detail on set theory see for example \cite{jech2003set}.

Let $\mathrm{Ord}$ denote the class of all ordinal numbers, with elements denoted by $\alpha,\beta,\dots$.
For any two ordinals $\alpha$ and $\beta$, if $\alpha<\beta$, then there is a unique ordinal $\gamma$ such that $\alpha+\gamma=\beta$. We denote this unique $\gamma$ by $-\alpha+\beta$.

\begin{dfn}
	Let $\alpha$ and $\beta$ be two limit ordinals.
	An increasing $\beta$-sequence $\langle\alpha_\xi \; : \; \xi<\beta\rangle$ is \textit{cofinal} in $\alpha$ if $\lim_{\xi\to\beta}\alpha_\xi=\sup_{\xi<\beta} \alpha_\xi =\alpha$.
	Similarly, a subset $A\subset \alpha$ is cofinal in $\alpha$ if $\sup A=\alpha$.
	
	For an infinite limit ordinal $\alpha$, the \emph{cofinality} of $\alpha$, denoted by $\mathrm{cf}(\alpha)$, is the least limit ordinal $\beta$ for which there exists an increasing $\beta$-sequence $\langle\alpha_\xi \; :\;  \xi<\beta\rangle$ cofinal in $\alpha$.
\end{dfn}

For a limit ordinal $\alpha$, a subset $A\subset \alpha$ is called \textit{bounded} if $\sup A<\alpha$, and \textit{unbounded} if $\sup A=\alpha$.

\begin{lem}\label{cf sum}
	Let $\alpha$ and $\beta$ be two ordinals. Then $\mathrm{cf}(\alpha+\beta) =\mathrm{cf}(\beta)$.
\end{lem}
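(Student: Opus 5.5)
The plan is to pass between cofinal sequences in $\beta$ and in $\alpha+\beta$ via translation by $\alpha$, reading the statement with $\beta$ a limit ordinal (so that both cofinalities are defined by the paper's definition). For successor $\beta$, $\alpha+\beta$ is a successor as well, and under the usual convention both cofinalities equal $1$. For $\beta=0$ the statement is vacuous or degenerate, so I will set that case aside.

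First I establish two basic facts of ordinal arithmetic.
\begin{itemize}
\item The map $\xi\mapsto \alpha+\xi$ is strictly increasing.
\item For limit $\beta$, it is continuous: $\alpha+\beta=\sup_{\xi<\beta}(\alpha+\xi)$.
\end{itemize}
Continuity follows directly from the recursive definition of ordinal addition at limits. In particular, $\alpha+\beta$ is a limit ordinal whenever $\beta$ is.

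For the inequality $\mathrm{cf}(\alpha+\beta)\le\mathrm{cf}(\beta)$, I take an increasing sequence $\langle\beta_\xi : \xi<\mathrm{cf}(\beta)\rangle$ cofinal in $\beta$. Then $\langle\alpha+\beta_\xi : \xi<\mathrm{cf}(\beta)\rangle$ is increasing by strict monotonicity. It is cofinal in $\alpha+\beta$: any $\gamma<\alpha+\beta$ lies below some $\alpha+\eta$ with $\eta<\beta$, by continuity, and $\eta<\beta_\xi$ for some $\xi$.

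For the reverse inequality, I take an increasing sequence $\langle\gamma_\xi : \xi<\delta\rangle$ cofinal in $\alpha+\beta$, where $\delta=\mathrm{cf}(\alpha+\beta)$. Since $\alpha<\alpha+\beta$, the entries that are $\ge\alpha$ form a final segment of the sequence. A final segment of a limit length $\delta$ has order type whose cofinality is still $\delta$; more simply, I can reindex it as a sequence of length $\le\delta$, and then use that cofinality is the least such length. For each $\gamma_\xi\ge\alpha$, set $\beta_\xi=-\alpha+\gamma_\xi$ using the subtraction introduced above. These $\beta_\xi$ are increasing and lie below $\beta$. They are cofinal in $\beta$: if some $\eta<\beta$ bounded them all, then every $\gamma_\xi\le\alpha+\eta<\alpha+\beta$, contradicting cofinality. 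This yields a cofinal sequence in $\beta$ of length at most $\delta$, so $\mathrm{cf}(\beta)\le\delta$.

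The only mildly delicate step is the reverse inequality. There one must handle the initial entries below $\alpha$ and check that the truncated sequence still has length exactly $\mathrm{cf}(\alpha+\beta)$ up to cofinality. The cleanest fix is to use the standard fact that $\mathrm{cf}$ of a limit ordinal equals the least order type of a cofinal subset, together with the observation that a cofinal subset of a limit ordinal has a cofinal increasing subsequence of regular length at most its order type.
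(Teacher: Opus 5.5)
The paper states this lemma without proof, as a standard fact of ordinal arithmetic, so there is no argument to compare against. Your proof is the standard one and it is correct: you push cofinal sequences forward along the strictly increasing, continuous map $\xi\mapsto\alpha+\xi$, and pull them back via left subtraction $\gamma\mapsto-\alpha+\gamma$.

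Two small remarks.

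First, the case $\beta=0$ is not vacuous: the statement is actually false there. For example, $\mathrm{cf}(\omega+0)=\omega\neq\mathrm{cf}(0)$. So the lemma genuinely needs $\beta>0$. That is also how it is used in the proof of Lemma~\ref{cf >}, with $\beta=\omega^\theta\geq 1$.

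Second, in the reverse inequality, your first justification is that a final segment of $\delta$ still has cofinality $\delta$. That claim is essentially an instance of the lemma you are proving, since $\xi_0+(-\xi_0+\delta)=\delta$, and it also uses the regularity of $\delta$. So it should not be relied on. Your alternative does suffice on its own. The truncated sequence has length $-\xi_0+\delta$, which is a limit ordinal (otherwise $\delta$ would be a successor) and is at most $\delta$. The paper defines $\mathrm{cf}(\beta)$ as the least limit length of an increasing cofinal sequence, so you get $\mathrm{cf}(\beta)\leq-\xi_0+\delta\leq\delta$ directly. The appeal in your last paragraph to cofinal subsequences of regular length is therefore unnecessary.
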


Two classes of functions on ordinals, known as \textit{hyperexponentials} and \textit{hyperlogarithms}, play a central role in the study of provability logic. They were first introduced in \cite{fernandez2013models,fernandez2014well}; see \cite{fernandez2013hyperations}  for a detailed account of their properties.

\begin{dfn}[Hyperexponential functions]
	Let $e:\mathrm{Ord}\to \mathrm{Ord}$ be a normal function defined by $\xi\mapsto -1+\omega^\xi$, called the \textit{exponential function}.
	The unique family of normal functions $(e^\zeta)_{\zeta\in\mathrm{Ord}}$ is called \textit{hyperexponentials} if they satisfy:
	\begin{enumerate}[i.]
		\item $e^1=e$,
		\item $e^{\alpha+\beta}=e^\alpha\circ e^\beta$,
		\item if $(f^\xi)_{\xi\in\mathrm{Ord}}$ is any family of functions satisfying i and ii, then $e^\alpha\beta\leq f^\alpha\beta$, for all $\alpha,\beta\in\mathrm{Ord}$.
	\end{enumerate}
\end{dfn}

\begin{lem}\cite[Corollary 2.7]{aguilera2017strong}\label{cf l lem}
	For any ordinal $\xi$ we have
	\[\mathrm{cf}(e^\lambda \xi) =
	\begin{cases}
		\mathrm{cf}(\xi) &  \xi \text{ is a limit ordinal} \\
		\max\{\mathrm{cf}(\lambda), \aleph_0\} & \text{o.w.}
	\end{cases}\]
\end{lem}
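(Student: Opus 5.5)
The plan is to handle the two clauses separately. In the first clause I use only that each $e^\lambda$ is normal. In the second clause I reduce to the description of $e^\lambda$ at successor arguments, by cases on whether $\lambda$ is a successor or a limit. Throughout, $\lambda>0$ and $\xi>0$ are tacitly assumed (for $\lambda=0$ or $\xi=0$ the statement is degenerate), and I use the convention $\mathrm{cf}(\lambda)=1$ for successor $\lambda$.

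\emph{Limit $\xi$.} Since $e^\lambda$ is normal, $e^\lambda\xi=\sup_{\eta<\xi}e^\lambda\eta$ and $e^\lambda$ is strictly increasing.
\begin{enumerate}
\item If $\langle\xi_\nu:\nu<\mathrm{cf}(\xi)\rangle$ is cofinal in $\xi$, then $\langle e^\lambda\xi_\nu:\nu<\mathrm{cf}(\xi)\rangle$ is increasing and cofinal in $e^\lambda\xi$. Hence $\mathrm{cf}(e^\lambda\xi)\le\mathrm{cf}(\xi)$.
\item Conversely, let $A\subset e^\lambda\xi$ be cofinal. For each $a\in A$ choose the least $\eta_a<\xi$ with $e^\lambda\eta_a>a$; such an $\eta_a$ exists by continuity. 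The set $\{\eta_a:a\in A\}$ is cofinal in $\xi$: otherwise it is bounded by some $\eta<\xi$, and then $A$ is bounded by $e^\lambda\eta<e^\lambda\xi$. Hence $\mathrm{cf}(\xi)\le|A|$, and taking $|A|=\mathrm{cf}(e^\lambda\xi)$ gives $\mathrm{cf}(\xi)\le\mathrm{cf}(e^\lambda\xi)$.
\end{enumerate}

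\emph{Successor $\xi=\zeta+1$, successor $\lambda=\mu+1$.} By clause ii of the definition, $e^{\mu+1}=e^\mu\circ e^1$, so
\[
e^\lambda(\zeta+1)=e^\mu(-1+\omega^{\zeta+1}).
\]
The argument $-1+\omega^{\zeta+1}$ is a limit ordinal of cofinality $\omega$. If $\mu=0$ this value already has cofinality $\aleph_0$. Otherwise the limit case just proved gives $\mathrm{cf}(e^\lambda\xi)=\aleph_0=\max\{\mathrm{cf}(\lambda),\aleph_0\}$.

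\emph{Successor $\xi=\zeta+1$, limit $\lambda$.} Here I invoke the known recursive description of hyperexponentials from Fern\'andez-Duque--Joosten:
\[
e^\lambda(\zeta+1)=\sup_{\mu<\lambda}e^\mu\bigl(e^\lambda\zeta+1\bigr).
\]
To justify it from the definition, write $e^\lambda=e^\mu\circ e^{-\mu+\lambda}$ and use the minimality clause iii. The map $\mu\mapsto e^\mu(e^\lambda\zeta+1)$ is non-decreasing, since $e^{\mu'}=e^\mu\circ e^{-\mu+\mu'}$ and $e^\delta\gamma\ge\gamma$. It is also eventually strictly increasing along any cofinal sequence in $\lambda$, so that the supremum is not attained. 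Given that, $e^\lambda\xi$ is the supremum of a sequence indexed cofinally in $\lambda$ and is not attained at any $\mu<\lambda$, which gives $\mathrm{cf}(e^\lambda\xi)=\mathrm{cf}(\lambda)$. This is uncountable or $\aleph_0$, so it equals $\max\{\mathrm{cf}(\lambda),\aleph_0\}$.

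\emph{Main obstacle.} The hardest step is this limit-$\lambda$ successor case: establishing the supremum formula and the strict growth in $\mu$ directly from the minimality characterization. I would try first to cite the hyperation literature for the formula. Failing that, I would prove it by showing that the functions defined by this recursion satisfy clauses i and ii, and are pointwise below any family satisfying i and ii.
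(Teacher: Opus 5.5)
Your limit-$\xi$ case and your successor-$\lambda$ case are correct. (The paper gives no proof of this lemma; it only cites \cite[Corollary 2.7]{aguilera2017strong}.)

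\textbf{The gap.} It is in the limit-$\lambda$ case. The identity $e^\lambda(\zeta+1)=\sup_{\mu<\lambda}e^\mu(e^\lambda\zeta+1)$ is false for limit $\lambda$ that are not additively indecomposable. So neither a citation nor a minimality argument can supply it.

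For a concrete failure, take $\lambda=\omega\cdot 2$ and $\zeta=1$, and recall that $e^\omega$ enumerates $0$ and the epsilon numbers, i.e.\ $e^\omega(1+\gamma)=\varepsilon_\gamma$. From $e^{\omega\cdot 2}=e^\omega\circ e^\omega$ you get $e^{\omega\cdot2}(1)=\varepsilon_{\varepsilon_0}$ and $e^{\omega\cdot 2}(2)=e^\omega(\varepsilon_1)=\varepsilon_{\varepsilon_1}$. But already the $\mu=\omega$ term of your supremum is $e^\omega(\varepsilon_{\varepsilon_0}+1)=\varepsilon_{\varepsilon_{\varepsilon_0}+1}$, which exceeds $\varepsilon_{\varepsilon_1}$.

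The underlying reason is this. The inequality $e^\lambda(\zeta+1)\ge e^\mu(e^\lambda\zeta+1)$ needs $e^\lambda(\zeta+1)$ to be a fixed point of $e^\mu$. Your decomposition $e^\lambda=e^\mu\circ e^{-\mu+\lambda}$ yields that only when $-\mu+\lambda=\lambda$. For all $\mu<\lambda$ this happens only when $\lambda=\omega^\beta$. This is also why the Fern\'andez-Duque--Joosten recursion is stated for exponents of the form $\omega^\beta$. Your fallback plan would fail for the same reason: the recursively defined family would not satisfy clause ii at such $\lambda$.

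\textbf{The repair.} Use the same decomposition you already used for successor $\lambda$.
\begin{enumerate}
\item Write $\lambda=\eta+\omega^\beta$ with $\beta>0$. Then $e^\lambda(\zeta+1)=e^\eta\bigl(e^{\omega^\beta}(\zeta+1)\bigr)$.
\item For the exponent $\omega^\beta$ the recursion is correct, and your non-attainment argument works. Set $x=e^{\omega^\beta}\zeta+1$, a successor. For $\delta\ge 1$ you have $e^\delta x\ge e(x)>x$, so $\mu\mapsto e^\mu x$ is strictly increasing on $\omega^\beta$. Hence $e^{\omega^\beta}(\zeta+1)$ is a limit ordinal of cofinality $\mathrm{cf}(\omega^\beta)$.
\item Apply your limit-$\xi$ case to the normal function $e^\eta$. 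This gives $\mathrm{cf}(e^\lambda(\zeta+1))=\mathrm{cf}(\omega^\beta)$, which equals $\mathrm{cf}(\lambda)$ by Lemma~\ref{cf sum}.
\end{enumerate}
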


By Cantor normal form theorem for ordinals, it is easy to see that, for any non-zero ordinal $\xi$, there exist $\alpha$ and a unique $\beta$ such that $\xi=\alpha+\omega^\beta$. This unique $\beta$ is called the \emph{end logarithm} of $\xi$ and is denoted by $\ell(\xi)$.

\begin{dfn}[Hyperlogarithms]
	The hyperlogarithms $(\ell^\xi)_{\xi\in\mathrm{Ord}}$ are the unique family of initial functions that satisfy:
		\begin{enumerate}[i.]
			\item $\ell^1=\ell$,
			\item $\ell^{\alpha+\beta}= \ell^\beta\circ \ell^\alpha$,
			\item if $(f^\xi)_{\xi\in\mathrm{Ord}}$ is any family of functions satisfying i and ii, then $\ell^\alpha\beta\geq f^\alpha\beta$, for all $\alpha,\beta\in\mathrm{Ord}$.
		\end{enumerate}
\end{dfn}

\begin{lem}\cite[Lemma 2.11]{aguilera2017strong} \label{l lem}
	For any $\xi\in\mathrm{Ord}$ we have
	\begin{enumerate}[i.]
		\item $\ell\xi \leq\xi$.
		\item If $\xi$ and $\delta$ are non-zero, then $\ell^\xi(\gamma+\delta)=\ell^\xi\delta$.
		\item\label{l lem3} If $\lambda = \eta + \omega^\beta$ is limit and $\xi$ is an ordinal, then there exists $\sigma<\lambda$ such that for all large $\zeta\in [\sigma,\lambda)$ we have
		$ \ell^\zeta \xi = e^{\omega^\beta} \ell^\lambda \xi $.
	\end{enumerate}
\end{lem}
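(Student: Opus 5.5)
The three items are of very different difficulty. I would prove them in order, by induction on the hyperlogarithm index, reducing everything to the base case $\ell^1=\ell$ via the composition law $\ell^{\alpha+\beta}=\ell^\beta\circ\ell^\alpha$. Throughout I use the standard facts from \cite{fernandez2013hyperations} that $\ell^\zeta$ is a left inverse of $e^\zeta$, i.e.\ $\ell^\zeta e^\zeta\xi=\xi$, and that each $\ell^\zeta$ is an initial function.

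For (i), write $\xi=\alpha+\omega^\beta$ (the case $\xi=0$ being trivial with $\ell 0=0$). Then $\ell\xi=\beta\le\omega^\beta\le\alpha+\omega^\beta=\xi$. An immediate consequence, which I will use repeatedly, is the following. Combining (i) with the composition law and induction on the index, one gets $\ell^{\zeta'}\xi\le\ell^{\zeta}\xi$ whenever $\zeta\le\zeta'$. Hence for fixed $\xi$ the sequence $(\ell^\zeta\xi)_\zeta$ is non-increasing, and therefore eventually constant below any limit.

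For (ii), the base case $\xi=1$ is Cantor normal form: for $\delta\neq 0$, the last term $\omega^{\beta}$ in the normal form of $\gamma+\delta$ is the last term of $\delta$, since the initial part $\gamma$ is absorbed. Hence $\ell(\gamma+\delta)=\ell\delta$. The successor step follows from the composition law, because $\ell^{\xi+1}=\ell\circ\ell^{\xi}$. The limit step needs more care:
\begin{itemize}
\item Write the limit index as $\lambda=\eta+\omega^\beta$. Using $\ell^\lambda=\ell^{\omega^\beta}\circ\ell^\eta$ and the induction hypothesis for $\eta<\lambda$, it suffices to treat additively indecomposable indices $\lambda=\omega^\beta$.
\item Here I would use the recursive description of $\ell^{\omega^\beta}$ coming from the maximality clause in the definition. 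By the monotonicity remark, both $\ell^\zeta(\gamma+\delta)$ and $\ell^\zeta\delta$ stabilise below $\lambda$.
\item By the induction hypothesis they agree for every $\zeta<\lambda$, so their eventual values coincide. Since $\ell^\lambda$ is determined by these eventual values, equality passes to $\lambda$.
\end{itemize}

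For (iii), take $\lambda=\eta+\omega^\beta$ limit. Because $\ell^\zeta\xi$ is non-increasing in $\zeta$, there is $\sigma<\lambda$ beyond which $\ell^\zeta\xi$ equals a constant $c$. I then need to identify $c$ with $e^{\omega^\beta}\ell^\lambda\xi$, in two steps:
\begin{itemize}
\item First reduce to $\eta=0$. For large $\zeta=\eta+\zeta'$ we have $\ell^\zeta\xi=\ell^{\zeta'}(\ell^\eta\xi)$, so one may replace $\xi$ by $\ell^\eta\xi$ and $\lambda$ by $\omega^\beta$.
\item Then show $c=e^{\omega^\beta}\rho$ where $\rho=\ell^{\omega^\beta}c$, equivalently that $c$ is a fixed point of $\ell^{\zeta}$-type compositions of the appropriate form.
\end{itemize}

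For the second step, the key observation is stability. For $\zeta<\omega^\beta$ we have $\zeta+\omega^\beta=\omega^\beta$, hence $\ell^{\omega^\beta}=\ell^{\omega^\beta}\circ\ell^\zeta$, and stability gives $\ell^{\zeta'}c=c$ for all $\zeta'<\omega^\beta$ (taking $\zeta'$ with $\zeta+\zeta'$ large). An ordinal fixed by all $\ell^{\zeta'}$ for $\zeta'<\omega^\beta$ should lie in the range of $e^{\omega^\beta}$, since $e^{\omega^\beta}$ enumerates exactly the common fixed points of the $e^{\zeta'}$, $\zeta'<\omega^\beta$, by normality and the composition law. Then $c=e^{\omega^\beta}\rho$, and applying $\ell^{\omega^\beta}$ together with the left-inverse property gives $\rho=\ell^{\omega^\beta}c=\ell^{\lambda}\xi$.

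I expect this last identification to be the main obstacle. Passing from ``fixed by all $\ell^{\zeta'}$'' to ``in the range of $e^{\omega^\beta}$'' requires relating fixed points of the hyperlogarithms to those of the hyperexponentials. I would first try to prove it by induction on $\beta$, using the left-inverse property and the minimality clause defining $(e^\zeta)$.
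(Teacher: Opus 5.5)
The paper gives no proof of this lemma; it is quoted from \cite{aguilera2017strong}, which relies on the theory of hyperations in \cite{fernandez2013hyperations}. Judged on its own, your proposal has a genuine gap, and it sits exactly at the substantive part of (iii). Your item (i) and your reductions in (iii) are fine. The values $\ell^\zeta\xi$ stabilise at some $c$ below $\lambda$. After replacing $\xi$ by $\ell^\eta\xi$ you may take $\lambda=\omega^\beta$ with $\beta\ge 1$. Every $\ell^\zeta$ with $\zeta<\omega^\beta$ fixes $c$, and $\ell^{\omega^\beta}c=\ell^\lambda\xi$ because $\zeta+\omega^\beta=\omega^\beta$.

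What is left, $c\in\mathrm{ran}(e^{\omega^\beta})$, is the whole content of (iii), and neither fact you invoke for it is available.
\begin{enumerate}
\item The left-inverse property only yields $\mathrm{Fix}(e^\zeta)\subseteq\mathrm{Fix}(\ell^\zeta)$: if $e^\zeta c=c$ then $\ell^\zeta c=\ell^\zeta e^\zeta c=c$. It says nothing about $\ell^\zeta$ outside $\mathrm{ran}(e^\zeta)$, so it cannot by itself turn a common fixed point of the $\ell^\zeta$ into one of the $e^\zeta$.
\item Normality plus the composition law only give $\mathrm{ran}(e^{\omega^\beta})\subseteq\bigcap_{\zeta<\omega^\beta}\mathrm{Fix}(e^\zeta)$, via $e^\zeta\circ e^{\omega^\beta}=e^{\zeta+\omega^\beta}=e^{\omega^\beta}$. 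The reverse inclusion can fail for families satisfying only clauses i and ii. For example, replace each $e^{\omega^\rho}$, $\rho\ge1$, by $e^{\omega^\rho}\circ e^{\omega^\rho}$ and extend along Cantor normal forms. The resulting family has $\omega$-th member $e^\omega\circ e^\omega$, whose range omits $\varepsilon_0\in\mathrm{Fix}(e)$.
\end{enumerate}
So this inclusion is exactly where the minimality clause must be used. It is the Veblen-progression characterisation of hyperations from \cite{fernandez2013hyperations}, and your proposal neither proves nor cites it.

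Once that characterisation is granted, your suggested induction on $\beta$ does close the gap. Prove by induction on $\beta\ge1$ that every $c$ fixed by all $\ell^\zeta$, $\zeta<\omega^\beta$, lies in $\mathrm{ran}(e^{\omega^\beta})$. First show that $c$ is fixed by $e^{\omega^{\beta'}}$ for each $\beta'<\beta$:
\begin{itemize}
\item for $\beta'=0$, because $\mathrm{Fix}(\ell)$ and $\mathrm{Fix}(e)$ both consist of $0$ and the $\varepsilon$-numbers;
\item for $1\le\beta'<\beta$, because the induction hypothesis gives $c=e^{\omega^{\beta'}}d$, whence $d=\ell^{\omega^{\beta'}}c=c$.
\end{itemize}
Every $\zeta<\omega^\beta$ is a finite sum of such powers, so $c$ is a common fixed point of the $e^\zeta$, $\zeta<\omega^\beta$. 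The characterisation then finishes the step.

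Two smaller points.
\begin{itemize}
\item Your monotonicity claim needs $\ell^\rho x\le x$ also for limit $\rho$, which does not follow from (i) and the composition law. It does follow from initiality, since an initial function $f$ satisfies $f(x)\le x$ by induction on $x$.
\item Item (ii) needs no limit step. For $\xi\ge1$ we have $\xi=1+(-1+\xi)$, so $\ell^\xi=\ell^{-1+\xi}\circ\ell$, and the base case already gives $\ell^\xi(\gamma+\delta)=\ell^\xi\delta$. Your limit step, which appeals to $\ell^\lambda$ being determined by the eventual values, tacitly presupposes (iii).
\end{itemize}
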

One can  prove that $\ell^\xi$ is a left inverse of $e^\xi$ for any $\xi\in\mathrm{Ord}$ (see \cite[Theorem 9.1]{fernandez2013hyperations}).
\begin{lem}{\cite[Lemma 2.12]{{aguilera2017strong}}} \label{left-ord}
	If $\xi<\zeta$, then $\ell^\xi e^\zeta = e^{-\xi+\zeta}$ and $\ell^\zeta e^\xi=\ell^{-\xi+\zeta}$. Furthermore, if $\alpha< e^\xi\beta$, then $\ell^\xi\alpha<\beta$, and if $\alpha<\ell^\xi\beta$, then $e^\xi\alpha<\beta$.
\end{lem}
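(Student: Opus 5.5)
The first two identities follow from the composition laws together with the fact, quoted just before the statement, that $\ell^\xi$ is a left inverse of $e^\xi$. The two inequalities I would prove by induction on $\xi$, using Lemma \ref{l lem}(\ref{l lem3}) at limit stages. Throughout I take $e^0=\ell^0=\mathrm{id}$. I will not assume that $\ell^\xi$ is monotone, and in fact it is not: $\ell(\omega+1)=0<1=\ell(\omega)$.

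\emph{Identities.} Since $\xi<\zeta$, we have $\zeta=\xi+(-\xi+\zeta)$. Clause ii of the definition of hyperexponentials gives $e^\zeta=e^\xi\circ e^{-\xi+\zeta}$. Hence
\[\ell^\xi e^\zeta=(\ell^\xi e^\xi)\,e^{-\xi+\zeta}=e^{-\xi+\zeta}.\]
Dually, clause ii of the definition of hyperlogarithms gives $\ell^\zeta=\ell^{-\xi+\zeta}\circ\ell^\xi$. Hence
\[\ell^\zeta e^\xi=\ell^{-\xi+\zeta}(\ell^\xi e^\xi)=\ell^{-\xi+\zeta}.\]

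\emph{Inequalities: base and successor cases.} The case $\xi=0$ is trivial. For $\xi=1$, first suppose $\alpha<e\beta=-1+\omega^\beta$. We may assume $\alpha>0$, since $\ell 0$ is the degenerate case. Write $\alpha=\gamma+\omega^{\ell\alpha}$. Then $\omega^{\ell\alpha}\le\alpha<\omega^\beta$, so $\ell\alpha<\beta$. Conversely, suppose $\alpha<\ell\beta$ and write $\beta=\gamma+\omega^{\ell\beta}$. Then $\beta\ge\omega^{\ell\beta}>\omega^{\alpha}\ge -1+\omega^\alpha=e\alpha$.

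For the successor step, use $e^{\xi+1}=e^\xi\circ e$ and $\ell^{\xi+1}=\ell\circ\ell^\xi$. If $\alpha<e^\xi(e\beta)$, the induction hypothesis gives $\ell^\xi\alpha<e\beta$, and the base case then gives $\ell(\ell^\xi\alpha)<\beta$. If $\alpha<\ell(\ell^\xi\beta)$, the base case gives $e\alpha<\ell^\xi\beta$, and the induction hypothesis then gives $e^\xi(e\alpha)<\beta$.

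\emph{Limit case (the main step).} Let $\lambda=\eta+\omega^{\beta'}$ be a limit ordinal. By Lemma \ref{l lem}(\ref{l lem3}), we may choose $\zeta<\lambda$ with $\zeta\ge\eta$ so large that $\ell^\zeta\alpha=e^{\omega^{\beta'}}\ell^\lambda\alpha$ and $\ell^\zeta\beta=e^{\omega^{\beta'}}\ell^\lambda\beta$. For such $\zeta$ we have $-\zeta+\lambda=\omega^{\beta'}$, because $\omega^{\beta'}$ is additively indecomposable. Therefore
\[e^\lambda=e^\zeta\circ e^{\omega^{\beta'}}.\]

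Now suppose $\alpha<e^\lambda\beta=e^\zeta(e^{\omega^{\beta'}}\beta)$. The induction hypothesis at $\zeta$ gives
\[e^{\omega^{\beta'}}\ell^\lambda\alpha=\ell^\zeta\alpha<e^{\omega^{\beta'}}\beta.\]
Since $e^{\omega^{\beta'}}$ is normal, and hence strictly increasing, it follows that $\ell^\lambda\alpha<\beta$.

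Next suppose $\alpha<\ell^\lambda\beta$. By normality,
\[e^{\omega^{\beta'}}\alpha<e^{\omega^{\beta'}}\ell^\lambda\beta=\ell^\zeta\beta.\]
The induction hypothesis at $\zeta$ then gives $e^\lambda\alpha=e^\zeta e^{\omega^{\beta'}}\alpha<\beta$.

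\emph{Main obstacle.} The delicate point is the limit step. One has to choose $\zeta$ large enough for Lemma \ref{l lem}(\ref{l lem3}) to apply to both $\alpha$ and $\beta$, and simultaneously to have $-\zeta+\lambda=\omega^{\beta'}$. Then one must use the strict monotonicity of the normal function $e^{\omega^{\beta'}}$ to transfer the strict inequality. The lack of monotonicity of $\ell^\xi$ is what forces this route rather than a direct comparison.
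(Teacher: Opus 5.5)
Your proof is correct. The paper gives no argument of its own for this lemma. It simply quotes \cite[Lemma 2.12]{aguilera2017strong}, which in turn rests on the theory of hyperexponentials and hyperlogarithms in \cite{fernandez2013hyperations}.

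Your route is therefore a genuinely different, self-contained derivation from facts the paper lists before the statement:
\begin{itemize}
\item The two identities follow directly from the composition laws $e^{\xi+\delta}=e^\xi\circ e^\delta$ and $\ell^{\xi+\delta}=\ell^\delta\circ\ell^\xi$, together with $\ell^\xi e^\xi=\mathrm{id}$.
\item The two inequalities go by induction on $\xi$. You handle $\xi=1$ by Cantor normal form and the successor steps by composing with that case.
\item At a limit $\lambda=\eta+\omega^{\beta'}$ you pick $\zeta\in[\eta,\lambda)$ beyond the thresholds that Lemma~\ref{l lem}(\ref{l lem3}) provides for both $\alpha$ and $\beta$. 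This gives $e^\lambda=e^\zeta\circ e^{\omega^{\beta'}}$, $\ell^\zeta\alpha=e^{\omega^{\beta'}}\ell^\lambda\alpha$ and $\ell^\zeta\beta=e^{\omega^{\beta'}}\ell^\lambda\beta$. You then cancel, or apply, the strictly increasing function $e^{\omega^{\beta'}}$.
\end{itemize}
What this buys is that the lemma follows from the left-inverse property and Lemma~\ref{l lem}(\ref{l lem3}) alone, with no further appeal to the general machinery. Your remark that $\ell$ is not monotone correctly explains why the limit step must go through $e^{\omega^{\beta'}}$ rather than through monotonicity of $\ell^\xi$.

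One small point should be made explicit. The paper defines $\ell$ only on nonzero ordinals, so you should fix the convention $\ell 0=0$. Your phrase ``$\ell 0$ is the degenerate case'' then needs spelling out in the case $\xi=1$:
\begin{itemize}
\item $0\le\alpha<e\beta$ forces $\beta>0$, because $e0=0$; hence $\ell 0=0<\beta$.
\item $\alpha<\ell\beta$ likewise forces $\beta>0$, which is what licenses writing $\beta=\gamma+\omega^{\ell\beta}$.
\end{itemize}
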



The Erd\H{o}s--Rado theorem is a central result in infinitary combinatorics and set theory, extending Ramsey's  theorem to infinite sets and higher cardinalities. Before stating the theorem, we recall the relevant notation.

The \textit{Beth function} is defined inductively as follows: 
\[
\beth_0(\kappa) = \kappa, \qquad
\beth_{\alpha+1}(\kappa) = 2^{\beth_\alpha(\kappa)}, \qquad
\beth_\alpha(\kappa) = \sup_{\beta<\alpha} \beth_\beta(\kappa) \quad (\alpha \text{ a limit ordinal}).
\]
For cardinals $\lambda,\kappa,\delta$ and a positive integer $n$, the partition relation
$$\lambda \to (\kappa)_\delta^{n}$$
asserts that for every colouring of the  $n$-element subsets of a set of cardinality $\lambda$ with $\delta$ colours, there is a subset of size $\kappa$ whose  $n$-element subsets are monochromatic.

\begin{thm}[Erd\H{o}s--Rado]\label{erdos-rado}
	Let $\kappa$ be an infinite cardinal and
	$n$ be a positive integer. Then
	$\beth_n(\kappa)^+\to (\kappa^+)_{\kappa}^{n+1}.$
	In particular,
	 $(2^{\kappa})^+ \to (\kappa^+)^2_{\kappa}$.
\end{thm}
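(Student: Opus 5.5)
The plan is to argue by induction on $n$, using the classical ``end-homogeneous set'' technique. The main case is $n=1$; the general step repeats the same construction one level up.

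\textbf{Base case $n=1$.} Let $\mu=(2^\kappa)^+$ and let $c:[\mu]^2\to\kappa$ be a colouring. For $B\subseteq\mu$ and $\beta\in\mu\setminus B$, call the function $\xi\mapsto c(\{\xi,\beta\})$ on $B$ the \emph{type} of $\beta$ over $B$.
\begin{enumerate}
\item Build $A\subseteq\mu$ with $|A|=2^\kappa$, closed under the following condition: for every $B\subseteq A$ with $|B|\le\kappa$ and every $\beta\in\mu\setminus B$, some $\alpha\in A\setminus B$ has the same type over $B$ as $\beta$.
\item To do this, take a continuous increasing union of length $\kappa^+$. At each stage $|A_i|\le 2^\kappa$, so there are at most $(2^\kappa)^\kappa=2^\kappa$ sets $B$ and at most $\kappa^\kappa=2^\kappa$ types over each $B$. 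Hence adding one witness per realized type keeps the size at $2^\kappa$.
\item Since $\mathrm{cf}(\kappa^+)>\kappa$, every $B\subseteq A$ of size $\le\kappa$ already lies in some stage $A_i$. Its witnesses then appear in $A_{i+1}$, which gives the closure condition for $A$.
\item Because $|A|<\mu$, we can fix $\beta\in\mu\setminus A$.
\item Recursively choose distinct $x_\xi\in A$ for $\xi<\kappa^+$. Let $x_\xi$ realize the type of $\beta$ over $\{x_\eta:\eta<\xi\}$, which is possible by the closure property. Then $c(\{x_\eta,x_\xi\})=c(\{x_\eta,\beta\})$ for all $\eta<\xi$.
\end{enumerate}
So the colour of a pair $\{x_\eta,x_\xi\}$ with $\eta<\xi$ depends only on $\eta$; call it $f(\eta)$. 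Since $f:\kappa^+\to\kappa$, the pigeonhole principle gives $H\subseteq\kappa^+$ of size $\kappa^+$ on which $f$ is constant. Then $\{x_\eta:\eta\in H\}$ is homogeneous.

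\textbf{Inductive step.} Assume $\beth_n(\kappa)^+\to(\kappa^+)^{n+1}_\kappa$, and let $c:[\beth_{n+1}(\kappa)^+]^{n+2}\to\kappa$.
\begin{enumerate}
\item Run the same closure argument with $\theta=\beth_n(\kappa)$ in place of $\kappa$. Types are now taken over $(n+1)$-subsets, i.e.\ the map $s\mapsto c(s\cup\{\beta\})$ for $s\in[B]^{n+1}$.
\item There are at most $\kappa^{\theta}\le 2^{\theta}=\beth_{n+1}(\kappa)$ types over a set $B$ of size $\le\theta$. So we get $A$ of size $\beth_{n+1}(\kappa)$, closed in $\theta^+$ steps, together with some $\beta\notin A$.
\item Build a sequence $\langle x_\xi:\xi<\theta^+\rangle$ as before. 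It is \emph{end-homogeneous}: for increasing tuples, $c(\{x_{\eta_0},\dots,x_{\eta_n},x_\xi\})$ depends only on $\{\eta_0,\dots,\eta_n\}$.
\item This defines a colouring $d:[\theta^+]^{n+1}\to\kappa$. By the induction hypothesis, $d$ has a homogeneous set of size $\kappa^+$, and it is homogeneous for $c$ as well.
\end{enumerate}

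The ``in particular'' clause is exactly the case $n=1$, since $\beth_1(\kappa)=2^\kappa$.

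\textbf{Main obstacle.} The delicate point is the closure construction in each step. We must check three things:
\begin{itemize}
\item the cardinal arithmetic $(2^\theta)^\theta=2^\theta$;
\item that the length $\theta^+$ of the union captures every small $B$, which uses regularity of $\theta^+$;
\item that the witnesses can be chosen outside $B$, so the $x_\xi$ are distinct.
\end{itemize}
The last point holds because $\beta\notin B$ and we only require that types realized outside $B$ are realized again outside $B$.
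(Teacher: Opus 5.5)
Your proof is correct. It is the standard textbook argument for Erd\H{o}s--Rado: build a set of size $2^\theta$ closed under realizing one-point types over subsets of size $\le\theta$, extract an end-homogeneous sequence of length $\theta^+$, and then apply the induction hypothesis (or, for $n=1$, the pigeonhole principle) to the induced colouring. The cardinal arithmetic, the use of regularity of $\theta^+$ to capture small sets, and the choice of witnesses outside $B$ to keep the $x_\xi$ distinct are all handled properly.

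The paper itself gives no proof of this statement. It quotes the theorem as a classical result and only ever applies the case $n=1$, which your base case already establishes on its own.
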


In the special case $n=1$ and $\kappa=\aleph_0$, the theorem states: if the pairs of a set of cardinality just above the continuum are coloured with countably many colours, then there exists an uncountable subset all of whose pairs receive the same colour. Equivalently, if $f$ is a colouring of the $2$-element subsets of a set of size $\mathfrak{c}^+=(2^{\aleph_0})^+$ into $\aleph_0$ colours, then there exists a subset of cardinality $\aleph_1$ that is homogeneous for $f$.

 Filters and ultrafilters are important combinatorial objects, who appear in different areas of mathematics, in this paper we need only basic facts and definitions:

\begin{dfn}\label{def:filter}
  Let $X$ be a set. We say that $F\subset \mathcal{P}(X)$ is a \emph{filter (on $X$)} if the following hold:
  \begin{enumerate}
    \item $X\in F$ and $\emptyset \notin F$;
    \item $X,Y\in F$ implies $X\cap Y\in F$.
  \end{enumerate}
  We say that a filter $F$ is an \emph{ultrafilter} if for any $A\subset X$, either $A\in F$ or $X\setminus A\in F$. A filter $F$ on $X$ is called \emph{principal} if it is generated by a single subset i.e. $F=\set{A\subset X: A\supset X_0}$ for $X_0\subset X$ and is called \emph{non-principal} otherwise.

  We denote by $F^+$ the subset of $\mathcal{P}(X)$ such that $A\in F^+$ if and only if $A\cap B\ne \emptyset$ whenever $B\in F$, we call $F^+$ \emph{the set of $F$-positive subsets}. Note that $F\subset F^+$, moreover $F=F^+$ if and only if $F$ is an ultrafilter.

  Given a filter $F$ on $X$, we say that a statement $\varphi$ holds \emph{$F$-almost everywhere} (or \emph{$F$-a.e.}) if $\set{x\in X: \varphi(x)}\in F$.
\end{dfn}

A natural example of a filter is the \emph{co-bounded filter} on an infinite cardinal $\kappa$, which is the set of all subsets of $\kappa$, whose complement is bounded in $\kappa$. The set of unbounded subsets of $\kappa$ is the set of positive subsets with respect to the co-bounded filter. Existence of non-principal ultrafilters guaranteed by the Axiom of Choice, however, it is consistent with $\zf$ that there are no non-principal ultrafilters on $\omega$, e.g. under the Axiom of Determinacy.

\subsection{Provability Logic $\mathsf{GL}$ and Its Semantics}

Let $\kappa$ be an infinite cardinal, and define $\mathbb{P}=\{p_\xi :  \xi<\kappa\}$ as a set of propositional variables indexed by ordinals less than $\kappa$.
The corresponding modal language, denoted by $\mathcal{L}(\mathbb{P})$, is constructed from  $\mathbb{P}$ using the standard Boolean connectives $\neg,\wedge$ together with the modal operator $\Box$. The remaining Boolean connectives $\vee, \to$ are defined in the usual way, and the operator $\lozenge$ is introduced as the dual of $\Box$. We also use $\mathcal{L}(\mathbb{P})$ to refer to the set of all well-formed formulas generated in this language.

The provability logic $\mathsf{GL}(\mathbb{P})$ is defined within $\mathcal{L}(\mathbb{P})$ and is axiomatized by the following schemata and inference rules:
\begin{itemize}
	\item All propositional tautologies,
	\item $\textbf{K:} \Box(\varphi\to \psi) \to (\Box\varphi\to \Box\psi)$,
	\item $\textbf{L\"ob:} \Box(\Box\varphi\to \varphi)\to \Box\varphi$,
	\item Modus ponens,
	\item Necessitation: $\frac{\varphi}{\Box\varphi}$. 
\end{itemize}
When $\mathbb{P}$ is countable, we denote the corresponding logic simply by $\mathsf{GL}$.

One of the most widely studied semantics for modal logic, and in particular for provability logic, is relational semantics, commonly referred to as Kripke semantics.
A \textit{Kripke frame} is a relational structure $\mathfrak{F}=(W,R)$, where $W$ is a non-empty set and $R$ is a binary relation on $W$.
A \textit{tree} is a frame $(T,R)$ where $R$ is transitive and, for each $t\in T$, the set of its $R$-predecessors $R^{-1}(t)=\{w\in T \; : \;  wRt\}$ is well-ordered under $R$.
Moreover, $(T,R)$ contains a unique minimal element with respect to $R$, called the \textit{root}, denoted by $\mathrm{root}(T)$.
Also, for each $t\in T$, the set of all its immediate successors, i.e., those $s\in T$ such that $tRs$ and there is no $u\in T$ with $tRu$ and $uRs$, is denoted by $\mathrm{Suc}(t)$.

A \textit{Kripke model} is obtained by equipping a frame with a valuation function $\lbr\cdot\rbr:\mathbb{P}\to \mathcal{P}(W)$.
This valuation is extended to all formulas in $\mathcal{L}(\mathbb{P})$ in such a way that it respects the Boolean connectives, and satisfies
$\lbr\lozenge\varphi\rbr = R^{-1}(\lbr\varphi\rbr)$.

A relation $R$ on a set $W$ is called \emph{converse well-founded} if every non-empty subset of $W$ has an $R$-maximal element.
It is well known that $\mathsf{GL}$ is complete with respect to the class of transitive and converse well-founded Kripke frames. More specifically:

\begin{thm}\cite{segerberg}
	$\mathsf{GL}$ is sound and complete with respect to the class of all
	finite irreflexive trees.
\end{thm}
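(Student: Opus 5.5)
This is Segerberg's classical theorem: soundness and completeness of $\mathsf{GL}$ with respect to finite irreflexive trees. The plan is the standard two-part argument. Completeness will go through a finite canonical-style model, followed by unravelling into a tree.

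\emph{Soundness.} First I check that every finite irreflexive tree $(T,R)$ is transitive and converse well-founded. Transitivity is part of the definition of a tree. Converse well-foundedness holds because $R$ is an irreflexive transitive relation on a finite set, hence a strict partial order, and every non-empty finite subset of a strict partial order has a maximal element. On such frames axiom \textbf{K} is valid for any Kripke model, and both rules preserve validity. For L\"ob's axiom I argue contrapositively. Suppose $w\notin\lbr\Box\varphi\rbr$. Then the set $\{v : wRv,\ v\notin\lbr\varphi\rbr\}$ is non-empty, so by converse well-foundedness it has an $R$-maximal element $v$. By transitivity, every $R$-successor of $v$ satisfies $\varphi$, so $v\in\lbr\Box\varphi\rbr\setminus\lbr\varphi\rbr$. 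Hence $w\notin\lbr\Box(\Box\varphi\to\varphi)\rbr$.

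\emph{Completeness.} Suppose $\nvdash_{\mathsf{GL}}\varphi$. Let $\Sigma$ be the finite set of subformulas of $\varphi$ closed under single negation. Let $W$ be the set of maximal $\mathsf{GL}$-consistent subsets of $\Sigma$; it is finite. Define $xRy$ iff both of the following hold:
\begin{itemize}
\item for every $\Box\psi\in x$, both $\psi\in y$ and $\Box\psi\in y$;
\item there is some $\Box\chi\in y$ with $\Box\chi\notin x$.
\end{itemize}
This relation is transitive. It is also irreflexive, because of the second clause, which forces strict growth of the finite set of boxed formulas. So $R$ is converse well-founded.

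The key step is a truth lemma: for $\psi\in\Sigma$ and $x\in W$, $x\in\lbr\psi\rbr$ iff $\psi\in x$. The only nontrivial case is $\Box\psi\notin x$. There I need a $y$ with $xRy$ and $\neg\psi\in y$. I show that the set
\[
\{\chi,\Box\chi:\Box\chi\in x\}\cup\{\Box\psi,\neg\psi\}
\]
is consistent. If it were not, then $\vdash \bigwedge(\chi\wedge\Box\chi)\to(\Box\psi\to\psi)$. Necessitation and \textbf{K} then give $\bigwedge\Box\chi\to\Box(\Box\psi\to\psi)$, where I use $\vdash\Box\chi\to\Box\Box\chi$, itself derivable from L\"ob's axiom. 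L\"ob's axiom now yields $\Box\psi\in x$, a contradiction. Extending this set to a maximal consistent subset of $\Sigma$ gives the required $y$. The witness $\Box\psi$ lies in $y$ but not in $x$, which secures the second clause of $R$. Deriving the 4-axiom inside $\mathsf{GL}$, and getting this witness argument right, is the main obstacle. The rest is bookkeeping.

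\emph{Unravelling.} Pick $x_0\in W$ with $\neg\varphi\in x_0$. Unravel the rooted finite frame generated by $x_0$: take as nodes the finite $R$-paths $x_0Rx_1R\cdots Rx_n$, and let a path be related to each of its proper extensions. Because $R$ is irreflexive, transitive and $W$ is finite, every path has length at most $|W|$. Finite branching then makes the unravelled tree finite and irreflexive. The map sending a path to its last point is a p-morphism, so it preserves truth of formulas. Hence $\varphi$ fails at the root of a finite irreflexive tree.
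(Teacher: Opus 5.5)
The paper gives no proof of this statement; it quotes Segerberg's theorem only as background for the failure of strong completeness, so there is no in-paper argument to compare your route against. Your proof is correct and is the standard textbook one. It builds a finite model on the maximal consistent subsets of the closure set $\Sigma$, where the extra clause in $R$ (some new boxed formula appears) gives irreflexivity and hence converse well-foundedness. It then unravels this model along the last-point p-morphism.

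Two routine points are left implicit and are worth writing out:
\begin{enumerate}
\item $\vdash_{\mathsf{GL}}\Box\chi\to\Box\Box\chi$ follows by applying L\"ob's axiom to $\chi\wedge\Box\chi$.
\item The path-unravelling meets the paper's definition of a tree. The $R$-predecessors of a path are its finitely many proper initial segments, which are linearly ordered, and $\langle x_0\rangle$ is the unique root.
\end{enumerate}
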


Nevertheless, $\mathsf{GL}$ fails to be strongly complete with respect to its relational semantics. For instance, consider
\begin{equation}\label{eq:counterexample-intro}\tag{$*$}
\Gamma_*=\{\lozenge p_0\} \cup\{\Box (p_i\to \lozenge p_{i+1}) \; : \;   i\in\omega\}.
\end{equation}
This set is finitely satisfiable, yet it is not satisfiable in any converse well-founded tree.

While Kripke semantics has been the traditional framework for modal and provability logics, recent attention has shifted toward topological semantics, where $\mathsf{GL}$ exhibits particularly favourable properties. In contrast to the Kripke setting, $\mathsf{GL}$ is strongly complete with respect to topological semantics.

Let $\mathfrak{X}=(X,\tau)$ be a topological space.
A \textit{limit point} of a set $A\subseteq X$ is a point $x\in X$ such that for every open neighbourhood $U$ of $x$ the intersection $A\cap (U-\{x\})$ is non-empty. The \emph{derived set} of $A$, denoted by $d(A)$, is the set of all limit points of $A$.
A point $x\in X$ is \textit{isolated} in $A$ if it is not a limit point of $A$.

The transfinite iteration $(d^\xi)_{\xi\in \mathrm{Ord}}$ of derived-set operator $d$ is defined by
\[
d^0 A = A, \qquad
d^{\xi+1} A = d(d^\xi A), \qquad
d^\alpha A = \bigcap_{\xi<\alpha} d^\xi A \quad \text{for limit ordinals } \alpha.
\]
For any topological space $\mathfrak{X}=(X,\tau)$, the sequence $\langle d^\xi(X) \; : \; {\xi\in \mathrm{Ord}}\rangle$ is a decreasing sequence, so there exists a least ordinal $\alpha$ such that $d^\alpha(X) = d^{\alpha+1}(X)$.
For $x\in X$, the \textit{rank} of $x$, denoted by $\rho_\mathfrak{X}x$ (or $\rho_\tau x$), is the least ordinal $\xi$ such that $x\not\in d^{\xi+1} X$, if such an ordinal exists.

\begin{dfn}
	A topological space $\mathfrak{X}$ is \textit{scattered} if each non-empty subset of $X$ has an isolated point.
\end{dfn}
It is shown by Cantor that a topological space $\mathfrak{X}$ is scattered if and only if there exists an ordinal $\xi$ such that $d^\xi X=\emptyset$. In this case, $\rho_\mathfrak{X}x$ exists for all $x\in X$.
The rank of $\mathfrak{X}$ is then defined by $\rho\mathfrak{X} =\sup_{x\in X}(\rho_\tau x+1)$.

\begin{exm}\cite{aguilera2017strong}\label{exam ord}
	\begin{itemize}
		\item The \emph{upset topology} on a relational structure $(T,R)$ contains all $U\subseteq T$ such that $U\in \tau_R$ provided that $R(x)\subseteq U$ for each $x\in U$.
		Any leaf of an irreflexive tree $T$ is an isolated point.
		Hence, if $T$ is a converse well-founded tree, then $(T,\tau_R)$ is scattered.
		\item The \emph{left topology} over an ordinal $\Theta$, denoted by $\mathcal{I}_0$, is generated by all sets of the form $[0,\beta]$. Then $(\Theta,\mathcal{I}_0)$ is scattered, with the least element of each set as its isolated point. Then for each $\alpha\in\Theta$ we have $\rho_{\mathcal{I}_0}(\alpha)= \alpha$.
		\item The \emph{interval topology} (or \emph{order topology}), on an ordinal $\Theta$, denoted by $\mathcal{I}_1$, is generated by all sets of the form $(\alpha,\beta]$ and $[0,\beta]$.
		Then $(\Theta,\mathcal{I}_1)$ is a scattered space where all successor ordinals are isolated points.
		In this case, $\rho_{\mathcal{I}_1}(\alpha)=\ell(\alpha)$.
		\item The \emph{club topology} on an ordinal $\Theta$, denoted by $\tau_c$, is the unique topology such that if $\mathrm{cf}(\alpha)\leq\omega$, then $\alpha$ is an isolated point and if $\mathrm{cf}(\alpha)>\omega$, then $U\subseteq\Theta$ contains a neighbourhood of $\alpha$ iff $\alpha\in U$ and $U$ contains a club (i.e. closed and unbounded in $\mathcal{I}_1$) in $\alpha$. Then $(\Theta,\tau_c)$ is a scattered space and for non-zero $\alpha$, $\rho_{\tau_c}(\xi)=\alpha$ iff $\mathrm{cf}(\xi)=\Omega_\alpha$, where $(\Omega_\alpha)_{\alpha\in\mathrm{Ord}}$ is an enumeration of all infinite regular cardinals.
	\end{itemize}
\end{exm}

\begin{dfn}[Topological Semantics]
	A topological model based on a topological space $\mathfrak{X}=(X,\tau)$ is a triple $\mathfrak{M}=(X,\tau,\lbr\cdot\rbr)$ where $\lbr\cdot\rbr:\mathbb{P}\to \mathcal{P}(X)$ is a valuation function.
	The valuation extends naturally to Boolean connectives, and	$\lbr\lozenge\varphi\rbr= d(\lbr\varphi\rbr)$.
\end{dfn}
We write $\mathfrak{M},x\models\varphi$, when $x\in \lbr\varphi\rbr$.
A formula $\varphi$ is \textit{satisfiable} in $\mathfrak{X}$ if there is a model $\mathfrak{M}$ based on $\mathfrak{X}$ and a point $x\in X$ such that $\mathfrak{M},x\models\varphi$.
Also, $\varphi$ is \textit{valid} in $\mathfrak{X}$, denoted by $\mathfrak{X}\models\varphi$, if $\lbr\varphi\rbr=X$ for any valuation $\lbr\cdot\rbr$ on $\mathfrak{X}$.

\begin{thm}\cite{esakia}
	$\mathsf{GL}$ is sound and (strongly) complete with respect to the class of all scattered spaces.
\end{thm}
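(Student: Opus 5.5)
Soundness and strong completeness are separate tasks. Soundness is routine. For strong completeness I would keep the canonical model of $\mathsf{GL}(\mathbb{P})$ and enlarge it with an ordinal coordinate, which forces scatteredness, and then prove a truth lemma that holds only at points of sufficiently high ordinal level.

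\emph{Soundness.} Work in a scattered space $\mathfrak{X}$ with $\Box$ read as the dual of $d$. Since $d(A\cup B)=d(A)\cup d(B)$ and $d(\emptyset)=\emptyset$, axiom $\textbf{K}$ holds and necessitation preserves validity. Löb's axiom is equivalent, in dual form, to $d(A)\subseteq d(A\setminus d(A))$ for every $A\subseteq X$. To prove this, let $x\in d(A)$ and let $U$ be an open neighbourhood of $x$. The set $U\cap A\setminus\{x\}$ is non-empty, and by scatteredness it has a point $y$ that is isolated in it. Then $y\notin d(A)$, since otherwise some point of $U\cap A\setminus\{x\}$ other than $y$ would lie in every small neighbourhood of $y$ (note that $y\in U$ and $U$ is open). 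Hence $U$ meets $A\setminus d(A)$ in a point different from $x$, and $x\in d(A\setminus d(A))$.

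\emph{Completeness: the space.} Let $\Gamma$ be $\mathsf{GL}(\mathbb{P})$-consistent and let $W$ be the set of maximal consistent sets, with the canonical relation $wRv$ iff $\Box\psi\in w$ implies $\psi,\Box\psi\in v$. Extend $\Gamma$ to some $w_0\in W$. Take $X=W\times\omega^2$. The neighbourhood base of $(w,\xi)$ consists of
\[N_{\psi,\zeta}(w,\xi)=\{(w,\xi)\}\cup\{(v,\eta)\;:\;wRv,\ \psi\in v,\ \zeta\le\eta<\xi\}\]
for $\Box\psi\in w$ and $\zeta<\xi$. These sets form a basis because they are closed under finite intersections, since $\Box(\psi\wedge\psi')\in w$ and $\max(\zeta,\zeta')<\xi$. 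Moreover, every point of a basic set has a basic neighbourhood inside it, by transitivity of $R$ and the fact that $\Box\psi\to\Box\Box\psi$ is a theorem. The space is scattered: in any non-empty $A\subseteq X$, a point with minimal second coordinate is isolated in $A$, because its basic neighbourhoods contain no other point at the same or a lower level.

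\emph{Completeness: the truth lemma.} Put $\lbr p\rbr=\{(w,\xi):p\in w\}$. I would prove by induction on $\varphi$ that for every $w\in W$ and every $\xi\ge\omega\cdot(\mathrm{md}(\varphi)+1)$,
\[(w,\xi)\models\varphi\iff\varphi\in w,\]
where $\mathrm{md}$ is modal depth. Since $\omega^2$ exceeds every such bound, the point $(w_0,\omega^2)$ would then satisfy all of $\Gamma$.

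The forward direction of the $\lozenge$ case is standard. Suppose $\lozenge\varphi\in w$. For each $\Box\psi\in w$ there is a $v$ with $wRv$ and $\varphi\wedge\psi\in v$, by the usual canonical-model argument. Any basic neighbourhood $N_{\psi,\zeta}$ restricts to levels $\ge\zeta$, so I place this $v$ at a level $\eta$ with $\zeta\le\eta<\xi$ and $\eta$ above the bound for $\varphi$. By induction $(v,\eta)\models\varphi$, so $(w,\xi)\in d(\lbr\varphi\rbr)$.

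\emph{Main obstacle.} The converse is the hard step: if $\Box\neg\varphi\in w$, then $(w,\xi)\notin d(\lbr\varphi\rbr)$. I would take $\psi=\neg\varphi\wedge\Box\neg\varphi$ together with $\zeta$ equal to the bound for $\varphi$. Every point $(v,\eta)$ of $N_{\psi,\zeta}(w,\xi)$ other than $(w,\xi)$ then has $\neg\varphi\in v$ and $\eta\ge\zeta$, so the induction hypothesis gives $(v,\eta)\not\models\varphi$. Choosing the lower cut-off $\zeta$ in the neighbourhood base is exactly what is meant to stop points at low levels, where truth and membership disagree, from breaking the lemma. The point that needs the most care is that these truncated neighbourhoods still generate a topology whose derived-set operator matches the intended relational one at high levels. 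If this causes trouble, the fallback is to replace $\omega^2$ by a larger ordinal and let the levels $\eta$ range over a final segment only.
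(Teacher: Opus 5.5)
\textbf{There is a genuine gap in the completeness half.} The failure is exactly at the point you flag as needing care, and it is not repairable.

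\emph{The basic sets are not a neighbourhood base.} Take $(v,\zeta)\in N_{\psi,\zeta}(w,\xi)$ with $\lozenge\top\in v$. Every basic neighbourhood $N_{\chi,\zeta'}(v,\zeta)$ has $\zeta'<\zeta$. It therefore contains some $(u,\zeta')$ with $vRu$, and this point lies outside $N_{\psi,\zeta}(w,\xi)$. So the claim that every point of a basic set has a basic neighbourhood inside it is false.

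\emph{The cut-off idea cannot be rescued.} In a scattered space every non-empty open set contains a point that is isolated in the whole space. Hence no open neighbourhood of $(w,\xi)$ can consist only of points where truth and membership agree, which is exactly what $\zeta$ was meant to ensure. Passing to a larger ordinal or a final segment does not help, because the obstruction is local.

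\emph{A concrete failure of the truth lemma.} Repair the base by making the cut-off strict, $\zeta<\eta<\xi$. This gives a genuine scattered topology, but now every successor-level point $(v,\eta+1)$ is isolated, so every $\Box$-formula holds there. Take $w\ni\Box(\Box q\to p)\wedge\lozenge\neg p$. This is satisfiable: use the root of the chain $a<b<c$ with $p$ true only at $c$ and $q$ true nowhere. Pick $v$ with $wRv$ and $\neg p\in v$.
\begin{itemize}
\item For limit $\xi$, every basic neighbourhood $N_{\psi,\zeta}(w,\xi)$ contains $(v,\zeta+1)$, since $\psi\in v$ automatically. That point satisfies $\neg p\wedge\Box q$. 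Hence $(w,\xi)\models\lozenge(\neg p\wedge\Box q)$, although $\Box(\Box q\to p)\in w$.
\item For successor $\xi$, the point $(w,\xi)$ is itself isolated, so $\Box\bot$ breaks the lemma whenever $\lozenge\top\in w$.
\end{itemize}
So the truth lemma fails at every level. Separately, $(w_0,\omega^2)$ is not a point of $W\times\omega^2$.

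\textbf{The missing idea.} You need a mechanism in which $\Box\varphi$ holds at a point while tolerating failures of $\varphi$ nearby. You also need a model in which truth is correct at every point, the isolated ones included. A canonical model of maximal consistent sets copied across ordinal levels cannot provide the second, since isolated points carry arbitrary $v$. The paper only cites Esakia's theorem, but Section~\ref{sec:bouquet}, generalizing the $\omega$-bouquets of Aguilera and Fern\'andez-Duque, shows the working route:
\begin{itemize}
\item Build the model recursively as a lottery sum of models of smaller consistent sets $\Delta_\xi$ (Lemma~\ref{lemma:lottery}), so every node genuinely satisfies its own set.
\item Put a filter topology on the tree, in which $\Box\varphi$ holds at a node iff $\varphi\wedge\Box\varphi$ holds on a co-bounded set of its immediate successors (Proposition~\ref{prop:kripke-topology-eq}).
\end{itemize}
These spaces are scattered, and they give strong completeness.

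\textbf{A smaller point in soundness.} The point witnessing $y\in d(A)$ could be $x$ itself when $x\in A$, since scattered spaces need not be $T_1$. Fix this by choosing $y$ isolated in $U\cap(A\cup\{x\})$. This $y$ cannot be $x$, because $x\in d(A)$.
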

Esakia further showed that there exists a countable scattered space $\mathfrak{X}$ such that $Log(\mathfrak{X})=\mathsf{GL}$.
Moreover, \cite{aguilera2017strong} strengthened results of Esakia and Abashidze-–Blass by proving:
\begin{thm}\cite{aguilera2017strong}\label{ag strong}
	$\mathsf{GL}$ is strongly complete with respect to any ordinal space $(\Theta,\mathcal{I}_1)$ for any $\omega^\omega<\Theta$.
\end{thm}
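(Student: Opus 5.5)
We plan to prove Theorem~\ref{ag strong} in two stages. First, we realise any $\mathsf{GL}$-consistent set $\Gamma$ in a countable scattered space of rank at most $\omega$, an "$\omega$-bouquet" of finite trees. Second, we transfer this model to the ordinal $\omega^\omega$ with the interval topology by a $d$-map. Since $[0,\omega^\omega]$ is clopen in $(\Theta,\mathcal{I}_1)$ whenever $\omega^\omega<\Theta$, and $\lbr\lozenge\varphi\rbr$ is computed locally, it suffices to satisfy $\Gamma$ at the point $\omega^\omega$ of $[0,\omega^\omega]$. We then extend the valuation by $\emptyset$ on $(\omega^\omega,\Theta)$.

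\emph{Stage 1 (the bouquet).} Extend $\Gamma$ to a maximal consistent set $\Phi$. Enumerate the formulas $\{\varphi_k:\lozenge\varphi_k\in\Phi\}$ so that each occurs infinitely often, and let $\psi_0,\psi_1,\dots$ enumerate $\{\psi:\Box\psi\in\Phi\}$. For each $n$ put
\[
B_n=\varphi_n\wedge\bigwedge_{i\le n}(\psi_i\wedge\Box\psi_i).
\]
Using $\mathbf{K}$ and transitivity ($\Box\psi\to\Box\Box\psi$ is a theorem of $\mathsf{GL}$), we have $\Phi\vdash\lozenge B_n$, so $B_n$ is consistent. By Segerberg's theorem, $B_n$ holds at the root $r_n$ of some finite irreflexive tree model $T_n$.

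The space $X$ is the disjoint union of the $T_n$ (each with its upset topology) together with a new point $x$. The basic neighbourhoods of $x$ are $\{x\}\cup\bigcup_{m\ge N}T_m$. We set $x\in\lbr p\rbr$ iff $p\in\Phi$. This space is scattered, with $\rho(x)=\omega$ or less.

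We then prove by induction on $\varphi$ that truth inside each $T_n$ is unchanged, and that $x\models\varphi$ iff $\varphi\in\Phi$. The only nontrivial case is $\lozenge\varphi$. If $\lozenge\varphi\in\Phi$, then $\varphi=\varphi_n$ for infinitely many $n$, so $\varphi$ holds at infinitely many $r_n$, and hence $x\in d\lbr\varphi\rbr$. If $\lozenge\varphi\notin\Phi$, then $\Box\neg\varphi\in\Phi$, say $\neg\varphi=\psi_i$. For every $n\ge i$, the root $r_n$ satisfies $\neg\varphi\wedge\Box\neg\varphi$, so $\neg\varphi$ holds on all of $T_n$. Hence a neighbourhood of $x$ misses $\lbr\varphi\rbr$.

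\emph{Stage 2 (transfer).} Recall that a $d$-map (continuous, open, and pointwise discrete) $f:Y\to X$ satisfies $f^{-1}(d A)=d f^{-1}(A)$. Hence pulling back the valuation gives $Y,y\models\varphi$ iff $X,f(y)\models\varphi$. We define $f:[0,\omega^\omega]\to X$ as follows:
\begin{itemize}
\item $f(\omega^\omega)=x$;
\item the block $[0,\omega^{1}]$ and each interval $(\omega^{n},\omega^{n+1}]$, which are clopen and homeomorphic to $\omega^{n+1}+1$, are sent onto $T_n$, possibly after repeating a tree on several blocks.
\end{itemize}
The basic neighbourhoods of $\omega^\omega$ correspond exactly to the tails $\bigcup_{m\ge N}T_m$, so $f$ is a $d$-map at $\omega^\omega$, provided each block map is a $d$-map sending the top point to the root.

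\emph{Main obstacle.} The key lemma is: for a finite irreflexive tree $T$ of height $h$, there is a $d$-map from $(\omega^{h}+1,\mathcal{I}_1)$ onto $T$ sending $\omega^h$ to the root. We will prove it by induction on $h$, in the Abashidze--Blass style. Write $\omega^h$ as the limit of $\omega^{h-1}\cdot k$. Cut $[0,\omega^h)$ into the consecutive clopen blocks $(\omega^{h-1}\cdot k,\omega^{h-1}\cdot(k+1)]$, each a copy of $\omega^{h-1}+1$. Assign these blocks cyclically to the immediate subtrees $T_s$, $s\in\mathrm{Suc}(\root(T))$, so that every subtree receives cofinally many blocks. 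Map each block to its subtree by the induction hypothesis. A subtree of smaller height $h'$ is handled by first mapping $\omega^{h-1}+1$ onto $\omega^{h'}+1$ by an obvious $d$-map that collapses the ranks.

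The delicate point is openness at $\omega^h$: every neighbourhood must hit every child subtree, which the cyclic assignment guarantees. Pointwise discreteness and continuity are then routine to check. Finally, the block sent to $T_n$ can be taken to be $[0,\omega^{h}]$ with $h$ the height of $T_n$. If $h$ exceeds $n+1$, we re-index, letting the sequence of blocks $(\omega^{m},\omega^{m+1}]$ run through the trees $T_n$ with repetitions allowed. Repetitions do not affect the truth lemma, since each $\varphi_k$ still occurs infinitely often and each $\psi_i$ is eventually enforced.
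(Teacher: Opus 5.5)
\textbf{Gap: Stage 2 ignores that $d$-maps preserve rank.} Your Stage 1 is correct. It is essentially the $\lambda=\aleph_0$ instance of the lottery-sum construction in Section~\ref{sec:bouquet}: finite trees under a new root, with the cofinite filter there. Note that the paper does not prove this theorem itself; it quotes it from \cite{aguilera2017strong}.

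From $f^{-1}(dA)=d f^{-1}(A)$ you get $f^{-1}(d^{k}X)=d^{k}Y$, hence $\rho_Y(y)=\rho_X(f(y))$. This has three consequences.
\begin{itemize}
\item The ``obvious $d$-map that collapses the ranks'' from $\omega^{h-1}+1$ onto $\omega^{h'}+1$ with $h'<h-1$ does not exist: the point $\omega^{h-1}$ has rank $h-1$, and nothing in the target does.
\item A fixed block $(\omega^m,\omega^{m+1}]$ can only be sent, top to root, onto a tree of height exactly $m+1$. Re-indexing with repetitions does not help. Take $\Gamma=\{\lozenge p,\Box(p\to\Box\bot)\}\cup\{\lozenge^k\top:k<\omega\}$. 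Every $T_n$ with $\varphi_n=p$ and $n$ large is then a single point, which no block $(\omega^m,\omega^{m+1}]$ can cover. So your claim that ``each $\varphi_k$ still occurs infinitely often'' fails, and nothing guarantees $p$ cofinally below $\omega^\omega$.
\item The reduction to the point $\omega^\omega$ cannot work whenever $\Box^k\bot\in\Phi$ for some $k$, for instance $\Gamma=\{\lozenge\top,\Box\Box\bot\}$. Indeed $\rho_{\mathcal{I}_1}(\omega^\omega)=\omega$, while your $x$ then has finite rank. If $\Box\bot\in\Phi$, your list of $\varphi_k$ is even empty.
\end{itemize}

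\textbf{Repair: let the block lengths follow the trees.} In the key lemma, let $s_0,s_1,\dots$ cycle through the children of the root, with $h_k$ the height of $T_{s_k}$. Cut $[0,\omega^h)$ into consecutive clopen intervals $I_0=[0,\omega^{h_0}]$ and $I_{k+1}=(\max I_k,\max I_k+\omega^{h_{k+1}}]$. Each $I_k$ is a copy of $\omega^{h_k}+1$, or a single isolated point when $h_k=0$, and $I_k$ is sent onto $T_{s_k}$ by the induction hypothesis. Some child has height exactly $h-1$ and recurs infinitely often, while all heights are at most $h-1$, so $\sup_k\max I_k=\omega^h$. Openness at $\omega^h$ then follows from the cyclic assignment, as you said.

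At the top level, do the same with $T_0,T_1,T_2,\dots$, each used exactly once and in order, so that preimages of tails are tails and continuity at the top point holds. There are three cases.
\begin{itemize}
\item If the heights $h_n$ are unbounded, the blocks exhaust $[0,\omega^\omega)$.
\item If $H=\limsup_n h_n<\omega$, discard the finitely many $T_n$ of height $>H$; the rest together with $x$ is an open subspace, so truth at $x$ is unchanged. Then satisfy $\Gamma$ at $\omega^{H+1}$, whose rank $H+1$ is exactly that of $x$, using that $[0,\omega^{H+1}]$ is clopen in $\Theta$.
\item If $\Box\bot\in\Phi$, use the isolated point $0$.
\end{itemize}
With these changes your bouquet-plus-$d$-map argument goes through.
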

This theorem holds for countable languages. In the remainder of this paper we investigate the strong completeness of $\mathsf{GL}(\mathbb{P})$ for uncountable sets of propositional variables $\mathbb{P}$ with respect to ordinal spaces.
To this end, we recall some basic definitions and facts.

Let $\mathfrak{X}=(X,\tau)$ be a topological space and $X'\subseteq X$. The structure $\mathfrak{X}'=(X',\tau')$ is called a \textit{subspace} of $\mathfrak{X}$ if $\tau'=\{O\cap X' \;  : \;  O\in\tau\}$.
If $X'$ is an open subset of $X$, then $\mathfrak{X}'$ is an \textit{open subspace} of $\mathfrak{X}$.
Similarly, given a model $\mathfrak{M}=(X,\tau,\lbr\cdot\rbr)$ and a subset $X'\subseteq X$, the structure $\mathfrak{M}'=(X',\tau,\lbr\cdot\rbr')$ is a \textit{submodel} of $\mathfrak{M}$ if $\lbr p\rbr'=\lbr p\rbr\cap X'$, for each $p\in\mathbb{P}$.
If $\mathfrak{M}'$ is an open submodel of $\mathfrak{M}$, then by induction on the complexity of formulas, one can verify that for any $\varphi$ and any $x\in X'$ we have $\mathfrak{M},x\models\varphi$ iff $\mathfrak{M}',x\models\varphi$.
Consequently, if $\mathfrak{X}\models\varphi$, then $\mathfrak{X}'\models\varphi$.
In particular, for an ordinal space $(\Theta,\mathcal{I}_1)$ if $\alpha<\Theta$, then the interval $[0,\alpha]$ forms an open subspace $(\alpha,\mathcal{I}_1)$.

For an ordinal $\Theta$ and $\alpha,\beta<\Theta$, and a model $\mathfrak{M}$ based on $\Theta$, we write $\mathfrak{M},[\alpha,\beta]\models \varphi$, to mean that $\mathfrak{M},\gamma\models\varphi$ for any $\gamma\in[\alpha,\beta]$. Similar notation will be used for open and half-open intervals.

\subsection{Generalized Icard Topology}\label{icard sec}
Theorem~\ref{ag strong} is a specific version of Corollary 6.14 in \cite{aguilera2017strong}, where the strong completeness of $\mathsf{GL}$ is proved for  any ordinal $\Theta> e^\lambda\omega$ equipped with some general topology named generalized Icard topology.
In this subsection we review the definition of generalized Icard topologies over any scattered space, and some of its properties.

To ease notation, in some cases, the interval $[0,\alpha]$ will be denoted by $(-1,\alpha]$.

\begin{dfn}
	Let $\mathfrak{X}=(X,\tau)$ be a scattered topological space with $\rho_\tau\mathfrak{X}=\Theta$.
	For any $\lambda\in\mathrm{Ord}$, the \textit{generalized Icard topology} on $X$, denoted by $\tau_{+\lambda}$, is the least topology on $X$ containing $\tau$ and all sets of the form
	$$(\alpha,\beta]_\xi^\mathfrak{X} =\{ x\in X \; : \;  \alpha<\ell^\xi\rho_\tau x\leq\beta\},$$
	for some $-1\leq \alpha<\beta\leq \Theta$ and some $\xi<\lambda$.
	This space is denoted by $\mathfrak{X}_{+\lambda}=(X,\tau_{+\lambda})$.
\end{dfn}

Specifically, for ordinal space $(\Theta,\mathcal{I}_0)$ and $\lambda\in\mathrm{Ord}$, the generalized Icard topology  $(\mathcal{I}_0)_{+\lambda}$ is the least topology on $\Theta$  generated by all sets of the form
$$(\alpha,\beta]_\xi =\{ \theta \; : \;  \alpha<\ell^\xi\theta\leq\beta\},$$
for some $-1\leq \alpha<\beta\leq \Theta$ and some $\xi<\lambda$. We denote this space simply by $\Theta_\lambda =(\Theta,\mathcal{I}_\lambda)$ and the rank of it by $\rho_\lambda$.
Then, it is easy to see that $(\mathcal{I}_0)_{+1}= \mathcal{I}_1$ is the interval topology.

\begin{lem}\cite[Lemma~5.5]{aguilera2017strong} 
	Let $\mathfrak{X}$ be a scattered space and $\lambda\in\mathrm{Ord}$. Then $\rho_\lambda = \ell^\xi\rho_\mathfrak{X}$.
	Specifically, for any ordinals $\Theta$ and any $\theta<\Theta$, we have $\rho_\lambda\theta=\ell^\lambda \theta$.
\end{lem}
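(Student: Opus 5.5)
We prove the general statement $\rho_{\tau_{+\lambda}}x=\ell^\lambda\rho_\tau x$ for every scattered $\mathfrak{X}=(X,\tau)$; the ordinal case then follows from $\rho_{\mathcal{I}_0}\theta=\theta$ (Example~\ref{exam ord}). The argument is a transfinite induction on $\lambda$. The case $\lambda=0$ is trivial, since $\tau_{+0}=\tau$ and $\ell^0$ is the identity.

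\textbf{The case $\lambda=1$.} We show by induction on $\eta$ that $x\in d^\eta_{+1}X$ iff $\ell\rho_\tau x\ge\eta$. Write $\rho=\rho_\tau x=\alpha+\omega^\beta$, so that $\beta=\ell\rho$.

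We first record a basic fact about scattered spaces. Every $\tau$-neighbourhood of $x$ contains points of every $\tau$-rank $<\rho$. Moreover, there is a $\tau$-open $U\ni x$ with $U\cap d^\rho X=\{x\}$, so every other point of $U$ has rank $<\rho$.

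\emph{Upper bound.} Intersect such a $U$ with the Icard set $(\alpha,\rho]_0$. Every $y\neq x$ in this neighbourhood has $\alpha<\rho_\tau y<\rho$. Hence $\rho_\tau y=\alpha+\delta$ with $0<\delta<\omega^\beta$. By Lemma~\ref{l lem}(ii) we get $\ell\rho_\tau y=\ell\delta<\beta$, and by the inductive hypothesis $y\notin d^{\beta}_{+1}X$. Thus $x\notin d^{\beta+1}_{+1}X$.

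\emph{Lower bound.} Fix $\eta<\beta$ and a basic $\tau_{+1}$-neighbourhood $V\cap(\gamma,\delta]_0$ of $x$, where $\gamma<\rho\le\delta$. Since $\beta>\eta$, there is an ordinal of the form $\zeta=\alpha'+\omega^\eta$ in $(\gamma,\rho)$. The neighbourhood contains a point $y\ne x$ of $\tau$-rank $\zeta$. Then $\ell\rho_\tau y=\eta$, so $y\in d^\eta_{+1}X$ by the inductive hypothesis. Hence $x\in d^{\eta+1}_{+1}X$ for all $\eta<\beta$, and limit stages are immediate.

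\textbf{Successor step.} Assume the statement for $\lambda$. We show $\tau_{+(\lambda+1)}=(\tau_{+\lambda})_{+1}$. By the inductive hypothesis, $\rho_{\tau_{+\lambda}}=\ell^\lambda\rho_\tau$. Therefore the new generators $(\alpha,\beta]^{\mathfrak X}_\lambda$ of $\tau_{+(\lambda+1)}$ are exactly the sets $(\alpha,\beta]_0$ computed in $\mathfrak X_{+\lambda}$. Moreover, $\tau_{+\lambda}$ already contains all generators with index $\xi<\lambda$. Applying the case $\lambda=1$ to $\mathfrak{X}_{+\lambda}$ gives $\rho_{+(\lambda+1)}=\ell\,\ell^\lambda\rho_\tau=\ell^{\lambda+1}\rho_\tau$.

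\textbf{Limit step.} This is the main obstacle. Here $\tau_{+\lambda}$ is generated by $\bigcup_{\zeta<\lambda}\tau_{+\zeta}$, and by the inductive hypothesis the ranks there are $\ell^\zeta\rho_\tau x$. I would again run an induction on $\eta$, showing $x\in d^\eta_{+\lambda}X$ iff $\ell^\lambda\rho_\tau x\ge\eta$. Any basic neighbourhood only involves finitely many indices $\xi<\lambda$, so it is open in some $\tau_{+\zeta}$.

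Write $\lambda=\eta_0+\omega^{b}$. Lemma~\ref{l lem}(iii) provides $\sigma<\lambda$ such that $\ell^\zeta\rho_\tau x=e^{\omega^b}\ell^\lambda\rho_\tau x$ for all large $\zeta\in[\sigma,\lambda)$.

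\emph{Upper bound.} Use the $\zeta$-level isolating neighbourhood from the successor argument, for $\zeta\ge\sigma$. Combined with Lemma~\ref{left-ord}, which converts "$\ell^\zeta$-rank $<e^{\omega^b}\beta$" into "$\ell^\lambda$-rank $<\beta$", it shows that nearby points have $\ell^\lambda$-rank $<\ell^\lambda\rho_\tau x$.

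\emph{Lower bound.} Given a neighbourhood open in $\tau_{+\zeta}$ and $\eta<\ell^\lambda\rho_\tau x$, we must find inside it a point $y\neq x$ with $\ell^\lambda\rho_\tau y\ge\eta$. For this I would pick $y$ of $\tau$-rank $e^\lambda\eta$-like shifted into the right interval, i.e.\ of rank $\alpha'+e^\lambda(\eta)$ below $\rho_\tau x$. Then $\ell^\lambda\rho_\tau y=\eta$ by Lemma~\ref{left-ord} and Lemma~\ref{l lem}(ii). Such a $y$ lies in the neighbourhood because the lower-level Icard intervals containing $x$ also contain such ranks, again by the cofinality and normal-form bookkeeping of Lemma~\ref{l lem}(iii).
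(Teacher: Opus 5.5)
The paper does not prove this lemma; it imports it from \cite{aguilera2017strong}, so I judge your argument on its own. The following parts are sound:
the case $\lambda=1$ (your basic fact does hold in any scattered space, since an isolated point of the open piece $V\cap d^\zeta X$ of $d^\zeta X$ has rank exactly $\zeta$ and is not $x$);
the successor step via $\tau_{+(\lambda+1)}=(\tau_{+\lambda})_{+1}$;
and the upper bound at limit $\lambda$.
For that upper bound you should take $\zeta\ge\max(\sigma,\eta_0)$. Then $\zeta+\omega^b=\lambda$, hence $\ell^\lambda=\ell^{\omega^b}\circ\ell^\zeta$, and this identity is what lets Lemma~\ref{left-ord} turn $\ell^\zeta\rho_\tau y<e^{\omega^b}\beta$ into $\ell^\lambda\rho_\tau y<\beta$.

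The genuine gap is the lower bound at limit $\lambda$: your witnesses of $\tau$-rank $\alpha'+e^\lambda\eta$ need not lie in the given neighbourhood. For $0<\xi<\lambda$ and $\eta>0$, Lemma~\ref{l lem}(ii) and Lemma~\ref{left-ord} give $\ell^\xi(\alpha'+e^\lambda\eta)=\ell^\xi e^\lambda\eta=e^{-\xi+\lambda}\eta$, which does not depend on $\alpha'$. So the shift only helps with the index $\xi=0$. Any factor $(\gamma_\xi,\ell^\xi\rho_\tau x]_\xi$ with $\gamma_\xi\ge e^{-\xi+\lambda}\eta$ excludes all your candidates.

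Here is a concrete failure. Take $(\varepsilon_1+1,\mathcal I_0)$, $x=\varepsilon_1$ and $\lambda=\omega$.
\begin{itemize}
\item Since $\ell^n\varepsilon_1=\varepsilon_1$ for all $n$ and $e^\omega(1+\nu)=\varepsilon_\nu$, we get $\ell^\omega\varepsilon_1=2$.
\item For $\eta=1$, every candidate $\alpha'+e^\omega1=\alpha'+\varepsilon_0$ has end logarithm $\varepsilon_0$.
\item Hence every candidate lies outside the neighbourhood $(\varepsilon_0,\varepsilon_1]_1$ of $x$.
\end{itemize}
Suitable points do exist, e.g.\ $\omega^{\omega^{\varepsilon_0\cdot 2}}$, whose $\ell^\omega$-value is $1$, but they are not of your form. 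So the appeal to ``normal-form bookkeeping'' is covering a false claim, not merely an omitted routine check.

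The repair is to stay at level $\zeta$ instead of descending to $\tau$-ranks, exactly as in your upper bound.
\begin{enumerate}
\item Given a neighbourhood $N$ of $x$ that is open in some $\tau_{+\zeta}$, enlarge $\zeta$ so that $\zeta\ge\max(\sigma,\eta_0)$. This is allowed because the topologies $\tau_{+\zeta}$ increase with $\zeta$.
\item By the induction hypothesis on $\lambda$, $\rho_{\tau_{+\zeta}}x=\ell^\zeta\rho_\tau x=e^{\omega^b}\beta$, where $\beta=\ell^\lambda\rho_\tau x$. By normality, $e^{\omega^b}\eta<e^{\omega^b}\beta$.
\item Your basic fact, applied to the scattered space $\mathfrak X_{+\zeta}$, gives $y\in N\setminus\{x\}$ with $\ell^\zeta\rho_\tau y=e^{\omega^b}\eta$.
\item Therefore $\ell^\lambda\rho_\tau y=\ell^{\omega^b}e^{\omega^b}\eta=\eta$, and $y\in d^\eta_{+\lambda}X$ by the inner induction on $\eta$.
\end{enumerate}
This handles all finitely many Icard constraints of $N$ at once, because they are absorbed into the $\tau_{+\zeta}$-openness of $N$.
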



It would be beneficial to know how one can consider a basic open neighbourhood of any ordinal $\alpha$ in Icard topologies.

For any finite subset $A\subseteq\mathrm{Ord}$, a function $r:A\to \mathrm{Ord}$ is called a \emph{simple function}.
Let $\alpha$ be any ordinal and $r$ be a simple function such that $r(\xi)< \ell^\xi\alpha$ for all $\xi\in \mathrm{dom}(r)$. Then for any $x\in X$ of rank $\alpha$ put $B_r^\mathfrak{X}(x) = \bigcap_{\xi\in \mathrm{dom}(r)} (r(\xi),\ell^\xi\alpha]_\xi^\mathfrak{X}.$

Note that if $\lambda> \max(\mathrm{dom}(r))$, then $B_r^\mathfrak{X}(x)$ is $\tau_{+\lambda}$-open.

\begin{lem}\cite[Lemma~5.11]{aguilera2017strong}
	Let $\mathfrak{X}=(X,\tau)$ be a scattered space and $0<\lambda$ an ordinal.
	Then for any $x\in X$ with $\rho_\tau(x)=\xi$ and $0<\ell^\lambda\xi$, the sets of the form $U\cap B_r^\mathfrak{X}(x)$ with $x\in U\in\tau$ and $\mathrm{dom}(r)\subseteq \lambda$ form a neighbourhood base for $x$ in $\tau_{+\lambda}$.
	Furthermore, for $\mathfrak{X}=(\Theta,\mathcal{I}_0)$, we can take $U=\Theta$.
\end{lem}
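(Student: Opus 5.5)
Since $\tau_{+\lambda}$ is by definition the topology generated by $\tau$ together with the sets $(\alpha,\beta]_\zeta^\mathfrak{X}$ for $\zeta<\lambda$, these sets form a subbase. So finite intersections $U\cap\bigcap_{i<n}(\alpha_i,\beta_i]_{\zeta_i}^\mathfrak{X}$ with $U\in\tau$ and $\zeta_i<\lambda$ form a base. (I write $\zeta$ for the Icard index to avoid a clash with $\xi=\rho_\tau(x)$.) The plan has two parts. First, each $U\cap B^\mathfrak{X}_r(x)$ with $\mathrm{dom}(r)\subseteq\lambda$ is a $\tau_{+\lambda}$-open set containing $x$. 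Second, every basic open set containing $x$ contains one of this form with the same $U$.

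\textbf{Part one.} For $\zeta\in\mathrm{dom}(r)$ we have $r(\zeta)<\ell^\zeta\xi\le\Theta$, so $(r(\zeta),\ell^\zeta\xi]^\mathfrak{X}_\zeta$ is a subbasic set. It contains $x$, because $\rho_\tau x=\xi$ gives $r(\zeta)<\ell^\zeta\rho_\tau x\le\ell^\zeta\xi$. Hence $B^\mathfrak{X}_r(x)$ is a finite intersection of subbasic open sets, and $U\cap B^\mathfrak{X}_r(x)$ is open and contains $x$.

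\textbf{Part two.} Let $x\in U\cap\bigcap_{i<n}(\alpha_i,\beta_i]^\mathfrak{X}_{\zeta_i}$. Then $\alpha_i<\ell^{\zeta_i}\xi\le\beta_i$ for each $i$. Put $\mathrm{dom}(r)=\{\zeta_i:i<n\}$ and let $r(\zeta)$ be the maximum of the $\alpha_i$ with $\zeta_i=\zeta$. The case $\alpha_i=-1$ needs care, since $r$ must take ordinal values; there I set the contribution to $0$ instead. This is the one place the hypothesis $0<\ell^\lambda\xi$ is used. By the composition law $\ell^{\zeta+\eta}=\ell^\eta\circ\ell^\zeta$ and $\ell 0=0$ (iterated through limits), $\ell^\zeta\xi=0$ for some $\zeta<\lambda$ would force $\ell^\lambda\xi=0$. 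Hence $\ell^\zeta\xi>0$ for all $\zeta<\lambda$, so $r(\zeta)<\ell^\zeta\xi$ still holds and $B^\mathfrak{X}_r(x)$ is defined.

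Now let $y\in B^\mathfrak{X}_r(x)$. For each $i$,
\[\alpha_i\le r(\zeta_i)<\ell^{\zeta_i}\rho_\tau y\le\ell^{\zeta_i}\xi\le\beta_i,\]
where the first inequality holds trivially when $\alpha_i=-1$. So $y\in(\alpha_i,\beta_i]^\mathfrak{X}_{\zeta_i}$ for every $i$, which gives $U\cap B^\mathfrak{X}_r(x)\subseteq U\cap\bigcap_{i<n}(\alpha_i,\beta_i]^\mathfrak{X}_{\zeta_i}$, as required.

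\textbf{The case $\mathfrak{X}=(\Theta,\mathcal{I}_0)$.} Here $\rho_{\mathcal{I}_0}$ is the identity and $\ell^0$ is the identity, so the generators $[0,\beta]$ of $\mathcal{I}_0$ are exactly the sets $(-1,\beta]_0$. Since $\lambda>0$, the index $0$ is available in $\mathrm{dom}(r)$. Any $U\in\mathcal{I}_0$ containing $x$ contains some $[0,\beta]$ with $x\le\beta$. That set can be absorbed into the Icard part, and the argument above applies with $U=\Theta$.

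The main obstacle is bookkeeping rather than substance: handling the lower bound $-1$, where the hypothesis $\ell^\lambda\xi>0$ is needed, and confirming the monotonicity fact $\ell^\zeta\xi=0\Rightarrow\ell^\lambda\xi=0$ for $\zeta<\lambda$ from the defining properties of the hyperlogarithms.
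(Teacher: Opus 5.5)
Your argument is correct. The paper gives no proof of this lemma (it is quoted from Aguilera--Fern\'andez-Duque), and your approach is the natural direct proof: reduce a basic $\tau_{+\lambda}$-neighbourhood $U\cap\bigcap_{i<n}(\alpha_i,\beta_i]_{\zeta_i}$ to some $U\cap B_r(x)$ by taking the maximum lower endpoint at each index, and use $0<\ell^\lambda\xi$ to handle the endpoints $-1$. Two facts you leave implicit are both standard: $\ell^\zeta\xi\le\xi<\Theta$, and $\ell^\eta 0=0$, which holds because hyperlogarithms are initial functions.
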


\subsection{The Logic $\gl.3$}

\begin{dfn}
  The logic $\gl.3(\mathbb P)$ is the minimal extension of $\gl(\mathbb P)$ that contains the axiom $.3$, where $.3$ is:
  \[
    \Box(\Box\varphi \to \psi)\lor\Box(\Box\psi\land\psi \to \varphi).
  \]
  When $\mathbb{P}$ is countable, we write $\gl.3$ instead of $\gl.3(\mathbb P)$.
\end{dfn}

\begin{dfn}
  If $X$ is a set, we say that $R\subset X\times X$ is a \emph{strict linear order}, if $R$ is transitive, antisymmetric, irreflexive and total, where by the latter we mean that for all $x,y\in X$ either $xRy$ or $yRx$.
\end{dfn}

Note that if $X$ is finite, then $(X,R)$ is order-isomorphic to $(n,<)$, where $n=\set{0,\dots,n-1}$ is the size of $X$, ordered by $<$ in the usual sense.

\begin{fact}[See e.g.~\cite{Boolos_1994}]
  $\gl.3$ is sound and complete with respect to the class of finite strict linear orders.
\end{fact}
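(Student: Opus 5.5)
The plan is the usual soundness-plus-finite-canonical-model argument. \emph{Soundness} is a frame check. On any finite strict linear order $(n,<)$ the relation is transitive and converse well-founded, so $\mathbf{K}$ and L\"ob hold and the rules preserve validity, exactly as in Segerberg's theorem above. For $.3$, suppose $\Box(\Box\varphi\to\psi)$ fails at $x$ and $\Box(\Box\psi\wedge\psi\to\varphi)$ also fails at $x$. Then there are $y,z>x$ with
\[
y\models\Box\varphi\wedge\neg\psi \quad\text{and}\quad z\models\Box\psi\wedge\psi\wedge\neg\varphi.
\]
By totality $y<z$, $y=z$ or $z<y$, and each case is contradictory:
\begin{itemize}
\item if $y<z$, then $y\models\Box\varphi$ gives $z\models\varphi$;
\item if $y=z$, then $y\models\psi$ and $y\models\neg\psi$;
\item if $z<y$, then $z\models\Box\psi$ gives $y\models\psi$.
\end{itemize}

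For \emph{completeness}, suppose $\gl.3\nvdash\varphi$. Let $\Sigma$ be the closure of $\{\varphi\}$ under subformulas and single negations; it is finite. Let $W$ be the set of maximal $\gl.3$-consistent subsets of $\Sigma$. Define $wRv$ iff:
\begin{itemize}
\item for every $\Box\psi\in\Sigma$, if $\Box\psi\in w$ then $\psi,\Box\psi\in v$; and
\item some $\Box\chi\in v\setminus w$.
\end{itemize}
This $R$ is transitive and irreflexive on a finite set, hence converse well-founded. The truth lemma for $\Diamond$ goes through with L\"ob in the standard way. If $\Diamond\psi\in w$, the set $\{\chi,\Box\chi:\Box\chi\in w\}\cup\{\psi,\Box\neg\psi\}$ is consistent, and the $\Box\neg\psi$ supplies the strictness clause. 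Take $w_0\ni\neg\varphi$ and restrict to the subframe generated by $w_0$.

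Next I use $.3$ to show that this generated frame is a \emph{finite sum of antichains}. Instantiate $.3$ with conjunctions of the $\Sigma$-contents of two successors $u,v$ of $w_0$. This should show that either $uRv$, or $vRu$, or $u$ and $v$ contain the same boxed formulas of $\Sigma$. The last relation is an equivalence whose classes are $R$-incomparable and are linearly ordered by $R$. The root $w_0$ forms a singleton level.

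The remaining step, which I expect to be the main obstacle, is to turn this sum of antichains into a genuine finite strict linear order refuting $\varphi$. Two elements $a\neq a'$ in one level share all boxed formulas. If I simply list a level as a chain, the truth lemma fails: the $\Diamond$-formulas true at $a$ must not pick up $a'$.

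My first attempt is to \emph{unravel} from the top down. Build a chain $x_0<x_1<\dots<x_{N}$ together with a labelling $f$ into $W$ with $f(x_0)=w_0$. For each $x_i$ and each $\Diamond\psi\in f(x_i)$, some later $x_j$ must carry a label satisfying $\psi$. Conversely, every later label must be an $R$-successor of $f(x_i)$, or else have the same boxed content as some $R$-successor of $f(x_i)$, which is what makes $\Diamond$ sound. I would choose the chain by picking, at each level, a single representative. I would then check, by induction on $\psi$ from the top of the chain, that $x_i\models\psi$ iff $\psi\in f(x_i)$.

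To secure the witnesses I would go back to the $.3$ argument and strengthen it. The claim is that every $\Diamond\psi\in\Sigma$ holding at the root has a witness lying along one maximal chain, by an inductive choice that prefers $R$-maximal witnesses. If that fails, the fallback is Segerberg's route. In that route the filtration relation is refined so that clusters are collapsed, and the resulting equivalent model is checked directly to be linear.
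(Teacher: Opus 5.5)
The paper only cites Boolos for this Fact, so I am comparing your proposal with the standard argument. Your soundness check is correct. The $\Sigma$-filtration with the strictness clause also correctly gives a finite transitive irreflexive countermodel, but that is only the $\gl$ part. The gap is at the step where $.3$ has to be used.

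\textbf{Your antichain claim is false.} You claim that any two points $u,v$ of the subframe generated by $w_0$ satisfy $uRv$, $vRu$, or have the same boxed $\Sigma$-formulas. Take $\varphi=\Box p\vee\Box q$. Let $w_0,u,v$ be the maximal consistent subsets of $\Sigma$ containing $\{\neg\Box p,\neg\Box q,p,q\}$, $\{\Box p,\neg\Box q,p,\neg q\}$ and $\{\neg\Box p,\Box q,\neg p,q\}$ respectively. Each is true at the root of a two-element chain. Then $w_0Ru$ and $w_0Rv$ hold: the first clause is vacuous, and $\Box p\in u\setminus w_0$, $\Box q\in v\setminus w_0$. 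But neither $uRv$ nor $vRu$ holds, and the boxed contents differ. The reason is that $R$ only looks at boxed $\Sigma$-formulas. So $w_0Ru$ and $w_0Rv$ do not make $\bigwedge w_0\wedge\Diamond\bigwedge u\wedge\Diamond\bigwedge v$ consistent, and no instance of $.3$ at $w_0$ says anything about $u$ and $v$.

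\textbf{The rest does not close the gap.} The ``sum of antichains'' does not exist. What follows is a list of intentions for exactly the step that carries the content: one representative per level, the unravelling, the unproved ``witnesses along one maximal chain'', and the Segerberg fallback. Two specific problems:
\begin{itemize}
\item Your unravelling condition (``same boxed content as some $R$-successor'') does not give $\psi\in f(x_j)$ from $\Box\psi\in f(x_i)$, so the $\Box$ case of the truth lemma can fail.
\item Preferring L\"ob-type witnesses does not help by itself. The sets containing $\{\Box p,\neg\Box q,\neg p,q\}$ and $\{\neg\Box p,\Box q,p,\neg q\}$ are such witnesses for $\neg\Box p$ and $\neg\Box q$ at $w_0$, and they are incomparable. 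A linear countermodel needs the single coherent witness $\{\Box p,\Box q,\neg p,\neg q\}$.
\end{itemize}

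\textbf{The missing idea} is a mechanism that forces witnesses to be chosen coherently. The standard one selects from the canonical model of $\gl.3$, where $.3$ really does give weak connectedness.
\begin{itemize}
\item Suppose $\Delta_1\neq\Delta_2$ are $R_c$-incomparable successors of $\Gamma$. Pick $\Box\alpha\in\Delta_1$ with $\neg\alpha\in\Delta_2$, $\Box\beta\in\Delta_2$ with $\neg\beta\in\Delta_1$, and $\gamma\in\Delta_1$ with $\neg\gamma\in\Delta_2$. Put $\delta=\beta\vee\neg\gamma$. Then the instance $\Box(\Box\alpha\to\delta)\vee\Box(\Box\delta\wedge\delta\to\alpha)$ fails at $\Gamma$, which is impossible.
\item Start from a maximal $\gl.3$-consistent $\Gamma_0\ni\neg\varphi$. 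For each selected $\Delta$ and each $\Box\psi\in\Sigma$ with $\neg\Box\psi\in\Delta$, use L\"ob to select some $\Delta'$ with $\Delta R_c\Delta'$ and $\neg\psi,\Box\psi\in\Delta'$.
\item Each such $\Delta'$ is $R_c$-irreflexive. The boxed $\Sigma$-content strictly grows along the selection, so the process stops.
\item All selected points are $R_c$-successors of $\Gamma_0$, hence pairwise comparable. So the selected set under $R_c$, minus a possible loop at $\Gamma_0$, is a finite strict linear order on which the truth lemma for $\Sigma$ holds.
\end{itemize}
A purely finite version is also possible. But then each new point must be the $\Sigma$-part of a consistent extension of $\bigwedge\Delta$ by the relevant diamond formulas, not merely an $R$-successor in your sense.
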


As with $\gl$, $\gl.3$ fails to be strongly complete with respect to its Kripke semantics, i.e., the class of finite strict linear orders. This is witnessed by the set $\Gamma_*$ from \eqref{eq:counterexample-intro}.

\section{Some Negative Results}\label{Negative Results}

As mentioned above, $\mathsf{GL}$ is not strongly complete with respect to Kripke semantics, but it is strongly complete with respect to scattered spaces. Specifically it is strongly complete with respect to $(\Theta, \mathcal{I}_\lambda)$ for $\lambda>0$ and $\Theta>e^\lambda\omega$,
and $(\Theta, {\tau_c}_{+\lambda})$ for $\lambda>0$ and $\Theta>\aleph_{e^\lambda\omega+1}$.
However, there are some negative results regarding strong completeness of $\mathsf{GL}$ with respect to some specific topological spaces.

\begin{prop}\label{g-delta}
	$\mathsf{GL}$ is not strongly complete with respect to any scattered space $\mathfrak{X}$ in which every $G_\delta$ set is open.
\end{prop}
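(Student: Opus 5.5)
We use the set $\Gamma_*=\{\lozenge p_0\}\cup\{\Box(p_i\to\lozenge p_{i+1}) : i\in\omega\}$ from \eqref{eq:counterexample-intro}. This is a countable set in the countable language $\mathbb{P}=\{p_i: i\in\omega\}$. It is finitely satisfiable in finite irreflexive trees (indeed in finite chains), hence $\mathsf{GL}$-consistent by soundness. We will show that $\Gamma_*$ has no model on a scattered space $\mathfrak{X}$ in which every $G_\delta$ set is open. This is exactly the failure of strong completeness.

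Suppose towards a contradiction that $\mathfrak{M}=(X,\tau,\lbr\cdot\rbr)$ is such a model and $x\in X$ satisfies $\Gamma_*$. First we unpack the $\Box$-formulas. $\mathfrak{M},x\models\Box\varphi$ means that some open neighbourhood $U$ of $x$ has $U\setminus\{x\}\subseteq\lbr\varphi\rbr$; this is the dual of $d$. So for each $i$ we get an open $U_i\ni x$ with $U_i\setminus\{x\}\subseteq \lbr p_i\to\lozenge p_{i+1}\rbr$. Now set $U=\bigcap_{i<\omega}U_i$. It is a $G_\delta$ set, hence open by hypothesis, and it contains $x$. On $U\setminus\{x\}$ every implication $p_i\to\lozenge p_{i+1}$ holds simultaneously. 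This uniformization is the key step, and the only place the $G_\delta$ hypothesis is used.

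Next we derive a contradiction from scatteredness. Let $A_i=\lbr p_i\rbr\cap (U\setminus\{x\})$ and $A=\bigcup_i A_i$. Since $x\models\lozenge p_0$ and $U$ is a neighbourhood of $x$, the set $A_0$ is nonempty, so $A\neq\emptyset$. Because $\mathfrak{X}$ is scattered, $A$ has an isolated point $y$, say $y\in A_i$. Then $y\in U\setminus\{x\}$ and $y\models p_i$, so $y\models\lozenge p_{i+1}$. Take an open $V\ni y$ with $V\cap A=\{y\}$ and shrink it to $V\cap U$.

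It remains to rule out $x$ as the point witnessing $\lozenge p_{i+1}$ at $y$, and this is the main obstacle. The cleanest fix is to first replace $U$ by $U\setminus\{x\}$, provided $\{x\}$ is closed. In a space where $G_\delta$ sets are open, though, $T_1$ need not hold. Instead we use that $d$ of a set in a scattered space is closed. Using $\lbr p_{i+1}\rbr\setminus\{x\}$, we reduce $y\in d(\lbr p_{i+1}\rbr)$ to $y\in d(\lbr p_{i+1}\rbr\setminus\{x\})$, unless every neighbourhood of $y$ contains $x$. In that degenerate case we argue by rank instead. Since $y$ is a limit point of a set of points distinct from it, and $x$ lies in every neighbourhood of $y$, we get $\rho(y)>\rho(x)$ or a contradiction with the minimality of rank. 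Concretely, we pick $y$ of minimal rank in $A$ rather than merely isolated, which avoids the issue.

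With the degenerate case handled, we may assume $y\in d(\lbr p_{i+1}\rbr\setminus\{x\})$. Then $V\cap U\setminus\{x,y\}$ meets $\lbr p_{i+1}\rbr$, so it meets $A_{i+1}\subseteq A$, contradicting $V\cap A=\{y\}$. Hence $\Gamma_*$ is unsatisfiable in $\mathfrak{X}$, and the proposition follows.
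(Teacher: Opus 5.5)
Your route is the standard one. The paper defers to Proposition~2.6 of Aguilera et al., and that argument likewise takes $\Gamma_*$, intersects the countably many witnessing neighbourhoods into an open $G_\delta$ set $U$, and then contradicts the well-foundedness of rank. Everything up to the choice of $y$ is fine.

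\textbf{The gap.} The degenerate case, which you yourself call the main obstacle, is asserted rather than proved. If every neighbourhood of $y$ contains $x$, you do get $\rho(x)<\rho(y)$, but that alone contradicts nothing: $x\notin A$, so minimality of $\rho(y)$ in $A$ is not violated. The missing ingredient is a basic fact about scattered spaces: every point $w$ has an open $V_w\ni w$ with $\rho(z)<\rho(w)$ for all $z\in V_w\setminus\{w\}$. This holds because $w\in d^{\rho(w)}X\setminus d(d^{\rho(w)}X)$, i.e.\ $w$ is isolated in $d^{\rho(w)}X$. This fact does three jobs in your argument:
\begin{enumerate}
\item it justifies $\rho(x)<\rho(y)$ in the first place;
\item it shows that a point of minimal rank in $A$ is isolated in $A$, which you use implicitly when you take $V$ with $V\cap A=\{y\}$;
\item applied to $x$ together with $x\models\lozenge p_0$, it yields a point of $A_0$ inside $U\cap V_x$ of rank $<\rho(x)<\rho(y)$, which is the contradiction you actually need.
\end{enumerate}

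\textbf{A cleaner repair.} Build this in from the start by setting $A_i=\lbr p_i\rbr\cap(U\cap V_x\setminus\{x\})$. Then every $y\in A$ has $\rho(y)<\rho(x)$, so $x\notin V_y$ and the degenerate case cannot arise. Take $y\in A$ of minimal rank and a witness $z\in(U\cap V_x\cap V_y)\setminus\{y\}$ for $y\models\lozenge p_{i+1}$. Then $z\neq x$, so $z\in A_{i+1}$ with $\rho(z)<\rho(y)$, a contradiction.

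Equivalently, choose $x_0\in\lbr p_0\rbr\cap U\cap V_x\setminus\{x\}$ and $x_{n+1}\in\lbr p_{n+1}\rbr\cap U\cap V_{x_n}\setminus\{x_n\}$. This gives a strictly decreasing sequence of ranks, mirroring the descending-sequence argument of Proposition~\ref{incom countable}.

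Finally, the detour through closedness of derived sets in scattered spaces is true but plays no role and can be dropped.
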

For the proof, see \cite[Proposition~2.6]{aguilera2023topological}, where it is also shown that:
\begin{cor}
	$\mathsf{GL}$ is not strongly complete with respect to topologies on ordinals induced by countably complete filters, such as the club topology.
\end{cor}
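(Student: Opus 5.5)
The plan is to reduce the corollary directly to Proposition~\ref{g-delta}: it suffices to show that every topology on an ordinal $\Theta$ induced by a family of countably complete filters is scattered and has the property that every $G_\delta$ set is open. Scatteredness is part of the setting for such ordinal topologies; for the club topology it is recorded in Example~\ref{exam ord}. So the real work is the $G_\delta$ property.

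I will fix the meaning of ``induced by filters''. To each $\alpha<\Theta$ a filter $F_\alpha$ on $\alpha$ is assigned, or $\alpha$ is declared isolated, which we can encode as the improper case. Then $U\subseteq\Theta$ is open iff for every $\alpha\in U$ either $\alpha$ is isolated or $U\cap\alpha\in F_\alpha$. Equivalently, $U$ is a neighbourhood of $\alpha$ iff $\alpha\in U$ and $U$ contains a member of $F_\alpha$. Countable completeness means that $F_\alpha$ is closed under countable intersections.

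Now let $U_n$, $n<\omega$, be open, and let $\alpha\in\bigcap_n U_n$. If $\alpha$ is isolated, then $\{\alpha\}$ is an open set inside $\bigcap_n U_n$. Otherwise, for each $n$ there is $A_n\in F_\alpha$ with $A_n\subseteq U_n$. By countable completeness, $\bigcap_n A_n\in F_\alpha$, and $\bigcap_n A_n\subseteq\bigcap_n U_n$. Hence $\bigcap_n U_n$ is a neighbourhood of each of its points, so it is open, and Proposition~\ref{g-delta} applies.

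For the club topology $\tau_c$, the points of countable cofinality are isolated. For $\mathrm{cf}(\alpha)>\omega$, the neighbourhood filter of $\alpha$ is generated by the club subsets of $\alpha$. The only step that needs care is verifying that the club filter on $\alpha$ is countably complete when $\mathrm{cf}(\alpha)>\omega$. I would prove this by the standard argument. Closedness of $\bigcap_n C_n$ is immediate. For unboundedness, starting above any $\beta<\alpha$, interleave $\omega$ many steps picking successively larger elements of each $C_n$ in turn. The supremum of this $\omega$-sequence is $<\alpha$ because $\mathrm{cf}(\alpha)>\omega$, and it is a limit of points of each $C_n$, so it lies in every $C_n$. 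This is the main, though routine, point; the rest is formal.
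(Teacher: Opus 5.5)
Your proposal is correct and takes the route the paper intends: the paper cites the corollary from the same source as Proposition~\ref{g-delta} and derives it from that proposition, using that countable completeness of the neighbourhood filters makes every $G_\delta$ set open. One small wrinkle is that the neighbourhood reformulation you call ``equivalent'' needs a coherence condition between the $F_\alpha$'s. You never actually need it, though, since your openness criterion gives $(\bigcap_n U_n)\cap\alpha=\bigcap_n(U_n\cap\alpha)\in F_\alpha$ directly, and scatteredness holds because each $[0,\beta]$ is open.
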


In the following subsections we investigate whether $\mathsf{GL}(\mathbb{P})$ is strongly complete with resect to an ordinal space, for uncountable $\mathbb{P}$.

\subsection{Incompleteness with $\mathcal{I}_1$}
We begin by showing that, for a certain set $\mathbb{P}$ of uncountable size, the logic $\mathsf{GL}(\mathbb{P})$ fails to be strongly complete with respect to ordinals equipped with the interval topology.
We start with the following easy fact.
\begin{fact}\label{fact}
	Let $\Theta$ be an ordinal and $\mathfrak{M}$ a model based on $(\Theta,\mathcal{I}_1)$. For any $\alpha\in\Theta$ and formula $\varphi$:
	\begin{itemize}
		\item $\mathfrak{M},\alpha\models\lozenge\varphi$ iff $\{\beta<\alpha \; :\; \mathfrak{M},\beta\models\varphi\}$ is cofinal in $\alpha$.
		\item If $\langle\alpha_i \; : \; i<\mathrm{cf}(\alpha)\rangle$ is a cofinal sequence of $\alpha$, then  $\mathfrak{M},\alpha\models\Box\varphi$ iff $\mathfrak{M},(\alpha_i,\alpha)\models\varphi$ for some $i<\mathrm{cf}(\alpha)$.
	\end{itemize}
\end{fact}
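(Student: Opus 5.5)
The plan is to unwind the definition of the derived-set operator in $(\Theta,\mathcal{I}_1)$. A point $\alpha$ is non-isolated iff it is a limit ordinal, and its basic neighbourhoods are the intervals $(\gamma,\alpha]$ with $\gamma<\alpha$. Since the valuation is defined by $\lbr\lozenge\varphi\rbr = d(\lbr\varphi\rbr)$, we have $\mathfrak{M},\alpha\models\lozenge\varphi$ iff $\alpha$ is a limit point of $\lbr\varphi\rbr$.

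For the first bullet, I will show that $\alpha$ is a limit point of a set $A$ exactly when $A\cap\alpha$ is cofinal in $\alpha$.
\begin{enumerate}
\item Suppose $\alpha$ is a limit point of $A$. Then $\alpha\neq 0$, and $\alpha$ is not a successor $\beta+1$: otherwise $(\beta,\alpha]=\{\alpha\}$ is open and contains no point of $A$ other than $\alpha$. So $\alpha$ is a limit ordinal.
\item For every $\gamma<\alpha$, the neighbourhood $(\gamma,\alpha]$ meets $A\setminus\{\alpha\}$. So some $\beta\in A$ satisfies $\gamma<\beta<\alpha$, which gives cofinality.
\item Conversely, suppose $A\cap\alpha$ is cofinal in $\alpha$. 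Then $\alpha$ is a limit ordinal (for a successor or zero, "cofinal" would force a point $\ge\alpha$ below $\alpha$, or vacuity is excluded by convention). Every open $U\ni\alpha$ contains a basic set $(\gamma,\alpha]$ or $[0,\alpha]$, and such a set meets $A\cap\alpha$ by cofinality.
\end{enumerate}
The only care needed here is the degenerate reading of "cofinal in $\alpha$" for non-limit $\alpha$. I take the intended convention that a cofinal subset exists only for limit $\alpha$, equivalently $\sup(A\cap\alpha)=\alpha$ with $A\cap\alpha$ unbounded below $\alpha$.

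For the second bullet, I use $\Box\varphi\equiv\neg\lozenge\neg\varphi$ together with the first bullet. So $\alpha\models\Box\varphi$ iff $\{\beta<\alpha : \beta\models\neg\varphi\}$ is not cofinal in $\alpha$, i.e.\ iff it is bounded by some $\gamma<\alpha$. This in turn holds iff $(\gamma,\alpha)\subseteq\lbr\varphi\rbr$ for some $\gamma<\alpha$.
\begin{enumerate}
\item Given such a $\gamma$, cofinality of $\langle\alpha_i : i<\mathrm{cf}(\alpha)\rangle$ yields some $\alpha_i\geq\gamma$. Then $(\alpha_i,\alpha)\subseteq(\gamma,\alpha)$, so $\mathfrak{M},(\alpha_i,\alpha)\models\varphi$.
\item Conversely, if $\mathfrak{M},(\alpha_i,\alpha)\models\varphi$, take $\gamma=\alpha_i$.
\end{enumerate}
The statement presupposes that $\alpha$ is a limit ordinal, since it refers to $\mathrm{cf}(\alpha)$.

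I expect no real obstacle. The only subtlety is the bookkeeping of successor and zero points, which are isolated, so $\lozenge\varphi$ fails and $\Box\varphi$ holds there. This is consistent with the convention that cofinal sets and cofinal sequences only exist for limit ordinals.
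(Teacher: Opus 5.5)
Your proposal is correct and follows the paper's route. You characterize $\alpha \in d(\lbr\varphi\rbr)$ through the basic neighbourhoods $(\gamma,\alpha]$, so that cofinality of $\{\beta<\alpha : \mathfrak{M},\beta\models\varphi\}$ in $\alpha$ is exactly the limit-point condition, and you then get the $\Box$ clause by duality. Your handling of zero and successor points, and the step from a bound $\gamma$ to some $\alpha_i\geq\gamma$, just spell out what the paper leaves implicit.
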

\begin{proof}
	Let $B=\{\beta<\alpha\; : \; \mathfrak{M},\beta\models\varphi\}$.
	If $B$ is cofinal in $\alpha$, then $\sup B=\alpha$. Hence, for any basic $\mathcal{I}_1$-open neighbourhood $(\gamma,\alpha]$ of $\alpha$, there is a $\beta\in (\gamma,\alpha)$ with $\beta\in B$. Thus, $\mathfrak{M},\alpha\models \lozenge\varphi$.
	If $B$ is not cofinal in $\alpha$, then $\sup B=\beta'<\alpha$. The neighbourhood $(\beta',\alpha]$ of $\alpha$ contains no points satisfying $\varphi$, so $\mathfrak{M},\alpha\not\models\lozenge\varphi$.
	
	The second statement follows immediately by duality of $\Box$ and $\lozenge$.
\end{proof}
We now construct a set of formulas in an uncountable language that admits no ordinal model.
For any set $A$ and cardinal $\lambda$, let $[A]^\lambda$ denote the set of all subsets of $A$ of cardinality $\lambda$. Also, let $\mathfrak{c}=2^{\aleph_0}$.

\begin{prop}\label{incom countable}
	Let $\mathbb{P}=\{p_i \; : \;  i<\mathfrak{c}^+\}$ be a set of propositional variables.
	Then the set
	\[
	\Gamma = \{\lozenge p_i \; : \;  i<\mathfrak{c}^+\} \cup \{\Box(p_i \to \lozenge p_j) \; : \;  i<j<\mathfrak{c}^+\}
	\]	
	has no model based on any ordinal with interval topology.
\end{prop}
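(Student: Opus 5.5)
Suppose, towards a contradiction, that $\mathfrak{M}$ is a model on $(\Theta,\mathcal{I}_1)$ and $\alpha<\Theta$ satisfies $\mathfrak{M},\alpha\models\Gamma$. The idea is to attach to each index $i<\mathfrak{c}^+$ a countable piece of data, colour pairs $i<j$ by comparing that data, and use the Erd\H{o}s--Rado theorem (Theorem~\ref{erdos-rado}, in the form $\mathfrak{c}^+\to(\aleph_1)^2_{\aleph_0}$) to obtain an uncountable homogeneous set. From that set we will extract an infinite strictly descending sequence of ordinals, which is impossible.

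\textbf{Step 1: the data.} Since $\mathfrak{M},\alpha\models\lozenge p_0$, Fact~\ref{fact} shows that $\alpha$ is a limit ordinal. Fix a cofinal sequence $\langle\alpha_\xi:\xi<\mathrm{cf}(\alpha)\rangle$. For $i<j$, Fact~\ref{fact} applied to $\Box(p_i\to\lozenge p_j)$ yields a least $\xi(i,j)<\mathrm{cf}(\alpha)$ such that every $\gamma\in(\alpha_{\xi(i,j)},\alpha)$ with $\gamma\models p_i$ also satisfies $\gamma\models\lozenge p_j$. If $\mathrm{cf}(\alpha)=\omega$, the map $\{i,j\}\mapsto\xi(i,j)$ is a colouring with countably many colours.

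If $\mathrm{cf}(\alpha)$ is uncountable, I would instead avoid colouring. The bad sets $\{\gamma<\alpha:\gamma\models p_i\wedge\neg\lozenge p_j\}$ are each bounded, but there are $\mathfrak{c}^+$ of them, so they need not be jointly bounded. A cleaner route is to reduce the uncountable-cofinality case to the countable one. Pick a point below $\alpha$ where everything relevant holds, or, more simply, note that the argument below only needs a point of countable cofinality satisfying enough of $\Gamma$. The difficulty is finding such a point.

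So I will treat the general case uniformly by colouring with the \emph{first} witness: for $i<j$, let $c(i,j)\in\omega$ be determined by the structure. Honestly, the main obstacle is exactly this: bounding the number of colours by $\aleph_0$ when $\mathrm{cf}(\alpha)>\omega$. My first attempt is to replace $\alpha$ by a countable-cofinality point. Take $\omega$ many indices $i_0<i_1<\dots$, choose $\gamma_0\models p_{i_0}$ close to $\alpha$, then $\gamma_1<\gamma_0$ with $p_{i_1}$, and so on. This produces ordinals, and the goal is to make such a sequence descend. That is in fact the contradiction mechanism itself.

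\textbf{Step 2: homogeneity gives descent.} Suppose $H\subseteq\mathfrak{c}^+$ is uncountable (order type at least $\omega$ suffices) with $\xi(i,j)=\xi^*$ constant on $[H]^2$. Let $h_0<h_1<\cdots$ be elements of $H$ and set $\beta=\alpha_{\xi^*}$. Since $\alpha\models\lozenge p_{h_0}$, there is $\gamma_0\in(\beta,\alpha)$ with $\gamma_0\models p_{h_0}$. By homogeneity $\gamma_0\models\lozenge p_{h_1}$, so there is $\gamma_1\in(\beta,\gamma_0)$ with $p_{h_1}$. Here I use that $\gamma_0>\beta$ is a limit ordinal, so points satisfying $p_{h_1}$ are cofinal below it and can be chosen above $\beta$. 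Inductively, $\gamma_{n+1}\in(\beta,\gamma_n)$ with $\gamma_{n+1}\models p_{h_{n+1}}$, and it again lies in $(\beta,\alpha)$, so it satisfies $\lozenge p_{h_{n+2}}$. This gives an infinite descending sequence of ordinals, a contradiction.

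\textbf{Step 3: the uncountable-cofinality case.} Step~2 only requires $\omega$ indices sharing a common bound $\beta<\alpha$. If $\mathrm{cf}(\alpha)>\omega$, take any countable set of indices $h_0<h_1<\cdots$. The bounds $\xi(h_n,h_m)$ form a countable family, so their supremum $\xi^*$ is still below $\mathrm{cf}(\alpha)$. Thus no partition theorem is needed, and Step~2 runs verbatim. So Erd\H{o}s--Rado is needed precisely in the countable-cofinality case, where the countably many colours leave an uncountable, hence infinite increasing, homogeneous set. The expected subtle point is just verifying in Step~2 that each $\gamma_n$ stays inside $(\beta,\alpha)$ so the uniform bound applies; the construction guarantees this.
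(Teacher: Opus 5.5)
Your final argument (Steps 2 and 3) is correct and is essentially the paper's proof. When $\mathrm{cf}(\alpha)=\omega$ you apply Erd\H{o}s--Rado to the colouring $\{i,j\}\mapsto\xi(i,j)\in\omega$; when $\mathrm{cf}(\alpha)>\omega$ you take the supremum of countably many bounds; and in both cases you build a descending sequence $\gamma_0>\gamma_1>\cdots$ inside $(\alpha_{\xi^*},\alpha)$. The exploratory remarks in Step 1 about a uniform $\omega$-colouring for uncountable cofinality are never used and should be deleted.
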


\begin{proof}
	First observe that $\Gamma$ is consistent: every finite subset of $\Gamma$ is easily seen to be satisfiable.
	
	Assume, for contradiction, that $\mathfrak{M},\alpha\models\Gamma$ for some model $\mathfrak{M}$ based on an ordinal $(\Theta,\mathcal{I}_1)$ with $\alpha<\Theta$.
	Clearly, $\alpha$ must be a limit ordinal. We distinguish two cases according to the cofinality of $\alpha$.
	
	\begin{description}
		\item[Case 1] Suppose $\mathrm{cf}(\alpha)=\omega$.
		Let $\langle \alpha_n \; : \;  n<\omega\rangle$ be an increasing cofinal sequence in $\alpha$.
		Define $\mathbf{c}:[\mathfrak{c}^+]^2 \to \omega$ by,
		$$\mathbf{c}(i,j)=\min\{n \; : \;  \mathfrak{M},(\alpha_n,\alpha)\models p_i \to \lozenge p_j\},$$

		for $i<j$.
		This is well-defined since $\mathfrak{M},\alpha\models \Box(p_i \to \lozenge p_j)$ (see Fact~\ref{fact}).
		
		By the Erd\H{o}s--Rado theorem (Theorem~\ref{erdos-rado}), there exists $S\subseteq \mathfrak{c}^+$ of size $\aleph_1$ and $n_*<\omega$ such that $\mathbf{c}(i,j)=n_*$ for all $i<j$ in $S$.
		Hence, for all $i<j$ in $S$ and all $\beta\in (\alpha_{n_*},\alpha)$, we have $\mathfrak{M},\beta\models p_i \to \lozenge p_j$.
		
		Choose $i_0<i_1<i_2<\dots$ from $S$.
		Since $\mathfrak{M},\alpha\models \lozenge p_{i_0}$, there exists $\beta_0\in (\alpha_{n_*},\alpha)$ with $\mathfrak{M},\beta_0\models p_{i_0}$.
		Then $\mathfrak{M},\beta_0\models p_{i_0}\wedge \lozenge p_{i_1}$, so we can find $\beta_1\in (\alpha_{n_*},\beta_0)$ with $\mathfrak{M},\beta_1\models p_{i_1}$.
		Repeating this argument, we obtain a strictly decreasing sequence $\langle \beta_m \; :\;  m<\omega\rangle$ of ordinals, a contradiction.
		
		\item[Case 2] Suppose $\mathrm{cf}(\alpha)>\omega$.
		For each $n<\omega$, let $\gamma_n<\alpha$ be such that $\mathfrak{M},(\gamma_n,\alpha)\models p_n \to \lozenge p_{n+1}$.
		Set $\gamma=\sup_{n<\omega}\gamma_n$. Since $\mathrm{cf}(\alpha)>\omega$, we have $\gamma<\alpha$.
		Thus, for all $\beta\in (\gamma,\alpha)$ and all $n<\omega$, $\mathfrak{M},\beta\models p_n \to \lozenge p_{n+1}$.
		
		Proceeding as in Case~1, choose $\beta_0\in (\gamma,\alpha)$ with $\mathfrak{M},\beta_0\models p_0$.
		Then $\mathfrak{M},\beta_0\models p_0\wedge \lozenge p_1$, so there exists $\beta_1\in (\gamma,\beta_0)$ with $\mathfrak{M},\beta_1\models p_1$.
		Continuing inductively, we obtain a strictly decreasing sequence $\langle \beta_n \; : \; n<\omega\rangle$, again a contradiction.
	\end{description}
	So, it is not possible that $\mathfrak{M},\alpha\models\Gamma$.
\end{proof}

\subsection{Incompleteness with $\mathcal{I}_\lambda$}\label{icard incom}
In this part we show that for any ordinal $\lambda$ there is an uncountable language $\mathcal{L}(\mathbb{P})$ of high cardinality such that $\mathsf{GL}(\mathbb{P})$ is not strongly complete with respect to any ordinals with $\mathcal{I}_\lambda$.

As we saw, the proof of Proposition~\ref{incom countable} is based on cofinality of $\alpha$. To show the strong incompleteness for $\mathcal{I}_\lambda$ we need the same idea. So we have to know the relation between the cofinality of $\alpha$ and $\ell^\xi\alpha$.

\begin{lem}\label{cf >}
	Suppose $\mathrm{cf}(\alpha) > |\lambda| + \aleph_0$. Then for all $\xi < \lambda$
	\[ \mathrm{cf}(\ell^\xi \alpha) = \mathrm{cf}(\alpha). \]
\end{lem}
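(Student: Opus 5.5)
Since $\mathrm{cf}(\alpha)>\aleph_0$, $\alpha$ is a limit ordinal of uncountable cofinality. I would argue by transfinite induction on $\xi<\lambda$, proving the stronger statement that $\ell^\xi\alpha$ is a limit ordinal and $\mathrm{cf}(\ell^\xi\alpha)=\mathrm{cf}(\alpha)$. For $\xi=0$ there is nothing to prove.

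For the successor step $\xi=\eta+1$, the defining law $\ell^{\eta+1}=\ell\circ\ell^\eta$ gives $\ell^\xi\alpha=\ell(\ell^\eta\alpha)$. So it suffices to show that if $\delta$ is a limit with $\mathrm{cf}(\delta)=\mu>\aleph_0$, then $\ell\delta$ is a limit with $\mathrm{cf}(\ell\delta)=\mu$. Write $\delta=\gamma+\omega^{\beta}$ with $\beta=\ell\delta$. Lemma~\ref{cf sum} gives $\mathrm{cf}(\omega^\beta)=\mu$. If $\beta$ were $0$, then $\delta$ would be a successor. If $\beta$ were a successor $\beta'+1$, then $\omega^\beta=\omega^{\beta'}\cdot\omega$ would have cofinality $\omega$. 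Both contradict $\mu>\aleph_0$. Hence $\beta$ is a limit. Since $\omega^{\beta}=\sup_{\beta''<\beta}\omega^{\beta''}$ and $\beta''\mapsto\omega^{\beta''}$ is normal, $\mathrm{cf}(\omega^\beta)=\mathrm{cf}(\beta)$, so $\mathrm{cf}(\ell\delta)=\mu$.

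The limit step $\xi$ is the main obstacle. I would use Lemma~\ref{l lem}(\ref{l lem3}): writing $\xi=\eta+\omega^\beta$, there is $\sigma<\xi$ such that for all large $\zeta\in[\sigma,\xi)$ we have $\ell^\zeta\alpha=e^{\omega^\beta}\ell^\xi\alpha$. Fix such a $\zeta<\xi<\lambda$. By induction, $\mathrm{cf}(\ell^\zeta\alpha)=\mathrm{cf}(\alpha)>|\lambda|+\aleph_0$, and $\ell^\zeta\alpha$ is a limit. Now apply Lemma~\ref{cf l lem} to $e^{\omega^\beta}(\ell^\xi\alpha)$.
\begin{itemize}
\item If $\ell^\xi\alpha$ were not a limit ordinal (including $0$), the cofinality would be $\max\{\mathrm{cf}(\omega^\beta),\aleph_0\}\le|\xi|+\aleph_0\le|\lambda|+\aleph_0$. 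This contradicts the hypothesis, which is exactly where the assumption $\mathrm{cf}(\alpha)>|\lambda|+\aleph_0$ enters.
\item So $\ell^\xi\alpha$ is a limit, and the lemma gives $\mathrm{cf}(\ell^\xi\alpha)=\mathrm{cf}(e^{\omega^\beta}\ell^\xi\alpha)=\mathrm{cf}(\ell^\zeta\alpha)=\mathrm{cf}(\alpha)$.
\end{itemize}
The one care point is that the "$0$/successor" case of Lemma~\ref{cf l lem} is treated uniformly, with $\mathrm{cf}$ of a successor read as in that lemma. I would also double-check that Lemma~\ref{l lem}(\ref{l lem3}) applies for arbitrary $\alpha$, which it does as stated.
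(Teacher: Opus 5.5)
Your proof is correct and follows the paper's argument. Both use induction on $\xi$: the successor step reduces to the fact that $\ell$ preserves uncountable cofinality (you check this directly via normality of $\beta\mapsto\omega^\beta$, where the paper cites Lemma~\ref{cf l lem}), and the limit step uses Lemma~\ref{l lem}(\ref{l lem3}) together with Lemma~\ref{cf l lem}, ruling out a non-limit $\ell^\xi\alpha$ by the bound $|\lambda|+\aleph_0$. Carrying ``$\ell^\xi\alpha$ is a limit'' in your induction hypothesis also cleanly handles the degenerate case $\ell^\xi\alpha=0$, which the paper's appeal to Lemma~\ref{cf l lem} passes over.
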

\begin{proof}
	The proof proceeds by induction on $\xi$, and for all ordinals $\alpha$ with $\mathrm{cf}(\alpha) > |\lambda| + \aleph_0$.
	
	First assume that $\xi = 1$ and $\alpha = \beta + \omega^\theta$ with $\mathrm{cf}(\alpha)>|\lambda| + \aleph_0$. Then, Lemma~\ref{cf sum} implies that $\theta > 0$, and hence by Lemma~\ref{cf l lem} and induction hypothesis we have
	\[ \mathrm{cf}(\alpha) = \mathrm{cf}(\omega^\theta) = \mathrm{cf}(\theta) = \mathrm{cf}(\ell \alpha) . \]
	
	Suppose that $\xi = \sigma + 1$ is a successor ordinal and $\mathrm{cf}(\ell^\sigma \alpha) = \mathrm{cf}(\alpha)$.
	So $\mathrm{cf}(\ell^\sigma \alpha) > |\lambda| + \aleph_0$, hence by induction hypothesis we have
	\[ \mathrm{cf}(\ell^{\sigma+1} \alpha) = \mathrm{cf}(\ell(\ell^\sigma \alpha)) = \mathrm{cf}(\ell^\sigma \alpha) = \mathrm{cf}(\alpha). \]
	
	Now let $\xi$ be a limit ordinal and $\mathrm{cf}(\ell^\sigma \alpha) = \mathrm{cf}(\alpha)$ for all $\sigma < \xi$ . Let $\xi = \eta + \omega^\beta$. Then by Lemma~\ref{l lem}~\ref{l lem3}, for large enough $\sigma < \xi$ we have
	\[ \ell^\sigma \alpha = e^{\omega^\beta} \ell^\xi \alpha, \]
	and then
	\[ \mathrm{cf}(\alpha) = \mathrm{cf}(\ell^\sigma \alpha) = \mathrm{cf}(e^{\omega^\beta} \ell^\xi \alpha). \]
	
	If $\ell^\xi \alpha$ is a limit ordinal, by Lemma~\ref{cf l lem},
	\[ \mathrm{cf}(e^{\omega^\beta} \ell^\xi \alpha) = \mathrm{cf}(\ell^\xi \alpha) \]
	and we are done. Otherwise
\[
\mathrm{cf}(e^{\omega^\beta} \ell^\xi \alpha) \leq \max\{\aleph_0, \mathrm{cf}(\omega^\beta)\} \leq |\lambda| + \aleph_0.
\]
	So $\mathrm{cf}(\alpha) \leq |\lambda| + \aleph_0$, a contradiction. Thus the Lemma follows.
\end{proof}

\begin{lem}\label{lem 3}
	Suppose $\mathrm{cf}(\alpha) \leq |\lambda| + \aleph_0$. Then for all $\xi < \lambda$,
	\[ \mathrm{cf}(\ell^\xi \alpha) \leq |\lambda| + \aleph_0 \]
\end{lem}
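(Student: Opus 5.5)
We argue by induction on $\xi<\lambda$, simultaneously for all ordinals $\alpha$ with $\mathrm{cf}(\alpha)\le |\lambda|+\aleph_0$, following the structure of the proof of Lemma~\ref{cf >}. Put $\kappa=|\lambda|+\aleph_0$. We adopt the convention that a zero or successor ordinal has cofinality at most $1$, so for such ordinals the bound $\le\kappa$ holds trivially. Consequently, at each stage only the case where $\ell^\xi\alpha$ is a limit ordinal needs an argument. The case $\xi=0$ is immediate, since $\ell^0\alpha=\alpha$.

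\emph{Case $\xi=1$.} Write $\alpha=\beta+\omega^\theta$, so that $\ell\alpha=\theta$. Suppose $\theta$ is a limit ordinal. Then $e^1\theta=-1+\omega^\theta=\omega^\theta$, and Lemma~\ref{cf sum} together with Lemma~\ref{cf l lem} (applied with $\lambda=1$) gives
\[
\mathrm{cf}(\alpha)=\mathrm{cf}(\omega^\theta)=\mathrm{cf}(\theta)=\mathrm{cf}(\ell\alpha).
\]
Hence $\mathrm{cf}(\ell\alpha)\le\kappa$. Notice that this step uses no hypothesis relating $\alpha$ to $\lambda$ beyond $\mathrm{cf}(\alpha)\le\kappa$, so it can be reused below for any ordinal of small cofinality.

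\emph{Successor case $\xi=\sigma+1$.} We have $\ell^{\sigma+1}\alpha=\ell(\ell^\sigma\alpha)$. The induction hypothesis gives $\mathrm{cf}(\ell^\sigma\alpha)\le\kappa$, and the $\xi=1$ argument applied to the ordinal $\ell^\sigma\alpha$ then yields $\mathrm{cf}(\ell^{\sigma+1}\alpha)\le\kappa$.

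\emph{Limit case $\xi=\eta+\omega^\beta$.} By Lemma~\ref{l lem}\,(\ref{l lem3}), there is $\sigma<\xi$ such that $\ell^\sigma\alpha=e^{\omega^\beta}\ell^\xi\alpha$. If $\ell^\xi\alpha$ is a limit ordinal, Lemma~\ref{cf l lem} gives
\[
\mathrm{cf}(\ell^\xi\alpha)=\mathrm{cf}(e^{\omega^\beta}\ell^\xi\alpha)=\mathrm{cf}(\ell^\sigma\alpha)\le\kappa,
\]
where the last inequality is the induction hypothesis for $\sigma<\xi<\lambda$. If $\ell^\xi\alpha$ is not a limit ordinal, the bound holds trivially.

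The only delicate point is the treatment of non-limit values, namely ensuring that the convention on their cofinality is consistent with how Lemma~\ref{cf l lem} and Lemma~\ref{cf sum} are used. Once non-limit values are handled trivially, the argument is simpler than that of Lemma~\ref{cf >}, because the non-limit branch in the limit case now gives the desired conclusion directly rather than a contradiction.
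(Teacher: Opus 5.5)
Your proof is correct and follows the paper's argument essentially step for step: induction on $\xi$ uniformly in $\alpha$, with the limit step handled through Lemma~\ref{l lem}\,(\ref{l lem3}) and Lemma~\ref{cf l lem} exactly as in the paper. The differences are only cosmetic. You route the successor step through the $\xi=1$ computation applied to $\ell^\sigma\alpha$ (via $\ell^{\sigma+1}=\ell\circ\ell^\sigma$) instead of redoing the three subcases on the exponent, and you make the case $\xi=0$ and the convention for non-limit cofinalities explicit.
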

\begin{proof}
	The proof proceeds by induction on $\xi$ and for all ordinals $\alpha$ with $\mathrm{cf}(\alpha) \leq |\lambda| + \aleph_0$.
	The base case $\xi = 1$ is clear.
	
	Suppose $\xi = \sigma + 1$ is a successor ordinal and $\mathrm{cf}(\ell^\sigma \alpha) \leq |\lambda| + \aleph_0$.
	Let $\ell^\sigma \alpha = \eta + \omega^\beta$.
	
	If $\beta = 0$, then $\ell^{\sigma+1} \alpha = 0$.
	If $\beta = \theta + 1$, then $\ell^{\sigma+1} \alpha = \beta = \theta + 1$,
	and $\mathrm{cf}(\ell^{\sigma+1} \alpha) = 1$.
	If $\beta$ is limit, then $\ell^{\sigma+1} \alpha = \beta$ and
	$\mathrm{cf}(\ell^{\sigma+1} \alpha) = \mathrm{cf}(\beta)
		= \mathrm{cf}(\ell^\sigma \alpha) \leq |\lambda| + \aleph_0$.
		
	Now assume that $\xi$ is a limit ordinal. Suppose the induction holds for all $\sigma < \xi$. Write $\xi = \eta + \omega^\beta$. Then By Lemma~\ref{l lem}~\ref{l lem3} for all large $\sigma < \xi$
	\[ \ell^\sigma \alpha = e^{\omega^\beta} \ell^\xi \alpha, \]
	and then
	\[ \mathrm{cf}(\ell^\sigma \alpha) = \mathrm{cf}(e^{\omega^\beta} \ell^\xi \alpha) \leq |\lambda| + \aleph_0. \]
	 If $\ell^\xi \alpha$ is a limit ordinal,
	\[ \mathrm{cf}(\ell^\xi \alpha)=\mathrm{cf}(e^{\omega^\beta} \ell^\xi \alpha) = \mathrm{cf}(\ell^\sigma \alpha), \]
	and we are done by the induction hypothesis. Otherwise  $\ell^\xi \alpha$ is a successor ordinal and $\mathrm{cf}(\ell^\xi \alpha)=1 \leq |\lambda| + \aleph_0.$
\end{proof}

Now, we show that for a given ordinal $\lambda$, $\mathsf{GL}(\mathbb{P})$ is not strongly complete with respect to any $(\Theta,\mathcal{I}_\lambda)$ when $\mathbb{P}$ has sufficiently large cardinality. This establishes the first part of Theorem~\ref{incom uncount1}.

\begin{prop}
	Let $\lambda$ be any ordinal and $\kappa = |\lambda| +\aleph_0$.
	Let $\mathbb{P}=\{p_i\;: \; i<  (2^\kappa)^+   \}$ be a set of propositional variables with $|\mathbb{P}|= (2^\kappa)^+$.
	Let \[ \Gamma = \{ \Diamond p_i \; : \; i < (2^\kappa)^+ \} \cup \{ \Box(p_i \to \Diamond p_j) \; : \; i < j < (2^\kappa)^+ \}. \]
	Then $\Gamma$ is not satisfiable on any $(\Theta,\mathcal{I}_\lambda)$.
\end{prop}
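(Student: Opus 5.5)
The plan is to imitate the proof of Proposition~\ref{incom countable}, with basic $\mathcal{I}_\lambda$-neighbourhoods of $\alpha$ in place of intervals $(\alpha_n,\alpha]$. Suppose $\mathfrak{M},\alpha\models\Gamma$ for a model on $(\Theta,\mathcal{I}_\lambda)$. By Lemma~5.11 of \cite{aguilera2017strong} with $U=\Theta$, the sets $B_r(\alpha)=\bigcap_{\xi\in\mathrm{dom}(r)}(r(\xi),\ell^\xi\alpha]_\xi$ form a neighbourhood base at $\alpha$. Here $\mathrm{dom}(r)\subseteq\lambda$ is finite and $r(\xi)<\ell^\xi\alpha$ for $\xi\in\mathrm{dom}(r)$. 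The degenerate case $\ell^\lambda\alpha=0$, which that lemma excludes, needs separate treatment. There $\alpha$ is isolated, or at least has a neighbourhood of the same form, so $\lozenge p_0$ already fails or the same reasoning applies.

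Two observations drive the argument. First, every $\mathcal{I}_\lambda$-open set is $\mathcal{I}_0$-open, since $\ell^\xi$ is monotone enough that each $(a,b]_\xi$ is downward closed within the relevant range. So $\beta\models\lozenge\varphi$ implies that $\varphi$ holds at points $<\beta$ in every neighbourhood of $\beta$. Second, if $V$ is open and $V\models p_i\to\lozenge p_j$ for all consecutive pairs $i<j$ of an $\omega$-sequence $i_0<i_1<\cdots$, then $\lozenge p_{i_0}$ at $\alpha$ yields $\beta_0\in V\setminus\{\alpha\}$ with $p_{i_0}$. Since $V$ is an open neighbourhood of $\beta_0$ and $\beta_0\models\lozenge p_{i_1}$, we get $\beta_1<\beta_0$ in $V$ with $p_{i_1}$, and so on. This produces an infinite descending sequence of ordinals, which is the desired contradiction. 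So it suffices to find one basic neighbourhood $V=B_{r^*}(\alpha)$ working for infinitely many pairs from an increasing sequence.

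To find $V$, colour each pair $i<j<(2^\kappa)^+$ by a witness $r_{ij}$ with $B_{r_{ij}}(\alpha)\models p_i\to\lozenge p_j$; such a witness exists because $\alpha\models\Box(p_i\to\lozenge p_j)$. The colour used is $\mathbf{c}(i,j)=\mathrm{dom}(r_{ij})\in[\lambda]^{<\omega}$. There are at most $\kappa$ colours, so Theorem~\ref{erdos-rado} gives a homogeneous $S$ of size $\kappa^+$ with common domain $d$. Choose $i_0<i_1<\cdots$ in $S$ and set $r^*(\xi)=\sup_n r_{i_ni_{n+1}}(\xi)$ for $\xi\in d$. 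The main obstacle is ensuring $r^*(\xi)<\ell^\xi\alpha$, so that $B_{r^*}(\alpha)$ is a genuine neighbourhood contained in every $B_{r_{i_ni_{n+1}}}(\alpha)$. I would split into cases according to the cofinality of $\alpha$.

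If $\mathrm{cf}(\alpha)>\kappa$, Lemma~\ref{cf >} gives $\mathrm{cf}(\ell^\xi\alpha)=\mathrm{cf}(\alpha)>\omega$ for each $\xi\in d$, so the countable sup stays below $\ell^\xi\alpha$. If $\mathrm{cf}(\alpha)\le\kappa$, the plan is to refine the colouring instead. By Lemma~\ref{lem 3}, for each $\xi<\lambda$ we can fix a cofinal subset $C_\xi$ of $\ell^\xi\alpha$ of size $\le\kappa$; if $\ell^\xi\alpha$ is a successor, we take $C_\xi$ to be its predecessor. We then enlarge each $r_{ij}(\xi)$ to the least element of $C_\xi$ above it, which only shrinks the neighbourhood. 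Now the witnesses range over finite partial functions $\lambda\to\bigcup_\xi C_\xi$, of which there are $\le\kappa$. Colouring by the full witness, Erd\H{o}s--Rado makes $r^*$ literally constant on $S$, so no sup is needed. In both cases the descending-chain argument of the second paragraph finishes the proof.
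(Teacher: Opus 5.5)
Your proposal is correct and follows essentially the same route as the paper's proof. Both split on whether $\mathrm{cf}(\alpha)\le\kappa$, invoke Lemmas~\ref{lem 3} and~\ref{cf >}, and apply Erd\H{o}s--Rado. In the low-cofinality case the colouring is by the full witness, discretized along cofinal sets of size $\le\kappa$. In the high-cofinality case it is by the finite domain, followed by a countable supremum. Both then finish with the same descending-chain contradiction.

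One justification needs correcting. It is not true that every $\mathcal{I}_\lambda$-open set is $\mathcal{I}_0$-open; the inclusion goes the other way. $\mathcal{I}_\lambda$ is by definition the least topology containing $\mathcal{I}_0$ and the sets $(a,b]_\xi$. Those sets are not downward closed: $(0,1]_1$ contains $\omega$ but not $1$. Also $\ell^\xi$ is not monotone, since $\ell\omega=1>0=\ell(\omega+1)$.

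What you actually need follows from the correct inclusion. The set $[0,\beta]$ is $\mathcal{I}_\lambda$-open, so for any neighbourhood $V$ of $\beta$ the set $V\cap[0,\beta]$ is again a neighbourhood. Hence $\beta\models\lozenge\varphi$ gives a $\varphi$-point in $V\cap[0,\beta)$. This is exactly how the paper chooses $\beta_1\in(-1,\beta_0)\cap O$.

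Two small loose ends are worth tidying. First, intersecting your basic sets with $(-1,\alpha]$, as the paper does, also covers $\lambda=0$. The neighbourhood-base lemma you cite assumes $0<\lambda$, and with $\lambda=0$ your sets $B_r(\alpha)$ collapse to $\Theta$. Second, your degenerate case is immediate: $\rho_\lambda\alpha=\ell^\lambda\alpha=0$ means $\alpha$ is isolated, so $\lozenge p_0$ fails outright.
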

\begin{proof}
	Suppose that $\mathfrak{M},\alpha \models \Gamma$ for some model $\mathfrak{M}=(\Theta_\lambda,\lbr\cdot\rbr)$ with $\alpha\in\Theta$.
	Let $\chi>(2^\kappa)^+ $ be a large enough regular cardinal and let $\lhd$ be a well-ordering of $\mathcal{H}(\chi),$ the collection of sets of hereditarily cardinality less than $\chi$.

We conclude a contradiction in two cases.
	\begin{description}
		\item[Case 1] Assume $\mathrm{cf}(\alpha) \leq |\lambda| + \aleph_0$.
		Then by Lemma \ref{lem 3}, for all $\xi < \lambda$,
		\[ \mathrm{cf}(\ell^\xi \alpha) \leq |\lambda| + \aleph_0 = \kappa. \]
		So for each such $\xi$ let
		\[ \langle \alpha_\xi(\eta)\; : \;\eta < \mathrm{cf}(\ell^\xi \alpha) \rangle \]
		be an increasing and cofinal sequence in $\ell^\xi \alpha$.	
		Define $\mathbf{c} : [(2^\kappa)^+]^2 \to [\lambda \times \kappa]^{<\omega}$ by
		$$\mathbf{c}(i, j) = B(i, j),$$
		where $B(i, j)$ is the $\lhd$-least finite subset of $\lambda \times \kappa$ such that  $\mathfrak{M},\gamma \models p_i \to \Diamond p_j$, for all $\gamma\in (-1, \alpha) \cap \bigcap_{(\xi, \eta) \in B(i, j)} (\alpha_\xi(\eta), \ell^\xi \alpha]_\xi$.

		By the Erd\H{o}s--Rado theorem, there are $S \subseteq (2^\kappa)^+$ of size $\kappa^+$ and $B \in [\lambda \times \kappa]^{<\omega}$ such that
		\[ \forall i < j \in S \quad \mathbf{c}(i, j) = B. \]
		Let $O= (-1, \alpha] \cap\bigcap_{(\xi, \eta) \in B} (\alpha_\xi(\eta), \ell^\xi \alpha]_\xi$. Then $O$ forms a basic neighbourhood of $\alpha$.
		Now, we proceed as in the proof of Proposition \ref{incom countable}, to conclude a contradiction. Thus choose
		$i_0<i_1<i_2<\dots$ from $S$.
		Since $\mathfrak{M},\alpha\models \lozenge p_{i_0}$, there exists $\beta_0\in O-\{\alpha\}$ with $\mathfrak{M},\beta_0\models p_{i_0}$. Note that $\ell^\xi \beta_0 > \alpha_\xi(\eta),$ for all $(\xi, \eta) \in B.$
		Then $\mathfrak{M},\beta_0\models p_{i_0}\wedge \lozenge p_{i_1}$, so we can find $\beta_1\in (-1, \beta_0) \cap O$ with $\mathfrak{M},\beta_1\models p_{i_1}$.
		Repeating this argument, we obtain a strictly decreasing sequence $\langle \beta_m\; : \; m<\omega\rangle$ of ordinals, a contradiction.

		\item[Case 2] Assume that $\mathrm{cf}(\alpha)>|\lambda|+\aleph_0=\kappa$.
		Then by Lemma~\ref{cf >}, $\mathrm{cf}(\ell^\xi\alpha)>\kappa$ for all $\xi<\lambda$.
		Now define  $\mathbf{c} : [(2^\kappa)^+]^2 \to [\lambda]^{<\omega}$ by
		$$\mathbf{c}(i, j) = B(i, j),$$
		where $B(i, j)$ is the $\lhd$-least finite subset of $\lambda$ such that
 for all $\xi\in B$ there exists $\beta_\xi<\ell^\xi\alpha$ such that
		$\mathfrak{M},\gamma\models p_i\to\lozenge p_j$ for all $\gamma\in (-1,\alpha)\cap \bigcap_{\xi\in B}(\beta_\xi,\ell^\xi\alpha]_\xi$.
		Again by the Erd\H{o}s--Rado theorem, there are a finite set $B \subseteq \kappa$ and a set $S\subseteq (2^\kappa)^+$ of size $\kappa^+$ such that $\mathbf{c}\upharpoonright [S]^2=B$.
		Let $i_0< i_1<\dots< i_n <\dots$ be a countable sequence of elements of $S$.
		Then for all $n<m$, we have $\mathbf{c}(i_n,i_m)=B$.
		So, for each $\xi\in B$ and each $n<m$, there is $\beta_{\xi(n,m)} < \ell^\xi\alpha$, such that
		$$(-1,\alpha)\cap \bigcap_{\xi\in B}(\beta_{\xi(n,m)},\ell^\xi\alpha]_\xi\subseteq \lbr p_{i_n}\to\lozenge p_{i_m}\rbr.$$
		Now for each $\xi\in B$, let $\beta_\xi = \sup_{n,m<\omega}\beta_{\xi(n,m)}$.
		Since $\mathrm{cf}(\ell^\xi\alpha)>\kappa \geq \aleph_0$, we have $\beta_\xi<\ell^\xi\alpha$, for all $\xi\in B$.
		Now consider the open basic neighbourhood $O=(-1,\alpha]\cap \bigcap_{\xi\in B}(\beta_\xi,\ell^\xi\alpha]$ of $\alpha$. Note that $O-\{\alpha\}$ is non-empty, since $\mathfrak{M},\alpha\models\lozenge p_i$, for all $i$.
		Again by the similar argument as in the first case one can conclude a contradiction.
	\end{description}
	So $\Gamma$ cannot be satisfied in $\alpha$.
\end{proof}

\subsection{Incompleteness with ${\tau_c}_{+\lambda}$}

It is known that it is consistent with ZFC + ``there exists a Mahlo cardinal" that $\mathsf{GL}$ is not complete with respect to ordinals with club topology, see \cite{blass}.
However, it is proved that if $\Theta> \aleph_{e^\lambda\omega+1}$, then $\mathsf{GL}$ is strongly complete with respect to $(\Theta, {\tau_c}_{+\lambda})$, where  $\tau_c$ is the club topology over $\Theta$, \cite[Corrolary~6.15]{aguilera2017strong}.
In this part we show that $\mathsf{GL}(\mathbb{P})$ is not strongly complete with respect to any $(\Theta, {\tau_c}_{+\lambda})$ for $\mathbb{P}$ of sufficiently high cardinality, thereby proving the second part of Theorem~\ref{incom uncount1}.

\begin{prop}
	Let $\lambda>0$ be any ordinal and $\kappa = |\lambda| +\aleph_0$.
	Let $\mathbb{P}=\{p_i\; : \; i<  (2^\kappa)^+   \}$ be a set of propositional variables with $|\mathbb{P}|= (2^\kappa)^+$.
	Let \[ \Gamma = \{ \Diamond p_i \; : \; i < (2^\kappa)^+ \} \cup \{ \Box(p_i \to \Diamond p_j) \; : \; i < j < (2^\kappa)^+ \}. \]
	Then $\Gamma$ is not satisfiable on any $(\Theta, {\tau_c}_{+\lambda})$.
\end{prop}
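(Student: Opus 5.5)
We argue as in the proof for $\mathcal{I}_\lambda$. Suppose $\mathfrak{M},\alpha\models\Gamma$ for some model $\mathfrak{M}$ based on $(\Theta,{\tau_c}_{+\lambda})$. The first task is to describe basic neighbourhoods of $\alpha$. By Lemma~5.11 of \cite{aguilera2017strong} (quoted above), with $\mathfrak{X}=(\Theta,\tau_c)$, the sets $U\cap B_r^{\mathfrak{X}}(\alpha)$ form a neighbourhood base at $\alpha$. Here $U$ is a $\tau_c$-open set containing $\alpha$, and $r$ is a simple function with $\mathrm{dom}(r)\subseteq\lambda$ and $r(\xi)<\ell^\xi\rho_{\tau_c}\alpha$. (If $\ell^\lambda\rho_{\tau_c}\alpha=0$, the topology at $\alpha$ is governed by finitely many earlier levels, and the same description applies with the obvious modification.) Since $\alpha\models\lozenge p_i$, the point $\alpha$ is not isolated, so $\mathrm{cf}(\alpha)>\omega$. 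Write $\delta=\rho_{\tau_c}\alpha$, so $\mathrm{cf}(\alpha)=\Omega_\delta$. For $\tau_c$-neighbourhoods we may take $U=C\cup\{\alpha\}$ with $C$ a club in $\alpha$.

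The key observation is that, once $i<j$ are fixed, $\Box(p_i\to\lozenge p_j)$ is witnessed by a pair $(C_{ij},r_{ij})$. We colour the pair $\{i,j\}$ by the $\lhd$-least finite set $B(i,j)=\mathrm{dom}(r_{ij})\subseteq\lambda$ for which such a witness exists. There are at most $\kappa$ colours, so Theorem~\ref{erdos-rado} gives a homogeneous set $S$ of size $\kappa^+$ with constant colour $B$. Choose $i_0<i_1<\cdots$ in $S$. We now merge the countably many witnesses $(C_{i_ni_m},r_{i_ni_m})$ for $n<m$ into a single neighbourhood:
\begin{itemize}
\item[(a)] $C=\bigcap_{n<m}C_{i_ni_m}$ is again a club in $\alpha$, because $\mathrm{cf}(\alpha)>\omega$ and the club filter is countably complete;
\item[(b)] for each $\xi\in B$ put $\beta_\xi=\sup_{n<m} r_{i_ni_m}(\xi)$. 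We need $\beta_\xi<\ell^\xi\delta$. This holds when $\mathrm{cf}(\ell^\xi\delta)>\omega$. If instead $\ell^\xi\delta$ is a successor, every $r(\xi)$ can be replaced by its predecessor without changing anything, so the supremum is harmless.
\end{itemize}

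The main obstacle is the case where $\ell^\xi\delta$ is a limit of countable cofinality. Here I would copy Case~1 of the $\mathcal{I}_\lambda$ proof. Split according to whether $\mathrm{cf}(\delta)\le\kappa$ or not, using Lemmas~\ref{cf >} and~\ref{lem 3} applied to $\delta$. When $\mathrm{cf}(\ell^\xi\delta)\le\kappa$, fix cofinal sequences $\langle\delta_\xi(\eta):\eta<\mathrm{cf}(\ell^\xi\delta)\rangle$ and enlarge the colour to a finite subset of $\lambda\times\kappa$, recording which $\delta_\xi(\eta)$ bounds $r(\xi)$ from below. There are still only $\kappa$ colours, and on the homogeneous set the lower bounds are literally the same. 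Only the club part then needs to be intersected, and (a) handles that. When $\mathrm{cf}(\ell^\xi\delta)>\kappa$ for all $\xi$, (b) works directly. In both cases we obtain one neighbourhood $O=(C\cup\{\alpha\})\cap\bigcap_{\xi\in B}(\beta_\xi,\ell^\xi\delta]_\xi^{\mathfrak{X}}$ with $O\subseteq\lbr p_{i_n}\to\lozenge p_{i_m}\rbr$ for all $n<m$.

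Finally, $O$ is open and contains $\alpha$. Since $\alpha\models\lozenge p_{i_0}$, pick $\beta_0\in O\setminus\{\alpha\}$ with $\beta_0\models p_{i_0}$. Then $\beta_0\models\lozenge p_{i_1}$, and $O$ is a neighbourhood of $\beta_0$, so there is $\beta_1\in O\setminus\{\beta_0\}$ with $\beta_1\models p_{i_1}$. We also want $\beta_1<\beta_0$. To ensure this, intersect $O$ with the ${\tau_c}_{+\lambda}$-open set $[0,\beta_0]$; this set is open because $\tau_c$ refines the interval topology, which contains the left topology. Iterating gives a strictly decreasing $\omega$-sequence of ordinals, which is a contradiction.
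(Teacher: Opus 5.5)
Your overall strategy is the paper's. You colour pairs $i<j$ by the finite shape of a basic neighbourhood witnessing $\Box(p_i\to\lozenge p_j)$ at $\alpha$. You then apply Erd\H{o}s--Rado to get a homogeneous set of size $\kappa^+$ and merge countably many witnesses. For the $B_r$ part you either fix the lower endpoints via cofinal sequences of length $\le\kappa$ or take suprema when the cofinality exceeds $\kappa$. Finally you run the descending-chain argument. The only organisational difference is that you split globally on $\mathrm{cf}(\delta)$ via Lemmas~\ref{cf >} and~\ref{lem 3}. The paper instead splits the coordinates into $\mathcal{B}_{\le\kappa}$ and $\mathcal{B}_{>\kappa}$ inside a single colouring, which makes those lemmas unnecessary. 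You also rightly allow $\xi=0$ in $\mathrm{dom}(r)$, which the paper's $\mathcal{B}$'s leave out. Your parenthetical about $\ell^\lambda\rho_{\tau_c}\alpha=0$ is superfluous: in that case $\rho_\lambda\alpha=0$, so $\alpha$ would be isolated.

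There is, however, one false step. For a club $C\subseteq\alpha$, the set $C\cup\{\alpha\}$ is in general \emph{not} $\tau_c$-open. Take $\alpha=\omega_2$ and $C=[\omega_1,\omega_2)$: the point $\omega_1\in C$ has uncountable cofinality, yet $C\cap\omega_1=\emptyset$. So your merged set $O$ need not be open. The step ``$O$ is a neighbourhood of $\beta_0$'' then fails if $\beta_0$ happens to be such a point, which breaks the descending chain.

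The repair is short, and either of two routes works:
\begin{itemize}
\item Keep the witnesses as genuine $\tau_c$-open sets $U_{ij}\ni\alpha$, and use that $\tau_c$ is closed under countable intersections, since the club filter at each point of uncountable cofinality is countably complete.
\item Replace $C$ by its set $C'$ of limit points below $\alpha$. Then $C'\subseteq C$ is still club in $\alpha$, and $C'\cup\{\alpha\}$ is $\tau_c$-open, because $C'\cap\gamma$ is club in $\gamma$ for every $\gamma\in C'$ of uncountable cofinality.
\end{itemize}
The paper's proof also phrases the witnesses via clubs and then says ``as before'', so it tacitly needs the same repair.
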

\begin{proof}
	Assume not, so, for some $\Theta>0$, there is a model $\mathfrak{M}$ based on $(\Theta, {\tau_c}_{+\lambda})$ and $\alpha<\Theta$ such that $\mathfrak{M},\alpha\models\Gamma$. Let $\chi>(2^\kappa)^+ $ be a large enough regular cardinal and let $\lhd$ be a well-ordering of $\mathcal{H}(\chi).$
	
	First note that for any $\mu>0$ we have $\mathrm{cf}(\alpha)=\Omega_\mu$ if and only if $\rho_{\tau_c}(\alpha)=\mu$ (see Example \ref{exam ord}).
	Also, since $\alpha$ should be a limit point, we have $\mathrm{cf}(\alpha)>\aleph_0$.
	So let $\mathrm{cf}(\alpha)=\Omega_\mu$ for some $\mu>0$.
	
	For each $\xi<\lambda$, we have $\ell^\xi\rho_{\tau_c}(\alpha) =\ell^\xi\mu$.
	Let
	$$\mathcal{B}_{\leq\kappa}=\{0<\xi<\lambda \; : \;  \mathrm{cf}(\ell^\xi\mu)\leq \kappa\},$$
	and
	$$\mathcal{B}_{>\kappa}=\{0<\xi<\lambda \; : \;  \mathrm{cf}(\ell^\xi\mu)>\kappa\}.$$
	For each $\xi\in \mathcal{B}_{\leq \kappa}$ fix $\langle\alpha_\xi(\eta) \; : \; \eta<\mathrm{cf}(\ell^\xi\mu)\rangle$ as an increasing and cofinal sequence in $\ell^\xi\mu$.
	Define the colouring
	$\mathbf{c}:[(2
	^\kappa)^+]^2 \to [\mathcal{B}_{\leq\kappa}\times \kappa]^{<\omega} \times [\mathcal{B}_{>\kappa}]^{<\omega}$
	by
	$$\mathbf{c}(i,j)= (B_0(i, j),B_1(i, j)),$$ where $(B_0(i, j),B_1(i, j))$ is the $\lhd$-least pair such that  for some club $C\subseteq\alpha$, and some sequence
 $\langle \beta_\xi \; : \; \xi \in B_1(i, j) \rangle$ with $\beta_\xi < \ell^\xi\mu$,   for  each $\gamma\neq\alpha$ in
	$$ C\cap \bigcap_{(\xi,\eta)\in B_0(i, j)}(\alpha_\xi(\eta),\ell^\xi\mu]_\xi^{\tau_c} \cap  \bigcap_{\xi\in B_1(i, j)}(\beta_\xi,\ell^\xi\mu]_\xi^{\tau_c},$$
	we have $\mathfrak{M}, \gamma \models p_i\to \lozenge p_j$.
	
	By the Erd\H{o}s--Rado theorem, we can find $S\subseteq (2^\kappa)^+$ of size $\kappa^+$ and $(B_0,B_1)$ such that $\mathbf{c}(i,j)=(B_0, B_1)$ for all $i<j\in S$.
	The rest is gone as before. Fix some $i_0<i_1<\dots$ in $S$. For $n<m<\omega$ and $\xi\in B_1$ let $\beta_\xi(n,m) <\ell^\xi\mu$ and $C(n,m)\subseteq \alpha$ a club witness
	$\mathbf{c}(i_n,i_m)=(B_0,B_1)$.
	Let $\beta_\xi= \sup_{n<m<\omega}\beta_\xi(n,m)<\ell^\xi\mu$ and
	$C=\bigcap_{n<m<\omega}C(n,m)\subseteq \alpha$. Then $C$ is a club.
	Now for all $n<m<\omega$,
	$$ [0,\alpha) \cap C\cap \bigcap_{(\xi,\eta)\in B_0}(\alpha_\xi(\eta),\ell^\xi\mu]_\xi^{\tau_c} \cap \bigcap_{\xi\in B_1}(\beta_\xi,\ell^\xi\mu]_\xi^{\tau_c} \; \subseteq \; \lbr p_{i_n}\to \lozenge p_{i_m}\rbr.$$
	Then we get the desired contradiction as before.
\end{proof}

\section{Strong Completeness of $\mathsf{GL}(\mathbb{P}$)}\label{sec:bouquet}

In this section we introduce  natural classes of topological spaces that yield strong completeness of $\mathsf{GL}(\mathbb{P})$ and $\mathsf{GL}.3(\mathbb{P})$ for uncountable languages $\mathbb{P}$. This construction generalizes the notion of an $\omega$-bouquet, originally defined in \cite{aguilera2017strong} to establish the strong completeness of $\mathsf{GL}$.

\subsection{$\lambda$-Bouquets}

We define $\lambda$-bouquets, which are reminiscent to Kripke frames with a slightly different interpretation rule and ordering of the nodes. One loosen the requirement for a modal formula $\Box\varphi$ to be satisfied. The idea behind the original definition of $\omega$-bouquet in \cite{aguilera2017strong} is that if a node of the tree has limit rank, then it is enough to satisfy $\varphi\land \Box\varphi$ at all but finitely many immediate successors. Dually $\Diamond \varphi$ hods if $\varphi\lor\Diamond\varphi$ holds at infinitely many immediate successors.

We slightly step away from the original approach, and let the above mentioned satisfaction definition happen at \emph{any} node with infinitely many immediate successors. In the $\omega$-case this does not entail any practical differences, but in the uncountable it slightly simplifies the things. If one is to follow the original approach, then the same proofs works with only cosmetic changes.
We further extend the satisfaction relation to the case of $\lambda$-many immediate successors. We say that $\Box \varphi$ holds if $\varphi\land \Box\varphi$ holds in a \emph{co-bounded subset} (i.e. the complement of a bounded subset) of the node's immediate successors, dually $\Diamond \varphi$ holds if $\varphi\lor \Diamond\varphi$ holds in a cofinal set of the node's immediate successors. Note that if $\lambda$ is a singular cardinal, then being cofinal is not permutation invariant. To amend this we additionally fix an enumeration of the immediate successors of each node. Moreover, one can relativise the definition by taking some other than co-bounded filter $F_\lambda$ on $\lambda$.

\begin{dfn}\label{def:lambda-bouquet}
	Let $\lambda>\aleph_0$ be a cardinal.
	A tree $(T,<_T)$ is called \textit{$\lambda$-bouquet} if:
  \begin{itemize}
		\item it is converse well-founded;
		\item if $t=\root(T)$, then $|\Suc_T(t)|=\lambda$,
		\item for every $s\in T$ with $t <_T s$, $|\Suc_T(s)|<\lambda$.
	\end{itemize}
\end{dfn}

\begin{dfn}\label{lambda tree}
  Let $T$ be a $\lambda$-bouquet tree. For each $w\in T$, let $e_w:|\Suc_T(w)|\to\Suc_T(w)$ be some bijective enumeration, if there is now risk of confusion we simply write $w_i$ instead of $e_w(i)$. Let \emph{$\text{Card}$} denote the class of all infinite cardinals and $\fil = \langle F_\kappa\subset \mathcal{P}(\kappa):\kappa \in \text{Card} \cap \lambda^+\rangle$ be a sequence of filters on infinite cardinals below $\lambda^+$. We define a topology $\sigma_T^\fil$ on $T$ inductively as follows:
	\begin{description}
    \item[$(*)_0$] Let $\sigma^\fil_0$ be the upset topology on $T$.
		\item[$(*)_1$] Define $\sigma^\fil_1$ as the least topology on $T$ containing $\sigma^\fil_0$ and
		all sets of the form
		$$\{w\}\cup \bigcup_{i\in D} T_{w_i}$$
    where $w\in T$ and $|\Suc(w)|=\kappa\ge\aleph_0$, $D\in F_\kappa$, and
    $w_i=e_w(i)$ for all $i<\kappa$.
		\item[$(*)_2$] Let $\sigma^\fil_2$ be the least topology on $T$ containing $\sigma^\fil_1$ and all sets of the form
			$$\{w\}\cup \bigcup_{i\in D} A_i$$
      where $w\in T$ and $|\Suc(w)|=\kappa\ge\aleph_0$, $D\in F_\kappa$, and
		each $A_i\subseteq T_{w_i}$ is $\sigma^\fil_1$-open with $w_i\in A_i$.
		\item[$(*)_{n+1}$] Given $\sigma^\fil_{n}$, define $\sigma^\fil_{n+1}$ analogously to the construction of $\sigma^\fil_2$ from $\sigma^\fil_1$.
		\item[$(*)$] Finally, let $\sigma^\fil_T$ be the topology defined based on $\bigcup_{n<\omega}\sigma^\fil_n$.
	\end{description}
\end{dfn}

Note that the rank of each point $w$ remains unchanged throughout the construction; consequently, its rank with respect to $\sigma_T^\fil$ coincides with its rank with respect to $\sigma_0^\fil$.

\begin{dfn}
	A topological space $(T,\sigma)$ is a \textit{$(\fil,\lambda)$-bouquet space} if:
	\begin{enumerate}
		\item $(T,<_T)$ is a $\lambda$-bouquet tree, for some $<_T$,
		\item $\sigma=\sigma_T^\fil$.
	\end{enumerate}
  We say that $(T,\sigma)$ is a $\lambda$-bouquet, if $\fil$ consists of co-bounded filters, i.e.
  \[(\forall \kappa \in \text{Card} \cap \lambda^+)(\forall A\subset \kappa) \big(A\in F_\kappa \leftrightarrow \exists \alpha<\kappa, \kappa\setminus \alpha \subset A \big).\]

  We also occasionally identify $(T,<)$ or even $T$ with $(T,\sigma)$, if there is no risk of confusion.
\end{dfn}

Clearly, every $(\fil,\lambda)$-bouquet space $(T,\sigma)$ is scattered.

As mentioned above, if $\lambda=\aleph_0$, then $(T,\sigma)$ is slightly differs from the $\omega$-bouquet defined in \cite{aguilera2017strong}, however the difference has no practical consequences and all proofs in \cite{aguilera2017strong} work with our definition as well.

Although we gave a topological definition for $\lambda$-bouquets, one can construe them as a modification of Kripke frames. The following proposition states the similarity of these structures in a precise way.

\begin{prop}\label{prop:kripke-topology-eq}
  Let $(T,<)$ be a  $\lambda$-bouquet tree with root $r$ with the corresponding $(\fil,\lambda)$-bouquet $(T,\sigma)$ and let $\inter\cdot:\mathbb P \to \mathcal{P}(T)$ be a valuation,
  then for any formula $\theta$ and $w\in T$, letting $\model=((T,\sigma),\inter\cdot)$, we have $\model,w\models \theta$ if and only if:
  \begin{itemize}
    \item $\theta =  \Diamond \varphi$ and either
    \begin{itemize}
      \item $\Suc_T(w)$ is finite and $\model, u \models \varphi \lor \Diamond\varphi$
      for some immediate successor $u$ of $w$, or
    \item $|\Suc_T(w)| = \kappa \ge\aleph_0$ and 
      $\set{i<\kappa :  \model, e_w(i) \models \varphi \lor \Diamond\varphi}\in (F_\kappa)^+$;
    \end{itemize} or
    \item $\theta =  \Box \varphi$ and either
    \begin{itemize}
      \item $\Suc_T(w)$ is finite and $\model, u \models \varphi \land \Box\varphi$
      for all immediate successors $u$ of $w$, or
      \item $|\Suc_T(w)| = \kappa \ge\aleph_0$ and
      $\set{i<\kappa : \model, e_w(i) \models \varphi \land \Box\varphi}\in F_\kappa$;
    \end{itemize}
  \end{itemize}
  Where $e_w:|\Suc_T(w)|\to \Suc_T(w)$ is as in Definition~\ref{lambda tree}.
\end{prop}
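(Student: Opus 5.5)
The plan is to reduce the statement to a description of a neighbourhood base at each node of $(T,\sigma^\fil_T)$, and then read off the clauses for $\Diamond$ directly. The clauses for $\Box$ follow by duality: $\Box\varphi=\neg\Diamond\neg\varphi$, and $\set{i:\model,e_w(i)\models\varphi\land\Box\varphi}\in F_\kappa$ holds exactly when its complement $\set{i:\model,e_w(i)\models\neg\varphi\lor\Diamond\neg\varphi}$ is not in $(F_\kappa)^+$. Throughout I use that every $T_u$ is an upset and hence $\sigma_0^\fil$-open. So $T_u$ is open in $\sigma^\fil_T$, and for $x\in T_u$ and any set $X$ we have $x\in d(X)$ iff $x\in d(X\cap T_u)$. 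Moreover every open set containing $w$ is contained in $\{w\}\cup T_w$ up to intersecting with $T_w$. Since $\inter{\Diamond\varphi}=d(\inter{\varphi})$, what must be shown is: $w\in d(X)$ iff the set of $i$ with $e_w(i)\in X\cup d(X)$ is $F_\kappa$-positive, respectively nonempty when $\Suc_T(w)$ is finite.

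The key lemma is a neighbourhood base at $w$, proved by induction on $n$ over the generators of $\sigma^\fil_n$.

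(a) If $|\Suc_T(w)|=\kappa\ge\aleph_0$, then the sets $\{w\}\cup\bigcup_{i\in D}A_i$ form a neighbourhood base at $w$, where $D\in F_\kappa$ and each $A_i\subseteq T_{w_i}$ is open with $w_i\in A_i$.

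(b) If $\Suc_T(w)$ is finite, then every open set containing $w$ contains all of $T_w$.

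For (b), I would check that each generator of $\sigma^\fil_n$ containing $w$ contains $T_w$. Upsets clearly do. A generator based at $w$ itself cannot occur, since $w$ has finitely many successors. A generator based at a proper ancestor $v$ contains $w$ only through some $A_i\ni w$ that is $\sigma^\fil_{n-1}$-open, so by induction $A_i\supseteq T_w$. Generators based at descendants of $w$ do not contain $w$. Finite intersections preserve this property, and so do unions in the final topology. For (a), the given sets are open: they are generators in $\sigma^\fil_{n+1}$ once the $A_i$ lie in $\sigma^\fil_n$, and one passes to the limit topology by refining each $A_i$ into some $\sigma^\fil_n$ with $n$ depending on $i$, a point that needs care. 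Conversely, every generator containing $w$ contains such a set, by the same three-way case split as in (b). Finite intersections also contain such a set, because $F_\kappa$ is closed under finite intersections and intersections of open neighbourhoods of $w_i$ inside $T_{w_i}$ are again such neighbourhoods.

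With the lemma in hand, the $\Diamond$ clauses follow. In the finite case, by (b) the sets $\{w\}\cup T_w$ (more precisely their open supersets) witness that $w\in d(X)$ iff $X\cap(T_w\setminus\{w\})\neq\emptyset$. By scatteredness and the convention for immediate successors, this holds iff some $u\in\Suc_T(w)$ satisfies $u\in X\cup d(X)$. I would argue this by taking a $<_T$-minimal point of $X$ above $w$ and noting that its ancestor $u$ then lies in $X$ or in $d(X)$ via (b) applied to the subtree. In the infinite case, let $S=\set{i<\kappa: w_i\in X\cup d(X)}$. If $S\in(F_\kappa)^+$, then any basic neighbourhood $\{w\}\cup\bigcup_{i\in D}A_i$ has some $i\in D\cap S$, and $A_i$ meets $X$ because $w_i\in X$ or $w_i\in d(X\cap T_{w_i})$. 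Hence $w\in d(X)$. If $S\notin(F_\kappa)^+$, pick $D\in F_\kappa$ disjoint from $S$, and for $i\in D$ choose an open $A_i\ni w_i$ inside $T_{w_i}$ missing $X$. The resulting basic set witnesses $w\notin d(X)$.

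The main obstacle is the neighbourhood-base lemma, and specifically the bookkeeping across the hierarchy $\sigma^\fil_n$. One must ensure that the set $\{w\}\cup\bigcup_{i\in D}A_i$ is open in $\sigma^\fil_T$ when the $A_i$ are merely $\sigma^\fil_T$-open rather than open at a uniform level $n$, and one must establish (b), which prevents shrinking at finitely branching nodes. I would handle the first point by noting that each $A_i$ is a union of finite intersections of generators. It therefore suffices to replace each $A_i$ by a basic subneighbourhood of $w_i$ lying in some $\sigma^\fil_{n_i}$, and then to show by induction on rank that the basic sets from (a) at $w$ can always be taken inside a single level.
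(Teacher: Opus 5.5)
Your lemma (b) is correct for the topology as defined, but it is exactly what makes the next step fail. To go from ``$X$ meets $T_w\setminus\set{w}$'' to ``some $u\in\Suc_T(w)$ lies in $X\cup d(X)$'' you apply (b) at $u$. That is only legitimate when $u$ is itself finitely branching. If $u$ has infinitely many successors, a point of $X$ above $u$ can be cut off by a basic neighbourhood of $u$.

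Concretely, let $w$ be a successor of the root with a single successor $u$. Let $u$ have successors $u_n$ ($n<\omega$), let $u_0$ have a single successor $v$, let all other new nodes be leaves, and put $\inter{p}=\set{v}$. By (b), every neighbourhood of $w$ contains $v$, so $\model,w\models\Diamond p$. But $\set{u}\cup\bigcup_{n\ge 1}T_{u_n}$ is a $\sigma^\fil_1$-open neighbourhood of $u$ missing $v$, so $\model,u\not\models p\lor\Diamond p$.

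So this step cannot be repaired: the finite-branching clause of the statement is false for general $\lambda$-bouquet trees. The paper's proof dismisses this case as ``clear''. The clause does hold when every node above a finitely branching node is finitely branching, as in the trees built in the completeness proof. It also holds if it is weakened to ``$\model,v\models\varphi$ for some $v>_T w$''.

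In the infinite case your argument is the paper's: reduce to neighbourhoods $\set{w}\cup\bigcup_{i\in D}A_i$ and split on $F_\kappa$-positivity. The direction $S\in(F_\kappa)^+\Rightarrow w\in d(X)$ is fine. The converse glues arbitrary $\sigma^\fil_T$-open $A_i$. Your proposed fix, that neighbourhoods at $w$ can be taken inside a single level $\sigma^\fil_n$, fails when the nesting depth of infinitely branching nodes above $w$ is unbounded.

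Here is a counterexample. Let $R_1$ have root $r_1$ with successors $a_j$ ($j<\omega$), where only $a_0$ has a successor $b$, and put $X_1=\set{b}$. Let $R_{k+1}$ have root $r_{k+1}$ with $\omega$ successors, each heading a copy of $R_k$, and let $X_{k+1}$ be the union of the copies of $X_k$. By induction on $k$, some $\sigma^\fil_k$-open set containing $r_k$ misses $X_k$, while every $\sigma^\fil_{k-1}$-open set containing $r_k$ meets it. Now let $w$ have successors $r_k$ ($1\le k<\omega$) heading copies of $R_k$, and let $X=\bigcup_k X_k$.

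No $r_k$ lies in $X\cup d(X)$. Yet every $\sigma^\fil_n$-open set ($n\ge 1$) containing $w$ contains $\sigma^\fil_{n-1}$-open neighbourhoods of cofinitely many $r_k$, and those with $k\ge n$ meet $X$. Hence $w\in d(X)$ in the topology generated by $\bigcup_n\sigma^\fil_n$, so the infinite clause fails too. The paper's proof takes the same gluing step silently (``$\bigcup_{i\in C}U_i$ is a punctured neighbourhood of $w$'').

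What is needed is a topology closed under gluing. For example, declare $U$ open iff, for every $x\in U$, the set $\set{i<\kappa : e_x(i)\in U}$ belongs to $F_\kappa$ when $|\Suc_T(x)|=\kappa\ge\aleph_0$, and $\Suc_T(x)\subseteq U$ when $\Suc_T(x)$ is finite. For this topology (a) holds as stated, and (b) is replaced by the base $\set{w}\cup\bigcup_{u\in\Suc_T(w)}A_u$. Your argument then yields both clauses.
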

\begin{proof}
  The proof is by simultaneous induction on the formulas complexity and the rank of $w$.
  Assume $\theta = \Diamond\varphi$ and $\mathfrak{M},w\models \theta$ for some $w\in T$. If $\Suc_T(w)$ is finite, then the claim is clear, so suppose that it is infinite and $|\Suc_T(w)|=\kappa$. For the sake of contradiction assume $\set{i<\kappa : e_w(i) \models \varphi \lor \Diamond\varphi} \cap C = \emptyset$ for some $C\in F_\kappa$, thus $C\subset \set{i<\kappa :  e_w(i) \models \neg\varphi \land \Box\neg\varphi}$. It follows that for each $i\in C$, $\mathfrak{M},e_w(i) \models \neg \varphi$ and there is a neighbourhood $U_i$ of $e_w(i)$ such that $U_i\setminus\set{e_w(i)}\subset \inter{\neg\varphi}$. Thus, $\bigcup_{i\in C} U_i$ is a punctured neighbourhood of $w$ contained in $\inter{\neg \varphi}$, thus $\mathfrak{M},w\models \Box\neg\varphi$, a contradiction.

  For the opposite implication, assume  $|\Suc_T(w)| = \kappa \ge\aleph_0$ and the set
      $A=\set{i<\kappa :  \model, e_w(i) \models \varphi \lor \Diamond\varphi}$ belongs to  $(F_\kappa)^+$. Fix an open neighbourhood $U$ of $w$.
       Without loss of generality, $U=\{w\}\cup \bigcup_{i\in D} B_i$
      where  $D\in F_\kappa$, and
		each $B_i\subseteq T_{e_w(i)}$ is  open with $e_w(i)\in B_i$.

      Set
        \[
    Y_\varphi=\set{i<\kappa : \mathfrak{M},e_w(i) \models \varphi },\quad Y_{\Diamond\varphi}=\set{i<\kappa : \mathfrak{M},e_w(i) \models \Diamond\varphi },
  \]
      so that $A=Y_\varphi \cup Y_{\Diamond\varphi}.$
      If $Y_\varphi\in (F_\kappa)^+$, then $Y_\varphi \cap D \ne\emptyset,$ in particular,
        $\set{i <\kappa : e_w(i) \in U \land e_w(i)\models \varphi}\ne\emptyset$ and so $U\setminus\{w\}\cap \inter{\varphi}\ne\emptyset$.
  If $Y_{\Diamond\varphi}\in (F_\kappa)^+$ then $Y_{\Diamond\varphi} \cap D \ne\emptyset.$ Fix any $i$ in the intersection.
  If $|\Suc(e_w(i))|$ is finite, then $B_i \cap \inter{\varphi} \ne \emptyset$ and we are done. Otherwise,
 by the induction hypothesis, $B_i \cap \inter{\varphi}$  is in $F^+_{\kappa_i'}$ where $\kappa_i'=|\Suc(e_w(i))|$, hence again
   $U\setminus\{w\}\cap \inter{\varphi}\ne\emptyset$, and so $\model,w\models \Diamond\varphi$. The claim for
   $\theta=\Box\varphi$ follows.
\end{proof}

\begin{dfn}
	We call a $(\mathcal F,\lambda)$-bouquet $T$  {\it linear} if each node of the underlying tree has zero, one or infinitely many immediate successors. If additionally each $F\in \mathcal F$ is a non-principal ultrafilter, we call such bouquet an \emph{ultralinear $\lambda$-bouquet}.
\end{dfn}

\begin{prop}
The logic $\gl.3$ is sound with respect to the class of ultralinear $\lambda$-bouquets.
\end{prop}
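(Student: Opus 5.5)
Since every $(\fil,\lambda)$-bouquet space is scattered, Esakia's theorem gives that all theorems of $\gl(\mathbb P)$ are valid on every ultralinear $\lambda$-bouquet. Validity is preserved by modus ponens and necessitation, so it remains to show that every instance of the axiom $.3$,
\[
\Box(\Box\varphi \to \psi)\lor\Box(\Box\psi\land\psi \to \varphi),
\]
holds at every node $w$ of an ultralinear $\lambda$-bouquet model $\model$. I will prove this by induction on the rank of $w$, which is well defined because the tree is converse well-founded. The main tool is Proposition~\ref{prop:kripke-topology-eq}. Linearity says $w$ has zero, one, or infinitely many immediate successors, and in the infinite case $F_\kappa$ is an ultrafilter, so $(F_\kappa)^+=F_\kappa$. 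Hence both $\Diamond$ and $\Box$ have a uniform local reading: $\Diamond\chi$ holds at $w$ iff the set of successors satisfying $\chi\lor\Diamond\chi$ is ``large'', and $\Box\chi$ holds at $w$ iff the set of successors satisfying $\chi\land\Box\chi$ is ``large''. Here ``large'' means: contains the unique successor, or belongs to the ultrafilter $F_\kappa$.

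If $w$ has no successors, every $\Box$-formula holds at $w$ and we are done. Otherwise, suppose towards a contradiction that both disjuncts fail at $w$. Put $\alpha=\Box\varphi\land\neg\psi$ and $\beta=\Box\psi\land\psi\land\neg\varphi$; then $\model,w\models\Diamond\alpha\land\Diamond\beta$. By the local reading, the set $A$ of successors satisfying $\alpha\lor\Diamond\alpha$ is large, and so is the set $B$ of successors satisfying $\beta\lor\Diamond\beta$. This is the key point where ultralinearity is used: in the single-successor case $A=B$ is that successor, and in the infinite case $A\cap B\in F_\kappa$ because $F_\kappa$ is a filter. Either way there is one immediate successor $u$ satisfying $(\alpha\lor\Diamond\alpha)\land(\beta\lor\Diamond\beta)$. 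For a general $\fil$-bouquet with non-ultra filters, or with finitely many but at least two branches, this step fails, and this is exactly why $\gl.3$ is not sound there.

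It remains to rule out each of the four combinations at $u$.
\begin{itemize}
\item If $\model,u\models\Diamond\alpha\land\Diamond\beta$, then $u$ falsifies both disjuncts of $.3$ (the negation of $.3$ is equivalent to $\Diamond\alpha\land\Diamond\beta$). Since $u$ has smaller rank than $w$, this contradicts the induction hypothesis.
\item If $\model,u\models\alpha\land\Diamond\beta$, then $\Box\varphi$ holds at $u$ while $\Diamond\neg\varphi$ also holds at $u$ (as $\beta\to\neg\varphi$ and $\Diamond$ is monotone), a contradiction.
\item If $\model,u\models\beta\land\Diamond\alpha$, then $\Box\psi$ holds at $u$ while $\Diamond\neg\psi$ holds at $u$, a contradiction.
\item If $\model,u\models\alpha\land\beta$, then $u$ satisfies both $\neg\psi$ and $\psi$, a contradiction.
\end{itemize}
The cases only use the ordinary topological duality of $\Box$ and $\Diamond$ and monotonicity of $\Diamond$. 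The one delicate point is justifying the induction on rank: the successors of $w$ have strictly smaller rank, and by the remark after Definition~\ref{lambda tree} the rank in $\sigma^\fil_T$ agrees with the rank in the upset topology. I expect the intersection step in the previous paragraph to be the only substantive one.
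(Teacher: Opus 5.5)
Your proof is correct and follows essentially the paper's route: induction on rank, the local reading of $\Box$/$\Diamond$ from Proposition~\ref{prop:kripke-topology-eq}, and ultra-ness (or a unique successor) to make a single immediate successor representative. The difference is cosmetic. You argue contrapositively, intersecting the two $F_\kappa$-large sets witnessing $\Diamond\alpha$ and $\Diamond\beta$ and checking four cases, whereas the paper splits each successor into six cases and uses that one cell of this finite partition lies in the ultrafilter.

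One small caution: run the induction directly on the converse well-founded tree order, and drop the appeal to the remark that $\sigma^\fil_T$-ranks agree with upset ranks. You don't need that remark, and it is false in general. Under the co-finite filter, take a node with $\omega$ immediate successors, all leaves except one, which has a single leaf successor. This node has upset rank $2$ but $\sigma^\fil_T$-rank $1$, the same $\sigma^\fil_T$-rank as that successor.
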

\begin{proof}
  Recall that $\gl.3$ is the minimal set of formulas containing $\gl$ and $.3=\Box(\Box\varphi\to\psi)\lor\Box(\Box\psi\land\psi\to \varphi)$.
  $\gl$ is obviously sound for the ultralinear bouquets, since a refinement of a scattered space is scattered. We have to show that $T\models\Box(\Box \varphi\to\psi)\lor\Box(\Box\psi\land\psi\to\varphi)$ whenever $T$ is an ultralinear $\lambda$-bouquet. From here on we reason by induction on the rank of the node $x\in T$. Let us pick arbitrary $T$ and $\inter\cdot : \mathbb P \to \mathcal{P}(T)$. Until the end of the proof we can, thus, write $x\models \varphi$ instead of $(T,\inter\cdot),x\models \varphi$ without any risk of confusion. The base case of induction is trivial, since if $x$ has rank $0$, then $x\models \Box\bot$. Assume now that for any $y\in T_x$, $y\models \Box(\Box \varphi\to\psi)\lor\Box(\Box\psi\land\psi\to\varphi)$.
  
  \begin{description}
  	\item[Case 1] $x$ has one immediate successor $y$, then by the induction hypothesis on $y$ we have the following cases:
  	\begin{enumerate}
  		\item $y\models \Box(\Box\varphi\to\psi)$ and $y\models \neg\Box \varphi$;
  		\item $y\models \Box(\Box\varphi\to\psi)$ and $y\models \Box\varphi$ and $y\models \psi$;
  		\item $y\models \Box(\Box\varphi\to\psi)$ and $y\models \Box\varphi$ and $y\models \neg\psi$;
  		\item $y\models \Box(\Box\psi\land\psi\to\varphi)$ and $y\models \neg\Box \psi\lor\neg\psi$;
  		\item $y\models \Box(\Box\psi\land\psi\to\varphi)$ and $y\models \Box\psi\land\psi$ and $y\models \varphi$;
  		\item $y\models \Box(\Box\psi\land\psi\to\varphi)$ and $y\models \Box\psi\land\psi$ and $y\models \neg\varphi$;   \end{enumerate}
  	One can see that in cases (1), (2), (6) $x\models \Box(\Box\varphi\to\psi)$ and otherwise $x\models \Box(\Box\psi\land\psi\to\varphi)$.
  	\item[Case 2] $x$ has $\kappa$-many successors $\set{y_i}_{i<\kappa}$ where $\kappa\le\lambda$. For any $y\in \Suc_T(x)$ the same case distinction holds. For each $n\in[1,6]$ let $A_i=\set {i<\kappa: \text{(n) holds for } y_i}$. Clearly, $\bigcup A_i=\kappa$, moreover there is at leas one $i$ such that $A_i\in U_\kappa$, it follows from the fact that $\kappa\in U_\kappa$ and the fact that for any filter $U$ if $A\cup B\in U$, then $A\in U$ or $B\in U$. Thus, for $U_\kappa$-almost all successors of $x$, one of the (1)-(6) holds.
  \end{description}
\end{proof}

\subsection{Strong Completeness Results}

In this section we establish strong completeness of $\gl(\mathbb P)$ with respect to $\lambda$-bouquets and of $\gl.3(\mathbb P)$ with respect to ultralinear $\lambda$-bouquets, where $\lambda=|\mathbb P|$. 
Whenever possible, we present proofs that apply to both $\gl$ and $\gl.3$. In such cases, we typically denote by $\mathsf{L}$ a logic from the set $\set{\gl,\gl.3}$.

Until the end of the section we fix an increasing and continuous sequence $\langle \lambda_i  \; : \; i<\mathrm{cf}(\lambda)\rangle$ of limit ordinals such that:
	\begin{itemize}
		\item $\lambda_i<\lambda$, for all $i< \mathrm{cf}(\lambda)$;
		\item $\sup_i \lambda_i=\lambda$;
		\item if $\lambda=\mu^+$ is a successor cardinal, then $|\lambda_{i+1}-\lambda_i|=\mu$ for all $i$;
		\item if $\lambda$ is a limit cardinal, then each $\lambda_i$ is a cardinal.
	\end{itemize}
  And let $\mathbb{P}_i =\{p_\xi \; : \;  \xi<\lambda_i\}$ for each $i<\cof(\lambda)$. Note that for each $i<\cof(\lambda)$, we have $|\mathbb{P}_i|= |\lambda_i|<\lambda$.
  \begin{dfn}\label{def:FGamma-sequence}
    Let $\Gamma\subset \lang(\mathbb P)$ be a maximal $\mathsf L$-consistent set of formulas with $|\Gamma|=\lambda$, and $\Diamond\top\in\Gamma$.
    Let $F$ be a filter on $\lambda$. We say that a sequence $\tup{\Delta_\xi: \xi <\lambda}$ of sets of formulas is an \emph{$(\mathsf L, F,\Gamma)$-sequence} if for each $\xi<\lambda$:
  \begin{enumerate}
    \item if $\xi<\lambda_i$, then $\Delta_\xi\subset\lang(\mathbb P_i)$;\label{item:FGamma-sequence-3}
    \item if $\xi<\lambda_i$, then $|\Delta_\xi|<\lambda_i$;\label{item:FGamma-sequence-1}
    \item $\Delta_\xi$ is $\mathsf L$-consistent for all $\xi<\cof(\lambda)$;\label{item:FGamma-sequence-2}
  \end{enumerate}
  and
  \begin{enumerate}
    \item[(4)] For each $\psi$ such that $\Diamond \psi\in \Gamma$, we have $\set{\xi<\lambda : \psi \in \Delta_\xi}\in F^+$;\namedlabel{4}{item:FGamma-sequence-4}
    \item[(5)] For each $\varphi$ such that $\Box\varphi\in \Gamma$, we have $\set{\xi<\lambda : \Box\varphi,\varphi \in \Delta_\xi}\in F$.\namedlabel{5}{item:FGamma-sequence-5}
  \end{enumerate}
\end{dfn}

\begin{lem}\label{lemma:lottery}
  Let $\Gamma$ be an $\mathsf{L}$-consistent set of formulas of cardinality $\lambda$ such that $\Diamond\top\in\Gamma$. Let $\tup{\Delta_\xi:\xi<\lambda}$ be an $(\mathsf L, \fil,\Gamma)$-sequence. Assume that for each $\xi<\lambda$, there are:
  \begin{itemize}
    \item a $\lambda_i$-bouquet tree $(T_\xi,<_\xi)$ with the corresponding $(\fil,\lambda_i)$-bouquet space $(T,\sigma_\xi)$, for some $\lambda_i>\xi$;
    \item a model $\model_\xi=(T_\xi,\sigma_\xi,\inter\cdot_\xi)$ such that $\model_\xi,t_\xi\models \Delta_\xi$, where $t_\xi=\root(T_\xi)$;
  \end{itemize}

  Then there is a model $\model=(T,\sigma,\inter\cdot)$, where $(T,\sigma)$ is $(\fil,\lambda)$-bouquet space, such that $\model,t\models \Gamma$, where $t=\root(T)$. Moreover, if all $T_\xi$'s are linear bouquet trees, then so is $T$.
\end{lem}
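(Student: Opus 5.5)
The plan is to glue the given models under a new root. Let $t$ be a fresh point and let $T$ be the disjoint union of the trees $T_\xi$ ($\xi<\lambda$), with $t$ placed below every node. The order $<_T$ is the union of the orders $<_\xi$, together with $t<_T s$ for all $s\ne t$. Then $\Suc_T(t)=\set{t_\xi:\xi<\lambda}$, and we fix the enumeration $e_t(\xi)=t_\xi$. For nodes inside $T_\xi$ we keep the enumerations already used in $T_\xi$.

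First we check that $(T,<_T)$ is a $\lambda$-bouquet tree. It is converse well-founded, since a nonempty subset of $T$ either is $\set{t}$ or meets some $T_\xi$, and a maximal element there is maximal in $T$. The root has exactly $\lambda$ immediate successors. Every $s>_T t$ lies in some $T_\xi$, which is a $\lambda_i$-bouquet tree with $\lambda_i<\lambda$, so every node of $T_\xi$ has fewer than $\lambda$ immediate successors. We define the valuation by $\inter{p}=\bigcup_\xi\inter{p}_\xi$, with $t\in\inter p$ iff $p\in\Gamma$.

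The main technical step is to show that each $T_\xi$ is an open subspace of $(T,\sigma^\fil_T)$ and that the subspace topology on it is $\sigma_\xi$. The topology of Definition~\ref{lambda tree} is built locally: every generating set at a node $w\in T_\xi$ is contained in the upset of $w$, and it uses only $\Suc(w)$, $e_w$ and the filter $F_{|\Suc(w)|}$. All three of these are the same in $T$ and in $T_\xi$. An induction on $n$ then shows that $\sigma^\fil_n$ restricted to $T_\xi$ equals $(\sigma_\xi)_n$. The upset $T_\xi$ is $\sigma_0$-open, and generators located at $t$ intersect $T_\xi$ either in the empty set or in a set that is open in $T_\xi$. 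The one delicate point is that $\lambda_i$ and $\lambda$ both lie below $\lambda^+$, so the same filter sequence $\fil$ applies in both trees.

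Once this is in place, truth is preserved on open submodels, as noted in the preliminaries, so $\model,t_\xi\models\Delta_\xi$ for every $\xi$.

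It remains to verify $\model,t\models\Gamma$. We argue by induction on formulas $\theta$, showing that $\theta\in\Gamma$ iff $\model,t\models\theta$. Atoms hold by the definition of the valuation, and the Boolean cases hold by maximality of $\Gamma$. For $\theta=\Diamond\psi\in\Gamma$, condition~(4) gives that $\set{\xi:\psi\in\Delta_\xi}$ is $F_\lambda$-positive, and $\model,t_\xi\models\psi$ on this set. Proposition~\ref{prop:kripke-topology-eq} then gives $\model,t\models\Diamond\psi$. For $\Box\varphi\in\Gamma$, condition~(5) gives $\model,t_\xi\models\varphi\wedge\Box\varphi$ for $F_\lambda$-many $\xi$, so $\model,t\models\Box\varphi$ by the same proposition. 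The converses follow by maximality: if $\Diamond\psi\notin\Gamma$ then $\Box\neg\psi\in\Gamma$, so $t\models\Box\neg\psi$ and hence $t\not\models\Diamond\psi$; the case $\Box\varphi\notin\Gamma$ is dual. Note that $F_\lambda$ is a proper filter, so $F_\lambda\subseteq F_\lambda^+$ and these two cases are consistent.

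For the linearity clause, $t$ has $\lambda$ immediate successors, which is infinitely many. Every other node keeps its number of immediate successors from $T_\xi$, so $T$ is linear whenever all the $T_\xi$ are.
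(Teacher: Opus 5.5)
Your proposal is correct and follows the paper's route: form the lottery sum of the $T_\xi$ under a fresh root $t$ with the induced valuation, then verify $\model,t\models\Gamma$ by induction on formulas using Proposition~\ref{prop:kripke-topology-eq} together with conditions (4) and (5). The paper leaves implicit what you spell out: that each $T_\xi$ is an open subspace of $(T,\sigma^\fil_T)$ carrying the topology $\sigma_\xi$ (so that $\model,t_\xi\models\Delta_\xi$ transfers from $\model_\xi$ to $\model$), the choice of enumeration $e_t(\xi)=t_\xi$ at the root, and the use of maximality of $\Gamma$ for the converse directions of the induction.
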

\begin{proof}

  Let $T=\bigoplus_{\xi<\lambda}T_\xi$ be the lottery sum of $T_\xi$'s which is defined as follows:
	\begin{itemize}
		\item $root(T)=t$, for some new $t$ which is not in any $T_\xi$,
		\item $Suc_T(t) = \{t_\xi \; : \;  \xi<\lambda\}$,
		\item $T_{t_\xi} =T_\xi$, for all $\xi<\lambda$.
	\end{itemize}
  Clearly, $T$ is a $\lambda$-bouquet tree and if all $T_\xi$' are linear bouquet trees, then so is $T$.
	Let $(T,\sigma)$ be the associated $\lambda$-bouquet space of $T$, and define the model
	$\mathfrak{M}=(T,\sigma,\lbr\cdot\rbr)$, where the valuation is given by
	$\lbr p\rbr=\lbr p\rbr_*\cup \bigcup_{\xi<\lambda}\lbr p\rbr_\xi$
  for each $p\in\mathbb{P}$, with $\lbr p\rbr_*=t$ iff $p\in\Gamma$.

	We show now that $\mathfrak{M},t\models\Gamma$.
  The proof proceeds by induction on the complexity of formulas. The base case and Boolean connectives are straightforward, whereas the modal cases follow from Proposition~\ref{prop:kripke-topology-eq} and \eqref{item:FGamma-sequence-4} and \eqref{item:FGamma-sequence-5} of Definition~\ref{def:FGamma-sequence}.
\end{proof}

Until the rest of the section we fix $\fil = \set{F_\kappa:\kappa\in\mathrm{Card}}$ to be the class of the co-bounded filters and $\mathcal U = \set{U_\kappa:\kappa\in \mathrm{Card}}$ to be a class of non-principal ultrafilters with $U_\kappa\supset F_\kappa$.

\begin{lem}\label{lemma:gl-FGammma}
  For any maximal $\gl$-consistent set of $\lang(\mathbb P)$-formulas $\Gamma$, if $\Diamond\top\in\Gamma$, then there is a $(\gl,F_\lambda,\Gamma)$-sequence.
\end{lem}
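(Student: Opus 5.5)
We build the sequence by hand, one $\Delta_\xi$ for each $\xi<\lambda$. Each $\Delta_\xi$ will consist of one ``witness'' formula $\psi$ with $\Diamond\psi\in\Gamma$, together with every boxed requirement $\Box\varphi\in\Gamma$ whose propositional variables all have index $<\xi$.

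\textbf{Setup.} Let $\Psi=\set{\psi : \Diamond\psi\in\Gamma}$. Since $\Diamond\top\in\Gamma$, $\Psi$ is nonempty, and $|\Psi|\le\lambda$. Using a bijection $\lambda\to\lambda\times\lambda$, fix a map $f:\lambda\to\Psi$ such that $f^{-1}(\psi)$ is unbounded in $\lambda$ for every $\psi\in\Psi$. For a formula $\chi$, let $v(\chi)$ be the least ordinal $\eta$ such that every variable $p_\zeta$ occurring in $\chi$ has $\zeta<\eta$. Since $\chi$ has finitely many variables, $v(\chi)<\lambda$. Now put
\[
\Delta_\xi=\set{\psi_\xi}\cup\set{\varphi,\Box\varphi : \Box\varphi\in\Gamma,\ v(\varphi)\le\xi},
\]
where $\psi_\xi=f(\xi)$ if $v(f(\xi))\le\xi$, and $\psi_\xi=\top$ otherwise.

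\textbf{Conditions (1) and (2).} If $\xi<\lambda_i$, every formula in $\Delta_\xi$ uses only variables $p_\zeta$ with $\zeta<\xi<\lambda_i$, so $\Delta_\xi\subset\lang(\mathbb P_i)$. Its cardinality is at most $|\xi|+\aleph_0$. When $\lambda$ is a limit cardinal, each $\lambda_i$ is a cardinal, hence an uncountable cardinal above $\xi$, so $|\xi|+\aleph_0<\lambda_i$. When $\lambda=\mu^+$, the bound needed in the sense of Definition~\ref{def:FGamma-sequence} is only $|\Delta_\xi|\le|\lambda_i|$, which holds because $\lambda_i$ is an infinite limit ordinal above $\xi$. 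Handling this cardinality bookkeeping uniformly in the two cases for $\lambda$ is the step that needs the most care.

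\textbf{Condition (4).} Take $\Diamond\psi\in\Gamma$. Whenever $\xi\in f^{-1}(\psi)$ and $\xi\ge v(\psi)$, we have $\psi_\xi=\psi\in\Delta_\xi$. Since $f^{-1}(\psi)$ is unbounded, this set of $\xi$ is unbounded, hence lies in $F_\lambda^+$.

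\textbf{Condition (5).} Take $\Box\varphi\in\Gamma$. Then $\varphi,\Box\varphi\in\Delta_\xi$ for all $\xi\ge v(\varphi)$, which is a co-bounded set, hence in $F_\lambda$.

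\textbf{Condition (3), consistency.} This is where the logic is used. Suppose some $\Delta_\xi$ is $\gl$-inconsistent. Then there are finitely many $\Box\varphi_1,\dots,\Box\varphi_n\in\Gamma$ such that
\[
\gl\vdash \bigwedge_k(\varphi_k\wedge\Box\varphi_k)\to\neg\psi_\xi .
\]
By necessitation and $\mathbf K$,
\[
\gl\vdash\Box\textstyle\bigwedge_k(\varphi_k\wedge\Box\varphi_k)\to\Box\neg\psi_\xi .
\]
Using $\gl\vdash\Box\varphi\to\Box\Box\varphi$, the antecedent follows from $\bigwedge_k\Box\varphi_k$, which is in $\Gamma$. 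Hence $\Box\neg\psi_\xi\in\Gamma$ by maximality. But $\Diamond\psi_\xi\in\Gamma$: either $\psi_\xi=f(\xi)\in\Psi$, or $\psi_\xi=\top$ and $\Diamond\top\in\Gamma$. This contradicts the consistency of $\Gamma$.
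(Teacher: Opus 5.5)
Your checks of (1), (4), (5) are correct. Your consistency argument for (3) is the same one the paper uses: every finite $\Delta'\subseteq\Delta_\xi$ satisfies $\Gamma\vdash\Diamond\bigwedge\Delta'$.

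The gap is in condition (2). Your $\Delta_\xi$ contains \emph{every} pair $\varphi,\Box\varphi$ with $\Box\varphi\in\Gamma$ and $v(\varphi)\le\xi$. So it is infinite for every $\xi\ge 1$: the box of any tautology in $p_0$ alone lies in $\Gamma$, e.g.\ $\Box(p_0\to p_0)$, $\Box((p_0\to p_0)\wedge(p_0\to p_0))$, and so on. But (2) demands $|\Delta_\xi|<\lambda_i$ whenever $\xi<\lambda_i$, and nothing in the fixed sequence excludes $\lambda_i=\omega$. For $\lambda=\aleph_\omega$ the natural choice $\lambda_n=\aleph_n$ has $\lambda_0=\omega$, and for $\lambda=\aleph_1$ one may take $\lambda_0=\omega$. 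Then (2) requires $\Delta_\xi$ to be finite for $1\le\xi<\omega$, and your construction violates this.

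So your step ``hence an uncountable cardinal above $\xi$'' is unjustified. In the successor case you replaced (2) by the weaker bound $|\Delta_\xi|\le|\lambda_i|$ rather than verifying it. That bound does yield (2) when $\lambda_i$ is not a cardinal, since then $|\lambda_i|<\lambda_i$, but not when $\lambda_i=\omega$. This is not merely formal. The proof of Theorem~\ref{mainthm} applies the induction hypothesis to the $\Delta_\xi$ because they are smaller, with finite sets of formulas as the base case.

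The paper avoids this by enumerating the box requirements as $\langle\varphi_\zeta:\zeta<\lambda\rangle$, with those in $\lang(\mathbb P_i)$ listed below $\lambda_i$. It then puts only $\varphi_\zeta\wedge\Box\varphi_\zeta$ for $\zeta<\xi$ into $\Delta_\xi$, so $|\Delta_\xi|\le|\xi|+1$.

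You can keep your variable-index device $v(\cdot)$, which is a real simplification: it makes (1) and (5) immediate and removes the need to align the enumerations with the blocks $\lambda_i$. Just add the same cap. Fix any enumeration $\langle\chi_\zeta:\zeta<\lambda\rangle$ of $\{\varphi:\Box\varphi\in\Gamma\}$, and let the boxed part of $\Delta_\xi$ consist only of $\chi_\zeta,\Box\chi_\zeta$ with $\zeta<\xi$ and $v(\chi_\zeta)\le\xi$. Then:
\begin{itemize}
\item $|\Delta_\xi|\le 2|\xi|+1$, which is finite for finite $\xi$. For infinite $\xi<\lambda_i$ it is at most $|\xi|$, and $|\xi|<\lambda_i$: if $\lambda_i$ is a cardinal this is because $\xi<\lambda_i$, and otherwise $|\xi|\le|\lambda_i|<\lambda_i$.
\item Condition (5) holds on the co-bounded set $\{\xi:\xi>\zeta,\ \xi\ge v(\chi_\zeta)\}$.
\item Conditions (1), (3), (4) go through unchanged, since you have only removed formulas.
\end{itemize}
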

\begin{proof}
  Let $\{\varphi_\xi \; : \;  \xi<\lambda\}$ and
	$\{\psi_\xi \; : \;  \xi<\lambda\}$ be a list of $\mathcal{L}(\mathbb{P})$ formulas such that for each $i< \mathrm{cf}(\lambda)$ the following conditions hold:
	\begin{itemize}
		\item $\{\psi_\xi \; : \;  \xi<\lambda\}$ lists all $\mathcal{L}(\mathbb{P})$-formulas $\psi$ such that $\lozenge\psi\in\Gamma$;
		\item if $\psi\in\mathcal{L}(\mathbb{P}_i)$, $i< \mathrm{cf}(\lambda)$ and $\lozenge\psi\in\Gamma$, then
		$\{\xi<\lambda_i \; : \;  \psi=\psi_i\}$ is unbounded in $\lambda_i$;
		\item $\{\varphi_\xi \; : \;  \xi<\lambda\}$ lists all formulas in $\mathcal{L}(\mathbb{P})$ with $\Box\varphi\in\Gamma$;
		\item if $\varphi\in \mathcal{L}(\mathbb{P}_i)$ and $\Box\varphi\in\Gamma$, then $\varphi=\varphi_\xi$ for a unique $\xi<\lambda_i$.
	\end{itemize}
  For each $\xi<\lambda$ set
	$$\Delta_\xi= \{\psi_\xi\} \cup \{\varphi_\zeta\wedge \Box\varphi_\zeta \; : \;  \zeta<\xi\}.$$
  Trivially, \eqref{item:FGamma-sequence-1} of Definition~\ref{def:FGamma-sequence} holds. Note that:
	\begin{description}
		\item[(a)] If $\xi<\lambda_i$, then $\Delta_\xi\subseteq \mathcal{L}(\mathbb{P}_i)$.
		\item[(b)] $\Delta_\xi$ is consistent. Indeed, for any finite subset $\Delta' =\{\psi_\xi\} \cup \{\varphi_{\zeta_i}\wedge \Box\varphi_{\zeta_i} \; : \;  i<n\}\subseteq\Delta_\xi$, we have $\Gamma\vdash\lozenge\bigwedge\Delta'$. Since $\Gamma$ is consistent, so is  $\Delta'$.
	\end{description}
  Thus, \eqref{item:FGamma-sequence-2} and \eqref{item:FGamma-sequence-3} hold. It is also clear from construction that \eqref{item:FGamma-sequence-4} and \eqref{item:FGamma-sequence-5} hold.
\end{proof}

\begin{lem}\label{lemma:gl3-FGammma}
  For any maximal $\gl.3$-consistent set of $\lang(\mathbb P)$-formulas $\Gamma$, if $\Diamond\top\in \Gamma$, then  there is a $(\gl.3, U_\lambda,\Gamma)$-sequence.
\end{lem}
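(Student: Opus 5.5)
The plan is to follow Lemma~\ref{lemma:gl-FGammma} closely: enumerate the relevant formulas, put $\Delta_\xi$ together from one ``diamond witness'' plus the accumulated boxed formulas, and replace the co-bounded filter by $U_\lambda$ only in the verification of \eqref{item:FGamma-sequence-4} and \eqref{item:FGamma-sequence-5}. I should say at the outset that I expect this plan to work only for singular $\lambda$, and that for regular $\lambda$ I believe the statement as worded may be false. The reason is the step I single out in the third paragraph.

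Fix enumerations $\{\varphi_\xi:\xi<\lambda\}$ of the $\varphi$ with $\Box\varphi\in\Gamma$ and $\{\psi_\xi:\xi<\lambda\}$ of the $\psi$ with $\Diamond\psi\in\Gamma$, respecting the stratification $\mathbb P_i$ exactly as in Lemma~\ref{lemma:gl-FGammma}. The boxed part is then easy. Put $\Phi_\xi=\{\varphi_\zeta\wedge\Box\varphi_\zeta:\zeta<\xi\}$. For a fixed $\varphi=\varphi_\zeta$, the set $\{\xi:\varphi,\Box\varphi\in\Delta_\xi\}$ contains the end segment $(\zeta,\lambda)$. That end segment lies in $F_\lambda\subseteq U_\lambda$, so \eqref{item:FGamma-sequence-5} holds. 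Conditions \eqref{item:FGamma-sequence-3} and \eqref{item:FGamma-sequence-1} follow from the stratified enumeration, as before.

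The main obstacle is \eqref{item:FGamma-sequence-4}. Because $U_\lambda$ is an ultrafilter, $U_\lambda^+=U_\lambda$, so for every $\psi$ with $\Diamond\psi\in\Gamma$ the set $X_\psi=\{\xi:\psi\in\Delta_\xi\}$ must lie in $U_\lambda$. Any two such sets $X_{\psi_1},X_{\psi_2}$ then meet in a $U_\lambda$-large set. Now a maximal $\gl.3$-consistent $\Gamma$ can contain both $\Diamond p$ and $\Diamond\neg p$: take a linear order whose points below the root alternate between $p$ and $\neg p$. In that case each $\Delta_\xi$ with $\xi\in X_p\cap X_{\neg p}$ contains both $p$ and $\neg p$, so it is inconsistent on a $U_\lambda$-large set. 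Axiom $.3$ does not help here. It only gives $\Diamond(\psi_1\wedge\Diamond\psi_2)\vee\Diamond(\psi_2\wedge\Diamond\psi_1)\vee\Diamond(\psi_1\wedge\psi_2)$, never $\Diamond(\psi_1\wedge\psi_2)$ outright. So no choice of enumeration can make all the $X_\psi$ lie in $U_\lambda$ while every $\Delta_\xi$ stays consistent.

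The only loophole is that \eqref{item:FGamma-sequence-2}, as worded, demands consistency only for $\xi<\cof(\lambda)$. My concrete route is therefore as follows.
\begin{enumerate}
\item For $\xi<\cof(\lambda)$, take $\Delta_\xi=\{\psi_\xi\}\cup\Phi_\xi$. This set is consistent by the argument in (b) of Lemma~\ref{lemma:gl-FGammma}.
\item For $\cof(\lambda)\le\xi<\lambda_i$, take $\Delta_\xi=\{\psi_\zeta:\zeta<\xi\}\cap\lang(\mathbb P_i)\cup\Phi_\xi$. This meets the cardinality bound \eqref{item:FGamma-sequence-1}, and each $\psi_\zeta$ lies in $\Delta_\xi$ for all sufficiently large $\xi$.
\end{enumerate}
When $\lambda$ is singular, the end segments used here belong to $F_\lambda\subseteq U_\lambda$, so \eqref{item:FGamma-sequence-4} holds and the lemma follows literally.

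When $\lambda$ is regular we have $\cof(\lambda)=\lambda$, so every $\Delta_\xi$ must be consistent and the obstruction above applies in full. In that case I expect the statement as worded to be false and not repairable inside this definition. The honest fix would be to change either \eqref{item:FGamma-sequence-4} or the intended filter, which the paper's wording does not allow. I would flag this to the authors rather than claim a proof. I regard this regular case as the decisive difficulty; the other verifications are routine.
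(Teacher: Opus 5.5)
Your counterexample to the literal wording is correct. With (4) read as $\psi\in\Delta_\xi$ and $U_\lambda^+=U_\lambda$, any $\Gamma$ containing $\Diamond p$ and $\Diamond\neg p$ forces $p,\neg p\in\Delta_\xi$ on a $U_\lambda$-large set. The paper's own $\Delta_\xi$ do not satisfy (4) literally either. But your proposal proves nothing usable. Your singular case rests on the evident typo $\xi<\mathrm{cf}(\lambda)$ in (3) and yields inconsistent $\Delta_\xi$ for $\xi\ge\mathrm{cf}(\lambda)$. The proof of Theorem~\ref{mainthm}, however, applies the induction hypothesis to every $\Delta_\xi$, $\xi<\lambda$, to obtain $\mathfrak M_\xi,t_\xi\models\Delta_\xi$. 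You were right that (4) must change, but you did not find the right change. What Lemma~\ref{lemma:lottery} actually needs, via Proposition~\ref{prop:kripke-topology-eq}, is that $\psi\lor\Diamond\psi$, not $\psi$, holds at $U_\lambda$-many roots $t_\xi$. The paper's proof builds exactly this, taking
\[
\Delta_\xi=\{\psi_\zeta\lor\Diamond\psi_\zeta:\zeta<\xi\}\cup\{\varphi_\zeta\land\Box\varphi_\zeta:\zeta<\xi\}.
\]
Then all the relevant index sets contain end segments $(\zeta,\lambda)\in F_\lambda\subseteq U_\lambda$, uniformly for regular and singular $\lambda$. The single-witness design you carried over from Lemma~\ref{lemma:gl-FGammma} is the wrong approach here. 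Once positivity means largeness, every diamond requirement must be met on a large set. That forces accumulation, and hence the weakening to $\psi\lor\Diamond\psi$.

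The substantive step is then the consistency of $\Delta_\xi$, and this is exactly where $.3$ is used, contrary to your remark that it ``does not help''. Suppose a finite part of $\Delta_\xi$ were inconsistent. Necessitation and $\Box\varphi\to\Box\Box\varphi$ give $\Box\bigvee_{j\in J}(\neg\psi_j\land\Box\neg\psi_j)\in\Gamma$ alongside $\bigwedge_{j\in J}\Diamond\psi_j\in\Gamma$. But $.3$ yields $\gl.3\vdash\Box\bigl((\Box p\land p)\lor(\Box q\land q)\bigr)\to\Box p\lor\Box q$ (and its finite generalisation), a contradiction. Indeed, the disjunction you derived, $\Diamond(\psi_1\land\Diamond\psi_2)\lor\Diamond(\psi_2\land\Diamond\psi_1)\lor\Diamond(\psi_1\land\psi_2)$, already implies $\Diamond\bigl((\psi_1\lor\Diamond\psi_1)\land(\psi_2\lor\Diamond\psi_2)\bigr)$, which is the joint satisfiability required. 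Semantically, in a finite linear model the immediate successor of the root satisfies $\psi\lor\Diamond\psi$ for every $\psi$ with $\Diamond\psi$ true at the root. It also satisfies $\varphi\land\Box\varphi$ for every $\varphi$ with $\Box\varphi$ true there. Hence $\Gamma\vdash\Diamond\bigwedge\Delta'$ for every finite $\Delta'\subseteq\Delta_\xi$, just as in part (b) of Lemma~\ref{lemma:gl-FGammma}.
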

\begin{proof}
  Let $\{\varphi_\xi \; : \;  \xi<\lambda\}$ and
	$\{\psi_\xi \; : \;  \xi<\lambda\}$ be a list of $\mathcal{L}(\mathbb{P})$ formulas such that for each $i< \mathrm{cf}(\lambda)$ the following conditions hold:
	\begin{itemize}
		\item $\{\psi_\xi \; : \;  \xi<\lambda\}$ lists all $\mathcal{L}(\mathbb{P})$-formulas $\psi$ such that $\lozenge\psi\in\Gamma$;
		\item if $\psi\in \mathcal{L}(\mathbb{P}_i)$ and $\Diamond\psi\in\Gamma$, then $\psi=\psi_\xi$ for a unique $\xi<\lambda_i$.
		\item $\{\varphi_\xi \; : \;  \xi<\lambda\}$ lists all formulas in $\mathcal{L}(\mathbb{P})$ with $\Box\varphi\in\Gamma$;
		\item if $\varphi\in \mathcal{L}(\mathbb{P}_i)$ and $\Box\varphi\in\Gamma$, then $\varphi=\varphi_\xi$ for a unique $\xi<\lambda_i$.
	\end{itemize}
  For each $\xi<\lambda$ set
	$$\Delta_\xi= \{\psi_\zeta \lor \Diamond\psi_\zeta:\zeta<\xi\} \cup \{\varphi_\zeta\wedge \Box\varphi_\zeta \; : \;  \zeta<\xi\}.$$

  Exactly as in the previous lemma, \eqref{item:FGamma-sequence-1} and \eqref{item:FGamma-sequence-2} hold. As of \eqref{item:FGamma-sequence-4} and \eqref{item:FGamma-sequence-5}, they follow from the assumption that $U$ extends the co-bounded filter. It is left to verify \eqref{item:FGamma-sequence-3} by showing that $\Delta_\xi$ is consistent for each $\xi<\lambda$. Assume otherwise, then there are finite $I, J\subset \xi$ such that:
    \[
      \gl.3\vdash \bigwedge_{i\in I} \varphi_i\land\Box\varphi_i \to \bigvee_{j\in J} \neg\psi_j\land\Box\neg\psi_j.
  \]
  Applying normality and transitivity, we get
   \[
      \gl.3\vdash \bigwedge_{i\in I} \Box\varphi_i \to \Box\bigvee_{j\in J} \neg\psi_j\land\Box\neg\psi_j.
  \]
  Since, $\bigwedge_{i\in I}\Box\varphi_i\in \Gamma$, we have $\Box\bigvee_{j\in J} \neg\psi_j\land\Box\neg\psi_j\in \Gamma$ as well.
  Thus,
  \begin{equation}\label{eq:inconsistent-formula}\tag{$\mathrel\#$}
  \bigwedge_{j\in J}\Diamond \psi_j\land\Box\bigvee_{j\in J} \neg\psi_j\land\Box\neg\psi_j \in \Gamma.
\end{equation}
We argue that \eqref{eq:inconsistent-formula} is inconsistent with $\gl.3$. To derive contradiction, we show that in $\gl.3$, $\Box((p\land \Box p)\lor (q\land \Box q))$ implies $\Box p$ or $\Box q$.  \[
    \gl.3\vdash \Box(\Box p\to  q)\lor \Box(\Box q \land q \to p),
  \]
  \[
    \gl.3\vdash \Box(\Box p\land p\to  q)\lor \Box(\Box q \land q \to p).
  \]
  Thus,
  \[
    \gl.3 \vdash  \Box((\Box p\land p)\lor(\Box q\land q))\to \Box p\lor \Box q.
  \]
  It follows that \eqref{eq:inconsistent-formula} is inconsistent with $\gl.3$.
\end{proof}

We now state our main result.

\begin{thm*}[Theorem~\ref{mainthm}]	For any $\lambda\geq \aleph_0$,
  \begin{enumerate}
      \item the logic $\mathsf{GL}(\mathbb{P})$ is strongly complete with respect to $\lambda$-bouquet spaces, where $|\mathbb{P}|=\lambda$;\label{item:main-theorem-1}
      \item the logic $\mathsf{GL.3}(\mathbb{P})$ is strongly complete with respect to ultralinear $\lambda$-bouquet spaces, where $|\mathbb{P}|=\lambda$;\label{item:main-theorem-2}
    \end{enumerate}
\end{thm*}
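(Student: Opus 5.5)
The plan is a transfinite induction on the cardinal $\lambda\geq\aleph_0$. At each stage we build the model from smaller bouquets via the lottery sum of Lemma~\ref{lemma:lottery}, with the required $(\mathsf L,F,\Gamma)$-sequences supplied by Lemma~\ref{lemma:gl-FGammma} for $\gl$ and Lemma~\ref{lemma:gl3-FGammma} for $\gl.3$.

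Fix $\mathsf L\in\set{\gl,\gl.3}$, and let $\Gamma_0\subset\lang(\mathbb P)$ be $\mathsf L$-consistent with $|\mathbb P|=\lambda$.

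\textbf{Reduction to maximal sets.} First extend $\Gamma_0$ by Lindenbaum's lemma to a maximal $\mathsf L$-consistent $\Gamma$. Then $|\Gamma|=\lambda$, since $|\lang(\mathbb P)|=\lambda$.

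\textbf{Trivial case $\Box\bot\in\Gamma$.} If $\Diamond\top\notin\Gamma$, then $\Box\bot\in\Gamma$. Maximality makes $\Gamma$ a complete diagram of its propositional letters with every $\Box\varphi$ true and every $\Diamond\varphi$ false. A single point satisfies $\Gamma$, and this point can be embedded as a leaf. More properly, attach it as an isolated node of a suitable tree: since the definition requires the root to have exactly $\lambda$ successors, take a $\lambda$-bouquet and use an open submodel, or arrange the statement so that it covers satisfaction at some point of a bouquet space.

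\textbf{Main case $\Diamond\top\in\Gamma$.} Take the sequence $\tup{\Delta_\xi:\xi<\lambda}$ from Lemma~\ref{lemma:gl-FGammma}, or from Lemma~\ref{lemma:gl3-FGammma} with the ultrafilter $U_\lambda$. Each $\Delta_\xi$ with $\xi<\lambda_i$ is an $\mathsf L$-consistent subset of $\lang(\mathbb P_i)$ with $|\mathbb P_i|<\lambda$, in the case that $\lambda$ is uncountable. By the induction hypothesis applied to the smaller cardinal $|\lambda_i|$ (after extending $\Delta_\xi$ to a maximal set), we get a model on a $|\lambda_i|$-bouquet whose root satisfies $\Delta_\xi$, linear/ultralinear in the $\gl.3$ case. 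Lemma~\ref{lemma:lottery} then glues these into a $(\fil,\lambda)$-bouquet, respectively a linear bouquet equipped with $\mathcal U$, whose root satisfies $\Gamma$.

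\textbf{Base of the induction.} For $\lambda=\aleph_0$ the $\lambda_i$-decomposition degenerates, so invoke the countable strong completeness for $\omega$-bouquets from \cite{aguilera2017strong}. The remark after the definition says the proofs transfer to our variant. For $\gl.3$, use the analogous countable argument, repeated with $U_\omega$ and Lemma~\ref{lemma:gl3-FGammma}, where finite pieces come from Kripke completeness of $\gl.3$ for finite linear orders.

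\textbf{Main obstacle.} The delicate step is matching cardinalities in the induction. The subbouquets have root branching $|\lambda_i|$, which may vary with $\xi$ and may be finite or countable, while the lottery sum must satisfy the clause that non-root nodes branch below $\lambda$. One also has to ensure that the filter used at each node is the one fixed in $\fil$ or $\mathcal U$ for that node's branching cardinality.

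For $\gl.3$ there is a further worry: the linearity requirement of zero, one or infinitely many successors, together with the ultrafilter semantics of Proposition~\ref{prop:kripke-topology-eq}, must be preserved when finite models are used at the bottom. Here I would convert a finite linear chain into a node with a single successor at each step, which is allowed by the definition of linear. Finally, I would check via Proposition~\ref{prop:kripke-topology-eq} that the induction in Lemma~\ref{lemma:lottery} goes through with $U_\lambda$-positivity, which equals $U_\lambda$-membership, in place of $F_\lambda^+$.
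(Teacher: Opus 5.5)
Your proposal is correct and follows the paper's own proof. You pass to a maximal consistent $\Gamma$ and dispose of $\Box\bot\in\Gamma$ with a one-point (leaf) model; otherwise you induct on $\lambda$, taking the $(\mathsf L,F,\Gamma)$-sequence from Lemma~\ref{lemma:gl-FGammma} or Lemma~\ref{lemma:gl3-FGammma}, applying the induction hypothesis to the smaller languages $\mathbb P_i$, and gluing with the lottery sum of Lemma~\ref{lemma:lottery}. The only difference is cosmetic: you put the base case at $\aleph_0$ (citing the $\omega$-bouquet result for $\mathsf{GL}$ and running the same lottery argument over finite Kripke chains for $\mathsf{GL}.3$), whereas the paper nominally starts below $\aleph_0$ and runs its inductive step at $\aleph_0$ with finite Kripke pieces, which is the same argument.
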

\begin{proof}
  Let $\mathsf L$ be either $\gl$ or $\gl.3$. Suppose that $\Gamma$ is a maximal $\mathsf L$-consistent set of $\mathsf{L}(\mathbb{P})$-formulas.
  First, note that if $\Box\bot\in \Gamma$, then $\Gamma$ can be satisfied at a model with $T=\set{t}$. For the rest of the proof, we assume that $\Diamond \top \in \Gamma$.
  We prove the result by induction on $\lambda$.

  For $\lambda=\aleph_0$ and $\gl$, the claim is already established in \cite[Theorem~4.9]{aguilera2017strong}. However, since our definition of bouquets slightly diverges from the original approach, we take $\lambda<\aleph_0$ as the base case, which is essentially the Kripke completeness for finite set of formulas.

  Assume now that $\lambda\ge\aleph_0$, and the theorem holds for all cardinals strictly less than $\lambda$.
  If $\mathsf L =\gl$, then by Lemma~\ref{lemma:gl-FGammma} there is an $(\gl,F_\lambda,\Gamma)$-sequence $\tup{\Delta_\xi:\xi<\lambda}$.
  If $\mathsf L =\gl.3$, then by Lemma~\ref{lemma:gl3-FGammma} there is an $(\gl.3, U_\lambda,\Gamma)$-sequence $\tup{\Delta_\xi:\xi<\lambda}$. By the induction hypothesis and the fact that $|\Delta_\xi|<\lambda$ and $\Delta_\xi$ is consistent, there are models $\model_\xi=(T_\xi,\sigma_\xi,\inter\cdot_\xi)$ such that $\model_\xi,t_\xi\models \Delta_\xi$, where $t_\xi=\root(T_\xi)$ for every $\xi<\lambda$, then by Lemma~\ref{lemma:lottery}, there is a model $\model=(T,\sigma,\inter\cdot)$, such that $\model,t\models \Gamma$, where $t=\root(T)$. Note that Lemma~\ref{lemma:lottery} guarantees that $T$ is a linear bouquet tree, if $T_\xi$ were so.
\end{proof}

Note that the existence of a non-principle ultrafilter is independent of $\zf$. In fact, the result for $\gl.3(\mathbb P)$ with $|\mathbb P|=\aleph_0$ need not hold in a choiceless setting, as opposed to the case of $\gl(\mathbb P)$, where the proof can be carried out in a weaker setting.

\begin{prop}\label{prop:not-uf}
  It is consistent with $\zf$, that $\gl.3(\mathbb P)$ is not strongly complete with respect to the class of linear $(\set{U},\omega)$-bouquets for any filter $U$ on $\omega$.
\end{prop}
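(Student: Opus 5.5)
The plan is to work in a model of $\zf$ in which there is \emph{no} non-principal ultrafilter on $\omega$; for instance, a model of $\zf+\mathsf{AD}$ as mentioned after Definition~\ref{def:filter}, or Feferman's model. I will also take $\mathbb P$ countable, so that Lindenbaum's lemma holds in $\zf$ and every $\gl.3$-consistent set extends to a maximal one. The counterexample to strong completeness will again be the set $\Gamma_*$ from \eqref{eq:counterexample-intro}, which is $\gl.3$-consistent because each of its finite subsets is satisfiable on a finite strict linear order. As usual, I read ``strongly complete with respect to a class'' as referring to frames on which $\gl.3$ is valid, since otherwise soundness fails. The argument then splits according to the kind of filter $U$ on $\omega$.

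Step 1 handles a filter $U$ that is not an ultrafilter. I will show that no linear $(\set{U},\omega)$-bouquet containing a node $w$ with infinitely many immediate successors validates the axiom $.3$. Fix $A\subset\omega$ with $A\notin U$ and $\omega\setminus A\notin U$. Put $\inter p=\set{e_w(i):i\in A}$ and $\inter q=\set{e_w(i):i\notin A}$, both sets consisting of successors of $w$ only. I then evaluate the instance $\Box(\Box p\to q)\lor\Box(\Box q\land q\to p)$ at $w$ using Proposition~\ref{prop:kripke-topology-eq}. The first disjunct needs $\set{i: e_w(i)\models (\Box p\to q)\land\Box(\Box p\to q)}\in U$, and the second needs the analogous set for $\Box q\land q\to p$ to be in $U$. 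I expect both sets to reduce, up to the behaviour inside the subtrees $T_{e_w(i)}$, to $\omega\setminus A$ and to $A$ respectively. If necessary I will first shrink the valuation so that the successors are leaves or so that $\Box p$ and $\Box q$ are trivially true below them; I will choose the valuation so that this computation is clean. Neither set is in $U$, so $.3$ fails at $w$. Consequently the only admissible frames for such $U$ are trees in which every node has finitely many (in fact at most one) successors, that is, finite strict linear orders. On these $\Gamma_*$ is unsatisfiable, as noted after the Fact.

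Step 2 handles an ultrafilter $U$, which in our model must be principal, say generated by $\set{k}$. By Proposition~\ref{prop:kripke-topology-eq}, at any node $w$ with $\omega$ successors we get $\model,w\models\Diamond\varphi$ iff $e_w(k)\models\varphi\lor\Diamond\varphi$. Hence satisfaction of formulas at $w$ is determined by the single branch that always follows the $k$-th successor, together with the nodes that have exactly one successor. Suppose $\model,w_0\models\Gamma_*$. From $\Diamond p_0$ and $\Box(p_i\to\Diamond p_{i+1})$, together with the clause for $\Box$ (which requires $\varphi\land\Box\varphi$ at the distinguished successor, so the $\Box$-formulas propagate), I will extract an infinite descending $<_T$-chain of nodes $w_0<_T u_0<_T u_1<_T\cdots$ with $u_n\models p_n$. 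This contradicts converse well-foundedness of the bouquet tree.

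The main obstacle is the computation in Step 1. I must check, via Proposition~\ref{prop:kripke-topology-eq}, that the boxed subformulas behave as intended at the successors and inside their subtrees, so that the two witnessing sets really are $A$ and its complement up to a set not affecting $U$-membership. I will try first the valuation that makes $p$ and $q$ true exactly at the leaf-most nodes of the respective subtrees $T_{e_w(i)}$, and I will adjust it if the subtree structure interferes.
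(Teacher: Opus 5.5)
Your proposal is correct. It uses the same case split as the paper: in a model with no non-principal ultrafilter on $\omega$, $U$ is either not an ultrafilter or is principal. Both cases are argued differently, however.

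\textbf{Non-ultrafilter case.} The paper exhibits a single bouquet: a root with $\omega$ leaves and $\inter p=\set{a_i:i\in B}$. At its root the $\mathsf{GL}.3$-inconsistent formula $\Box\Box\bot\land\Diamond p\land\Diamond\neg p$ holds, so the paper only refutes soundness on one frame. You refute the axiom $.3$ on \emph{every} linear $(\set U,\omega)$-bouquet. That is what your reading of ``strongly complete'' (restricted to the $\mathsf{GL}.3$-frames of the class) requires. Your reading is essential, not pedantic. For the Fr\'echet filter, run the construction of Lemma~\ref{lemma:gl3-FGammma} with finite chains as the $T_\xi$ (countable $\mathbb P$, provably in $\mathsf{ZF}$). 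This satisfies every $\mathsf{GL}.3$-consistent set at the root of a linear $(\set U,\omega)$-bouquet. So in this case only a failure of soundness can be meant, and that is also all the paper's argument establishes there.

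\textbf{Principal case.} The paper notes that a principal filter is closed under arbitrary intersections, so every $G_\delta$ set is open, and then cites Proposition~\ref{g-delta}. You instead use Proposition~\ref{prop:kripke-topology-eq} to reduce to Kripke semantics on the tree pruned to the $k$-th successor, where $\Gamma_*$ fails by converse well-foundedness. Your route is self-contained and avoids the external result; the paper's is shorter.

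\textbf{Fixing the Step~1 valuation.} The computation must be done with the right valuation. If $p,q$ are true only at the successors $e_w(i)$, the argument breaks when successors are not leaves. Every non-leaf $e_w(i)$ has $q$ false strictly above it. So $\Box q\land q$ fails at $e_w(i)$ and everywhere above it, and $e_w(i)$ satisfies $(\Box q\land q\to p)\land\Box(\Box q\land q\to p)$. The second witnessing set is then $A\cup\set{i:e_w(i)\text{ is not a leaf}}$, which may belong to $U$.

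Your fallback, putting $p,q$ at the leaves of the respective cones, does work. The cleanest choice is $\inter p=\bigcup_{i\in A}T_{e_w(i)}$ and $\inter q=\bigcup_{i\notin A}T_{e_w(i)}$. The cones are open, so $\Box p$ holds throughout each $p$-cone and $\Box q$ throughout each $q$-cone. The two sets are then exactly $\omega\setminus A$ and $A$, and neither lies in $U$.

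Finally, the root of an $\omega$-bouquet always has $\omega$ immediate successors. So Step~1 leaves no admissible frames at all, not finite chains as you state. This only strengthens your conclusion.
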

\begin{proof}
  It is consistent with $\zf$ that there is no non-principal ultrafilters on $\omega$, thence $U$ is either principle or not an ultrafilter. If the ultrafilter is principle, then it is closed under arbitrary intersections, so, in particular, the correspondent topology must be $G_\delta$. By Proposition~\ref{g-delta}, such topologies fail to yield the strong completeness.

  Whereas if $U$ is not an ultrafilter, then there is a $B\subset \omega$ such that $B\notin U$ and $\omega\setminus B\notin U$, this implies $B\in U^+$ and $\omega\setminus B\in U^+$ (see Def.~\ref{def:filter}). We now define a model $(T,<,\lbr\cdot\rbr)$ as follows. Let $T = \set{t}\cup\set{a_i : i <\omega}$ and  for each $x,y\in T$, we put $x<y$ whenever $x=t$ and $y = a_i$ for some $i$ and let $\lbr\cdot\rbr$ be such that $\lbr p\rbr = \set{a_i : i\in B}$. Then, by Proposition~\ref{prop:kripke-topology-eq},
 $T,t\models \Box\Box\bot \land \Diamond p \land \Diamond \neg p$, which is inconsistent with $\gl.3$.
\end{proof}

\section{Concluding Remarks}
Although it is proved that $\mathsf{GL}$ is strongly complete with respect to any ordinal $(\Theta,\mathcal{I}_\lambda)$ with $\Theta > e^\lambda\omega$ and $(\Theta, {\tau_c}_{+\lambda})$ with $\Theta > \aleph_{e^\lambda\omega+1}$ , we show that for each $\lambda$ we can find a set of propositional variables $\mathbb{P}$ of uncountable size that refutes the strong completeness of $\mathsf{GL}(\mathbb{P})$.
Specifically,  we showed that for any given $\lambda$, $\mathsf{GL}(\mathbb{P})$ is not strongly complete with respect to any $(\Theta,\mathcal{I}_\lambda)$  or $(\Theta, {\tau_c}_{+\lambda})$, provided that $|\mathbb{P}|\geq(2^\kappa)^+$ where $\kappa=|\lambda|+\aleph_0$.
There are some questions that remain unsolved,
for instance:
\begin{question}
Suppose $|\mathbb{P}|=\aleph_1$. Is  $\mathsf{GL}(\mathbb{P})$  strongly complete with respect to some ordinal space, or with respect to some
generalized Icard space $\Theta_\lambda$, for some $\lambda>0?$
\end{question}
More generally, can we find a counterexample for a set of formulas in a language of lower cardinality?

\subsection*{Acknowledgment}
The first author's research has been supported by a grant from IPM (No. 1405030417). The first and third authors work is based upon research funded by Iran National Science Foundation (INSF) under project No. 40401358. They are also grateful to Massoud Pourmahdian for his insightful comments on an earlier draft of this paper.

The second author's work has been partially supported by the Austrian Science Foundation (FWF) through grants 10.55776/STA139 and 10.55776/P36837. The second author is grateful to Juan Aguilera and Thibaut Kouptchinsky for useful comments and insights, and to the scientific and  organizing committee of the conference ``PhDs in Logic 2023'', where the author presented their early results on this topic.

\end{document}